\documentclass[12pt, reqno]{amsart}
\usepackage{amssymb}
\usepackage{amsthm}
\usepackage{mathtools}
\usepackage[normalem]{ulem}
\usepackage{amsmath, geometry}
\usepackage{listings}
\usepackage[utf8]{inputenc}
\usepackage[english]{babel}
\usepackage[dvipsnames]{xcolor}
\usepackage{amsfonts}
\usepackage{lipsum}

\usepackage{tikz}
\usetikzlibrary{cd}
\usepackage{mathrsfs}
\usetikzlibrary{arrows}
\usepackage{scrextend}
\usepackage[makeroom]{cancel}
\usepackage{enumitem}
\usepackage{float}% to use H for figures
\usepackage{graphicx} % Allows including images

\usepackage[backend=biber,
    style=numeric,
    maxbibnames=99,
    giveninits=true,
    url=false,        
    doi=false,        
    isbn=false,      
    eprint=true
    ]{biblatex}
\definecolor{qqffff}{rgb}{0,1,1}
\definecolor{ttffcc}{rgb}{0.2,1,0.8}
\definecolor{ududff}{rgb}{0.30196078431372547,0.30196078431372547,1}
\definecolor{qqqqff}{rgb}{0,0,1}
\definecolor{ffffff}{rgb}{1,1,1}
\definecolor{ffqqqq}{rgb}{1,0,0}
\definecolor{qqffqq}{rgb}{0,1,0}
\definecolor{cqcqcq}{rgb}{0.7529411764705882,0.7529411764705882,0.7529411764705882}
\definecolor{xfqqff}{rgb}{0.4980392156862745,0,1}
\definecolor{ccwwff}{rgb}{0.8,0.4,1}
\definecolor{zzttqq}{rgb}{0.6,0.2,0}
\definecolor{cczzff}{rgb}{0.8,0.4,1}
\theoremstyle{definition}
\newtheorem*{assumptions}{Assumptions}

\theoremstyle{plain}
\newtheorem{theorem}{Theorem}[section]
\newtheorem{lemma}[theorem]{Lemma}
\newtheorem{corollary}[theorem]{Corollary}
\newtheorem{proposition}[theorem]{Proposition}

\theoremstyle{definition}
\newtheorem{example}[theorem]{Example}
\newtheorem{fact}[theorem]{Fact}
\newtheorem{notation}[theorem]{Notation}
\newtheorem{definition}[theorem]{Definition}

\newcommand{\br}{\mbox{branch}}
\newcommand{\hei}{\mbox{height}}
\newcommand{\dist}{\text{dist}}
\newcommand{\pth}{\mbox{path}}

\newcommand\tcr[1]{\textcolor{red}{#1}}
\newcommand\tcb[1]{\textcolor{blue}{#1}}

\newcommand{\hh}{\mbox{height}}

\newcommand{\R}{\mathcal{R}}
\newcommand{\B}{\mathcal{B}}

\newcommand\Set[2]{\left\lbrace #1 \mid #2 \right\rbrace}
\newcommand\set[1]{\left\lbrace #1 \right\rbrace}

\newcommand{\calD}{\mathcal{D}}

\allowdisplaybreaks

\begin{document}

\dedicatory{To the memory of Dr. Renu C. Laskar}

\title[Characterization of Well-Totally Dominated Trees]{Characterization of Well-Totally Dominated Trees}

\author[Lim, et al.]{Jounglag Lim}
\address{School of mathematical and statistical sciences, Clemson University, Clemson, SC 29634}
\email{joungll@clemson.edu}

\author[]{James Gossell}
\address{Department of mathematics and statistics, University of Alaska, Fairbanks, AK 99775}
\email{jegossell@alaska.edu}

\author[]{Keri Ann Sather-Wagstaff}
\address{Framingham State University, Framingham, MA 01701}
\email{ksatherwagstaff@framingham.edu}

\author[]{Devin Adams}
%\address{}
\email{devin.adams@maine.edu}

\author[]{Suzanna Castro-Tarabulsi}
\address{Mathematical Sciences, George Mason University,  Fairfax, VA 22030}
\email{starabul@gmu.edu}

\author[]{Aayahna Herbert}
\address{School of mathematical and statistical sciences, Clemson University, Clemson, SC 29634}
\email{aayahna.herbert@gmail.com}

\author[]{Vi Anh Nguyen}
\address{Department of Mathematics, University of Illinois Urbana-Champaign, Urbana, IL 61801}
\email{vianhan2@illinois.edu}

\author[]{Yifan Qian}
%\address{}
\email{qian.410@buckeyemail.osu.edu}

\author[]{Matthew Schaller}
\address{School of mathematical and statistical sciences, Clemson University, Clemson, SC 29634}
\email{mwschaller@gmail.com}

\author[]{Zoe Zhou}
\address{Columbia university irving medical center, Columbia University, New York, NY 10032}
\email{zz2992@cumc.columbia.edu}

\author[]{Yuyang Zhuo}
\address{Department of mathematics, The Ohio State University, Columbus, OH 43210}
\email{huo.48@buckeyemail.osu.edu}

\begin{abstract}
    Let $G$ be a graph with no isolated vertices.
    A set of vertices $S$ is a total dominating set (TDS) if every vertex in $G$ is adjacent to at least one vertex in $S$.
    We say $G$ is well-totally dominated (WTD) if every minimal TDS has the same size.
    In this paper, we present two characterizations of well-totally dominated trees, one being descriptive and the other being constructive.
    In particular, our characterizations imply that it takes only polynomial time to verify whether a given tree is WTD.
\end{abstract}

\maketitle
\setcounter{tocdepth}{1}

\section{Introduction}

Let $G = (V,E)$ be a graph with no isolated vertices.
For $v \in V$, the \emph{open neighborhood} of $v$ is the set of vertices that are adjacent to $v$ in $G$.
In other words,
$N_G(v) = N(v) = \Set{u \in V}{ \set{u,v} \in E}$.
For $S \subseteq V$, we define the open neighborhood of $S$ to be the set
$N_G(S) = N(S) = \bigcup_{v \in S}N(v)$.
A subset $S \subseteq V$ is called a \emph{total dominating set} (TDS) in $G$ if $N(S) = V$.
We say that $S$ is a minimal TDS if no proper subset of $S$ is also a TDS.
Total domination in graphs has been studied extensively in the literature \cite{TDinG}.
In particular, computing the size of the smallest TDS (called the \emph{total domination number}, denoted $\gamma_t(G)$) is an active research area; similarly, computing the size of the largest minimal TDS in $G$ (called the \emph{upper total domination number}, denoted $\Gamma_t(G)$) is of interest as well.
It is well-known that computing $\gamma_t(G)$ and $\Gamma_t(G)$ are NP-hard problems \cite{pfaff1984np,MR2043923}.
A way to get around this is to consider graphs whose minimal TDS are also minimum TDS (with respect to size); i.e., $\gamma_t(G) = \Gamma_t(G)$.
Such graphs are called \emph{well-totally dominated} (WTD).

A set $S \subseteq V$ is a \emph{dominating set} if $N(S)\supseteq V\setminus S$ (and a minimal dominating set is defined similarly to a minimal TDS), and $G$ is \emph{well-dominated} if every minimal dominating set has the same size.
While well-dominated graphs have been actively studied (for instance, see \cite{MR4197372,MR1220599,MR3648208,MR4699495}), not many structural theorems for WTD graphs are known.
The study of WTD graphs began with the work of Bert and Douglas \cite{MR1605080}, where WTD cycles and paths are characterized and several constructions of WTD trees are given.
In \cite{finbow2009total}, the authors study WTD graphs via the composition and decomposition of graphs, showing that every WTD tree can be constructed from a family of three small trees.
Later, in \cite{MR4333882}, the authors show that any WTD graph with a bounded total domination number can be recognized in polynomial time, and focus on WTD graphs with a total domination number of $2$.
To our knowledge, these are the only papers containing structural results of WTD graphs.

In this paper, we give two characterizations of WTD trees.
Our characterizations are different from the one in \cite{finbow2009total}.
The characterization in \cite{finbow2009total} finds a path in a given WTD tree with a special property, which is used to show which substructure is can appear in the decomposition process.
On the other hand, we detect the WTD-ness of a tree by 2-coloring the tree (with red and blue); see Theorem~\ref{thm. char. of WTD trees}.

Essentially, $T$ has two noteworthy ``interior'' subgraphs, $T_\R$ and $T_\B$, which are induced subforests that arise from a red-blue coloring of $T$. 
We show that $T$ is WTD if and only if both $T_\R$ and $T_\B$ are WTD (see Corollary~\ref{cor. T WTD <-> TBTR WTD}).
The connected components of $T_\R$ and $T_\B$ have a property that we call ``balanced.''
The general properties of balanced trees are the focus of Section~\ref{sec. Balanced trees}, and their WTD-ness is the topic of Section~\ref{sec. des. char. of WTD bal. trees}. 
The main result is our descriptive characterization of them in Theorem~\ref{thm. char. of WTD delt. trees}.
The second characterization of WTD trees is constructive, allowing us to build WTD balanced trees.
It is the main result of Section~\ref{sec. const. WTD bal. trees}; see Theorem~\ref{thm. constructing hh = 3 delt. trees}.
In Section~\ref{sec. dom selector}, we define a domination selector, which is used to establish the minimality of a TDS in the proofs of subsequent sections.

\begin{assumptions}
Throughout, we assume that $G$ is a finite, simple graph.
If the graph we consider is clear in context, then we write $V := V(G)$ and $E := E(G)$.
Since $G$ is simple, we denote each edge $e \in E$ as $uv=vu$ where $u,v \in V$ are the distinct vertices incident to $e$.
\end{assumptions}

\noindent\textbf{Acknowledgement.} We are gratful to Clemson University's School of Mathematical and Statistical Sciences for financial support for the 2020 summer REU COURAGE (Clemson Online Research on Algebra and Graphs Expanded) where this research started.

\section{Domination selector}\label{sec. dom selector}

The goal of this section is to define the notion of a \emph{domination selector}, which is a useful tool for demonstrating the minimality of a given set of vertices. 
The main result of this section is Lemma~\ref{lem. D(v) not in intersection -> union is minimal}, which we use subsequently in several key places.
We begin with some notations and definitions.

\begin{notation}
    Recall that a forest is a finite disjoint union of trees.
    Let $x,y \in V$.
    A \emph{path} from $x$ to $y$ is a sequence of distinct vertices $(v_0,\dots,v_d)$ where $d \in \mathbb{N}$, $v_0 = x$, $v_d = y$, and $v_iv_{i+1} \in E$ for all $i = 0,\dots,d-1$.
    The \emph{length} of a path $(v_0,\dots,v_d)$ is $d$ also written as $d =: |(v_0,\dots,v_d)|$ , and the \emph{distance} from $x$ to $y$ is the length of the shortest path from $x$ to $y$, denoted $\dist(x,y)$.
    In case the given graph is a tree (or a forest), denote the (shortest) path from $x$ to $y$ by $\pth(x,y)$.
    If a vertex $z \in V$ equals to one of the $v_j$ in $\mbox{path}(x,y)$, we write $z \in \pth(x,y)$.
\end{notation}

\begin{definition}
A vertex $v \in V$ is \emph{isolated} if $\deg(v) = 0$, it is a \emph{leaf} if $\deg(v) = 1$, and it is a \emph{support vertex} if it is adjacent to a leaf.
Every vertex adjacent to a support vertex is called a \emph{supported vertex}.
\end{definition}

\begin{definition}
Let $G$ be a graph with at least one leaf.
The \emph{height} of a vertex $v \in V$ is given by
$$
\hh(v) := \min\Set{\dist(v,\ell)}{\ell \mbox{ is a leaf in } G}.
$$ 
If $v$ is an isolated vertex, then we set $\hh(v) = 0$ as a convention. 
We denote 
$$
V_k = V_k(G) := \{v \in V\ |\  \hh(v) = k\}
$$ 
and the \emph{height} of $G$ is the integer 
$$
\hh(G) := \max\Set{k \in \mathbb{N}}{V_k \neq \emptyset}.
$$
For instance, every leaf of a graph has height 0, and every non-leaf support vertex has height~1.
\end{definition}

\begin{lemma}
\label{lem. singleLeaf in a min TD set}
Let $G$ be a graph with a support vertex $s$.
Then for any minimal TDS $D$, we have $|D \cap N(s) \cap V_0(G)| \leq 1$.
\end{lemma}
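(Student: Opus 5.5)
We argue by contradiction. Suppose $D$ is a minimal TDS and there are two distinct vertices $\ell_1,\ell_2 \in D \cap N(s) \cap V_0(G)$. Each $\ell_i$ has height $0$, and since $\ell_i$ is adjacent to $s$ it is not isolated, so each $\ell_i$ is a leaf with $N(\ell_i)=\{s\}$. We will show that $D' := D\setminus\{\ell_2\}$ is still a TDS. This contradicts the minimality of $D$, since $D'$ is a proper subset of $D$.

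To check that $N(D')=V$, fix a vertex $v\in V$. Because $D$ is a TDS, $v$ has some neighbor $u \in D$. If we can choose $u \neq \ell_2$, then $u\in D'$ and $v$ is totally dominated by $D'$. Otherwise the only neighbor of $v$ in $D$ is $\ell_2$. Since $N(\ell_2)=\{s\}$, this forces $v=s$. But $s$ is also adjacent to $\ell_1$, and $\ell_1\in D'$ because $\ell_1\neq \ell_2$. So $s\in N(D')$ as well, and therefore $N(D')=V$.

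The only point that needs care is the first step: a vertex of height $0$ adjacent to $s$ must be a leaf whose unique neighbor is $s$. This is where the convention that isolated vertices have height $0$ could interfere. An isolated vertex has no neighbors, so it cannot lie in $N(s)$, and the convention therefore causes no problem. The rest of the argument is routine.
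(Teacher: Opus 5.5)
Your proof is correct and is the same argument as the paper's: removing one of the two leaves from $D$ still leaves a TDS, because the other leaf keeps $s$ dominated, and this contradicts minimality. Your explicit check that a height-$0$ neighbor of $s$ is a leaf with $N(\ell)=\{s\}$ (not an isolated vertex) is a detail the paper leaves implicit.
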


\begin{proof}
Assume not. 
Then there are two vertices $x,y \in D \cap N(s) \cap V_0(G)$.
Then $N(D \setminus \{x\}) = V(G)$ as $s \in N(y)$, which contradicts the minimality of $D$. 
\end{proof}

Lemma~\ref{lem. LeafAdding} is the first application of Lemma~\ref{lem. singleLeaf in a min TD set}.
It states that given a graph $G$, adding a leaf to a support vertex or deleting a leaf adjacent to a support vertex with more than one leaf adjacent  does not change the WTD-ness of $G$.

\begin{lemma}
\label{lem. LeafAdding}
Let $G$ be a connected graph with at least one leaf.
Let $G'$ be a graph obtained from $G$ by attaching any number of leaves to its suppport vertices.
Then $G$ is WTD if and only if $G'$ is WTD.
\end{lemma}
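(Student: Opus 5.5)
The plan is to reduce to attaching a single leaf and then induct on the number of attached leaves. Every intermediate graph is still connected and still has a leaf, and every support vertex of $G$ remains a support vertex after a leaf is added. So it suffices to treat the case where $G'$ is obtained from $G$ by adding one new vertex $\ell'$ adjacent only to a support vertex $s$ of $G$. Fix a leaf $\ell$ of $G$ adjacent to $s$; it remains a leaf in $G'$. In the degenerate case $G = K_2$ we just take $\ell$ to be the other vertex. The goal is to show that the two graphs have exactly the same set of sizes of minimal TDSs. Then $\gamma_t(G)=\Gamma_t(G)$ if and only if $\gamma_t(G')=\Gamma_t(G')$.

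\textbf{Key observation.} In both $G$ and $G'$, the vertex $\ell$ has $s$ as its only neighbor. Hence every TDS of either graph contains $s$.

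\textbf{Step 1: minimal TDSs coincide on subsets of $V(G)$.} For $D \subseteq V(G)$ with $s \in D$ we have $N_{G'}(D) = N_G(D) \cup \{\ell'\}$. Therefore $D$ is a TDS of $G$ if and only if it is a TDS of $G'$. For minimality, consider removing a vertex $x \in D$.
\begin{itemize}
\item If $x = s$, then $D\setminus\{s\}$ fails to dominate $\ell$ in both graphs.
\item If $x \neq s$, then $s \in D \setminus\{x\}$, and by the same identity $D\setminus\{x\}$ is a TDS of $G$ if and only if it is a TDS of $G'$.
\end{itemize}
So a subset of $V(G)$ is a minimal TDS of $G$ if and only if it is a minimal TDS of $G'$.

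\textbf{Step 2: minimal TDSs of $G'$ containing $\ell'$.} Let $D'$ be a minimal TDS of $G'$ with $\ell' \in D'$. By Lemma~\ref{lem. singleLeaf in a min TD set}, $\ell \notin D'$. Since $\ell$ and $\ell'$ are twins with $N(\ell)=N(\ell')=\{s\}$, swapping them preserves neighborhoods of subsets. Hence $(D' \setminus \{\ell'\}) \cup \{\ell\}$ is a minimal TDS of $G'$ of the same size, and it lies inside $V(G)$. By Step~1 it is then a minimal TDS of $G$.

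Combining the two steps, the sizes of minimal TDSs of $G'$ are exactly those of $G$, which completes the inductive step. The only real care needed is the bookkeeping that $s$ lies in every TDS, which makes the neighborhood identity valid in the minimality check. The other subtle point is the $K_2$ case, where $s$ itself was a leaf; the argument still goes through because it uses only $\ell$ and not the leaf status of $s$.
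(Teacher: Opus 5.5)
Your single-leaf step is the paper's argument. Every TDS contains $s$, so minimal TDSs of $G$ stay minimal in $G'$. A minimal TDS of $G'$ containing $\ell'$ becomes one of $G$ when you trade $\ell'$ for its twin $\ell$. Your Step 1 proves both directions for subsets of $V(G)$, which is slightly tighter than the paper's appeal to ``the preceding paragraph''.

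\textbf{The gap is in the reduction to a single leaf.} Your sentence ``every support vertex of $G$ remains a support vertex after a leaf is added'' is false for $G=K_2$, the very case you single out. Let $V(G)=\{a,b\}$ and first attach $\ell'_a$ to $a$. The intermediate graph is the path $b\,a\,\ell'_a$, in which $b$ is a leaf whose only neighbor is not a leaf. So $b$ is no longer a support vertex.

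If $G'$ also has leaves attached at $b$, the next inductive step adds a leaf at a non-support vertex. There is then no leaf adjacent to $b$ to serve as $\ell$, and the conclusion of your Step 1 actually fails. For instance, $\{a,\ell'_a\}$ is a minimal TDS of $b\,a\,\ell'_a$, but it no longer dominates once $\ell'_b$ is attached to $b$. No ordering of the attachments avoids this.

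\textbf{The repair is short.}
\begin{itemize}
\item If $G\neq K_2$, no support vertex of the connected graph $G$ is a leaf. Attaching leaves at support vertices therefore never changes the degree of an old leaf, and your invariant genuinely holds.
\item If $G=K_2$, every graph obtainable is a star or a double star. In each of these every minimal TDS has size $2$; in a double star the two centers form the unique minimal TDS. So $G$ and $G'$ are both WTD.
\end{itemize}
The paper's one-line reduction (``it suffices to prove the claim for $G$ and $H$'') passes over this same case.
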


\begin{proof}
Let $s \in V(G)$ be a support vertex adjacent to a leaf $\ell \in V(G)$.
Let $H$ be a graph with $V(H) = V(G) \cup \set{\ell'}$ and $E(H) = E(G) \cup \set{s\ell'}$.
It suffices to prove the claim for $G$ and $H$.

Let $S \subseteq V(G)$ be a minimal TDS of $G$.
Since $N_G(S) = V(G) \ni \ell$, we have $s \in S$ as $s$ is the only vertex adjacent to $\ell$.
Now consider $S\subseteq H$.
Since $s \in S$, we have $N_H(S) = V(H)$.
To show that $S$ is a minimal TDS of $H$, assume by way of contradiction that there is a proper subset $S' \subsetneq S$ with $N_H(S') = V(H)$.
Then the set $S' \subseteq V(G)$ has $N_G(S') = V(G)$, contradicting the minimality of $S$ in $G$.
Hence $S$ is a minimal TDS of $H$.

Next, let $S \subseteq V(H)$ be a minimal TDS of $H$.
We show that there is a minimal TDS of $G$ of the same size as $S$.
If $\ell' \not\in S$, then $S \subseteq V(G)$ is also a minimal TDS of $G$ by the preceding paragraph.
So suppose that $\ell' \in S$.
Then by Lemma~\ref{lem. singleLeaf in a min TD set}, $\ell'$ is the only leaf in $S$ adjacent to $s$.
By the minimality of $S$, we have $N_H(S\setminus \set{\ell'}) = V(H)\setminus \set{s}$.
Hence the set $S' := (S \setminus \set{\ell'}) \cup \set{\ell}$ is a minimal TDS of $H$.
Since $S' \subseteq V(G)$, the set $S' \subseteq V(G)$ has $N_G(S') = V(G)$.
To show that $S'$ is minimal in $G$, asumme by way of contradiction that there is a proper subset $S'' \subsetneq S'$ with $N_G(S'') = V(G)$.
Then by the previous argument, $S''$ is a minimal TDS of $H$ contained in $S'$, contradicting the minimality of $S'$ in $H$.
Thus $S'$ is a minimal TDS of $G$ with $|S'| = |S|$.

So far, we have shown that any minimal TDS of $G$ is a minimal TDS of $H$, and for any minimal TDS of $H$, there is a minimal TDS of $G$ of the same size.
This implies that $G$ is WTD if and only if $H$ is WTD.
\end{proof}

We next extend the definition of minimality for an arbitrary set with respect to open neighborhoods.

\begin{definition}\label{def. minimal}
A subset $D \subseteq V(G)$ is \emph{minimal} (with respect to open neighborhoods) if there is no proper subset $D' \subsetneq D$, such that $N(D') = N(D)$.
\end{definition}

From now on, a set being minimal will always refer to Definition~\ref{def. minimal}.
The following lemma gives an equivalent definition of a minimal set.

\begin{lemma}
\label{lem. minimalEquiv}
Let $D \subseteq V$. Then $D$ is minimal if and only if for every vertex $v \in D$, there is a vertex $u \in N(D)$ such that  $N(u) \cap D = \{v\}$.
\end{lemma}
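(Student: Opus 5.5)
The plan is to prove both directions of Lemma~\ref{lem. minimalEquiv} by contraposition. The key reformulation is this: for $v \in D$, removing $v$ shrinks the neighborhood exactly when some vertex is dominated by $v$ alone. Precisely,
\[
N(D\setminus\{v\}) \subsetneq N(D) \iff \exists\, u \in N(D) \text{ with } N(u)\cap D = \{v\}.
\]
To verify this, note that $u \in N(D)$ fails to lie in $N(D\setminus\{v\})$ if and only if $u$ has no neighbor in $D\setminus\{v\}$. Since $u$ does have some neighbor in $D$, this happens exactly when $N(u)\cap D = \{v\}$. The inclusion $N(D\setminus\{v\})\subseteq N(D)$ is automatic.

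\emph{Forward direction.} Suppose $D$ is minimal and $v \in D$. The set $D\setminus\{v\}$ is a proper subset of $D$, so by minimality $N(D\setminus\{v\}) \neq N(D)$. Combined with the automatic inclusion, the displayed equivalence gives the required $u$.

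\emph{Reverse direction.} Assume the stated condition holds for every $v\in D$, and suppose toward a contradiction that $D' \subsetneq D$ satisfies $N(D') = N(D)$. Pick $v \in D\setminus D'$ and take $u \in N(D)$ with $N(u)\cap D=\{v\}$. Since $u \in N(D) = N(D')$, the vertex $u$ has a neighbor $w\in D'\subseteq D$. Then $w \in N(u)\cap D = \{v\}$, so $w=v\in D'$, a contradiction.

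I do not expect any real obstacle. The only subtle point is that minimality quantifies over all proper subsets, not just the one-vertex deletions $D\setminus\{v\}$. The reverse direction handles this by working with an arbitrary $D'$ directly. Alternatively, one can note that $N$ is monotone, so $D'\subseteq D\setminus\{v\}\subseteq D$ and $N(D')=N(D)$ together force $N(D\setminus\{v\})=N(D)$.
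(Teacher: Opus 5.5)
Your proof is correct and follows essentially the same route as the paper: the forward direction applies minimality to the single deletion $D\setminus\{v\}$, and the reverse direction derives a contradiction from a witness $u\in N(D)$ with $N(u)\cap D=\{v\}$. Your reverse direction works directly with an arbitrary proper subset $D'$, which is slightly more careful than the paper; the paper passes from ``$D$ is not minimal'' straight to $N(D\setminus\{c\})=N(D)$ for some $c$ without spelling out the monotonicity of $N$.
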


\begin{proof}
First, suppose that $D$ is minimal.
By way of contradiction, assume that there exists a vertex $c \in D$ such that for all $x \in N(c)$ we have $(N(x) \cap D) \setminus \{c\} \neq \emptyset$.
We claim that $N(D\setminus \set{c}) = N(D)$, which contradicts the minimality of $D$.
The containment $N(D\setminus \set{c}) \subseteq N(D)$ is by definition. 
For the other containment, let $v \in N(D)$.
Then there exists $u \in D$ such that $vu \in E$.
If $u \neq c$, then $u \in D\setminus \set{c}$, hence $v \in N(D \setminus \set{c})$.
So suppose that $u = c$.
Then we have $(N(v) \cap D)\setminus \set{c} \neq \emptyset$.
So, there exists $c' \in D$ with $vc' \in E$ and $c' \neq c$.
Thus $v \in N(D \setminus \set{c})$.

Now suppose that for every vertex $v \in D$, there is a vertex $u \in N(D)$ such that $N(u) \cap D = \{v\}$.
Assume for a contradiction that $D$ is not minimal.
Hence there is $c \in D$ such that $N(D \setminus \{c\}) = N(D)$.
By hypothesis, there is some vertex $y \in N(D)$ such that $N(y) \cap D = \{c\}$.
Hence $N(D\setminus \{c\}) \not\ni y$, a contradiction.
\end{proof}

Using Lemma~\ref{lem. minimalEquiv}, we can now define domination selectors.

\begin{definition}\label{def. dom. selector}
Let $D \subseteq V(G)$ be a minimal set. 
Let $\mathcal{D}:D\to N(D)$ be a function such that for all $v \in D$, we have
$N(\mathcal{D}(v)) \cap D = \set{v}$.
We call such a function a \emph{domination selector} of $D$ in $G$.
\end{definition} 

Domination selectors are well defined, injective functions by Lemma \ref{lem. minimalEquiv}.
Note that domination selectors of a minimal set might not be unique.
The following is a corollary of Lemma~\ref{lem. minimalEquiv} and Definition~\ref{def. dom. selector}.

\begin{corollary}\label{cor. dom selector <-> minimal}
Let $D \subseteq V$ such that $N(D) = V$. 
Then $D$ is a minimal TDS if and only if $D$ has a domination selector.
\end{corollary}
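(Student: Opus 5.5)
The plan is to note that, since $N(D)=V$, being a minimal TDS is the same as being minimal in the sense of Definition~\ref{def. minimal}, and then to apply Lemma~\ref{lem. minimalEquiv}. Concretely, I first check that for a set $D$ with $N(D)=V$, ``no proper subset of $D$ is a TDS'' is equivalent to ``no proper subset $D'\subsetneq D$ has $N(D')=N(D)$.'' This holds because a subset $D'$ is a TDS exactly when $N(D')=V=N(D)$.

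For the forward direction, suppose $D$ is a minimal TDS. By the observation above, $D$ is minimal with respect to open neighborhoods. Lemma~\ref{lem. minimalEquiv} then gives, for each $v\in D$, some $u\in N(D)$ with $N(u)\cap D=\{v\}$. I define $\mathcal{D}(v)$ to be one such $u$, making one choice for each $v$ (finitely many, since $G$ is finite). This $\mathcal{D}$ is a domination selector by Definition~\ref{def. dom. selector}.

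For the converse, suppose $D$ has a domination selector $\mathcal{D}$. Then for each $v\in D$, the vertex $u=\mathcal{D}(v)\in N(D)$ satisfies $N(u)\cap D=\{v\}$. By Lemma~\ref{lem. minimalEquiv}, $D$ is minimal. Since $N(D)=V$, $D$ is a TDS, and by the equivalence above it is a minimal TDS.

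There is no real obstacle here. The only care needed is the routine identification of the two notions of minimality under the hypothesis $N(D)=V$. Note also that Definition~\ref{def. dom. selector} presupposes that $D$ is minimal; in the converse direction we only use the selector's defining property, so we do not assume minimality in advance.
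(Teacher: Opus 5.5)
Your proposal is correct and matches the paper's approach. The paper records this as an immediate consequence of Lemma~\ref{lem. minimalEquiv} and Definition~\ref{def. dom. selector}, and your identification of ``minimal TDS'' with ``minimal'' under $N(D)=V$ is exactly the implicit step. Your remark that the converse uses only the selector's defining property, since the definition formally presupposes minimality, is a fair clarification.
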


\begin{example}
\label{domination selector example}
Consider the following graph $Y$.
%(Figure~\ref{Yifan Graph}).
\begin{figure}[ht]
\centering
\includegraphics{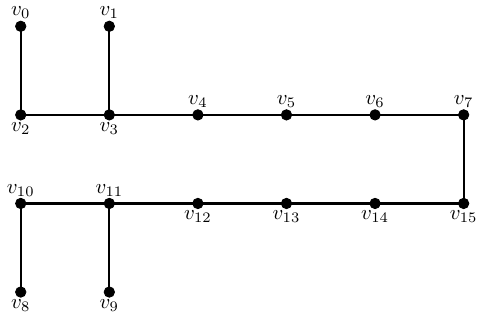}
%\caption{Graph $Y$, the Yifan graph}
\label{Yifan Graph}
\end{figure}

\noindent One can check that $D := \{v_2, v_3, v_5, v_6, v_{10}, v_{11}, v_{14}, v_{15}
\}$ is a TDS in $Y$, and that the function $\mathcal{D}: D \rightarrow V(Y)$ given by
$$
\mathcal{D}(x) = \begin{cases}
v_0, & x = v_2\\
v_1, & x = v_3\\
v_6, & x = v_5\\
v_5, & x = v_6\\
v_8, & x = v_{10}\\
v_9, & x = v_{11}\\
v_{15}, & x = v_{14}\\
v_{14}, & x = v_{15}\ .\\
\end{cases}
$$
is a domination selector of $D$ in $Y$.
Hence by Corollary~\ref{cor. dom selector <-> minimal}, $D$ is minimal.
\end{example}

Another way of constructing a minimal set is described in the following two results.

\begin{lemma}\label{lem. D(v) not in intersection -> union is minimal}
    Let $D', D'' \subseteq V$ be disjoint minimal sets, and let $I = N(D') \cap N(D'')$.
    If there are domination selectors $\mathcal{D}':D' \to N(D)$ and $\mathcal{D}'': D'' \to N(D'')$ such that $\mathcal{D}'(D') \cap I = \emptyset$ and $\mathcal{D}''(D'') \cap I = \emptyset$, then $D = D' \cup D''$ is also minimal.
\end{lemma}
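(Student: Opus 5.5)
We construct a domination selector for $D = D' \cup D''$ directly and then apply Lemma~\ref{lem. minimalEquiv}, which characterizes minimality (in the sense of Definition~\ref{def. minimal}) by the existence, for each $v\in D$, of a vertex $u\in N(D)$ with $N(u)\cap D=\{v\}$. (The codomain of $\mathcal{D}'$ written as $N(D)$ in the statement is read as $N(D')$.) Define $\mathcal{D}:D\to N(D)$ by $\mathcal{D}(v)=\mathcal{D}'(v)$ for $v\in D'$ and $\mathcal{D}(v)=\mathcal{D}''(v)$ for $v\in D''$. This is well defined because $D'$ and $D''$ are disjoint. Its values lie in $N(D)$ because $N(D')\cup N(D'')=N(D)$.

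The key step is to check that $N(\mathcal{D}(v))\cap D=\{v\}$. By symmetry, take $v\in D'$ and set $u=\mathcal{D}'(v)$. We already know $N(u)\cap D'=\{v\}$, so it remains to show $N(u)\cap D''=\emptyset$. Suppose instead that some $w\in D''$ is adjacent to $u$. Then $u\in N(w)\subseteq N(D'')$. Also $u\in N(D')$, since $u$ is a value of $\mathcal{D}'$. Hence $u\in I$, which contradicts the hypothesis $\mathcal{D}'(D')\cap I=\emptyset$. Therefore
\[
N(u)\cap D=(N(u)\cap D')\cup(N(u)\cap D'')=\{v\}.
\]
The same argument, with the roles of $D'$ and $D''$ exchanged, handles $v\in D''$.

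So every $v\in D$ has a vertex $u=\mathcal{D}(v)\in N(D)$ with $N(u)\cap D=\{v\}$, and Lemma~\ref{lem. minimalEquiv} gives that $D$ is minimal. There is no real obstacle here. The only point needing care is the observation that a vertex adjacent to both a member of $D'$ and a member of $D''$ lies in $I$ by definition; this is exactly what the hypothesis on the selectors excludes.
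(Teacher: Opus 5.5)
Your proof is correct and follows essentially the same route as the paper: both glue $\mathcal{D}'$ and $\mathcal{D}''$ into a single map on $D$ and use $\mathcal{D}'(v)\notin I$ to get $N(\mathcal{D}'(v))\cap D''=\emptyset$, hence $N(\mathcal{D}(v))\cap D=\{v\}$. You also spell out the contrapositive behind that step and invoke Lemma~\ref{lem. minimalEquiv} explicitly, which the paper leaves implicit, and you rightly read the codomain $N(D)$ of $\mathcal{D}'$ as the typo it is for $N(D')$.
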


\begin{proof}
    Define $\mathcal{D}:D \to N(D)$ by
    $$
    \mathcal{D}(x) := \begin{cases}
        \mathcal{D}'(x) & x \in D'\\
        \mathcal{D}''(x) & x \in D''\ .
    \end{cases}
    $$
    We show that $\mathcal{D}$ is a domination selector of $D$.
    Let $v \in D$.
    By symmetry of $D'$ and $D''$, suppose that $v \in D'$.
    Then we have
    $$
    N(\mathcal{D}(v)) \cap D = N(\mathcal{D}'(v)) \cap D.
    $$
    Since $\mathcal{D}'(v) \not\in I$, we have $N(\mathcal{D}'(v)) \cap D'' = \emptyset$.
    Hence we get
    $$
    N(\mathcal{D}'(v)) \cap D = (N(\mathcal{D}'(v)) \cap D') \cup (N(\mathcal{D}'(v)) \cap D'') = \set{v} \cup \emptyset = \set{v}.
    $$
    Therefore, $\mathcal{D}$ is a domination selector of $D$.
\end{proof}

\begin{corollary}\label{cor. neighbor disjoint min. sets  -> union is min.}    
    Let $D', D'' \subseteq V$ be disjoint minimal sets such that $N(D') \cap N(D'') = \emptyset$.
    Then $D = D' \cup D''$ is also a minimal set.
\end{corollary}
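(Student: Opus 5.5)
This follows directly from Lemma~\ref{lem. D(v) not in intersection -> union is minimal}: I will produce domination selectors for $D'$ and $D''$ whose images avoid the intersection $I = N(D') \cap N(D'')$, and then invoke that lemma.

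First, since $D'$ and $D''$ are minimal, Lemma~\ref{lem. minimalEquiv} guarantees domination selectors. Choose any $\mathcal{D}' : D' \to N(D')$ and $\mathcal{D}'' : D'' \to N(D'')$ as in Definition~\ref{def. dom. selector}.

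Second, by hypothesis $I = N(D') \cap N(D'') = \emptyset$. The images $\mathcal{D}'(D') \subseteq N(D')$ and $\mathcal{D}''(D'') \subseteq N(D'')$ therefore trivially satisfy $\mathcal{D}'(D') \cap I = \emptyset$ and $\mathcal{D}''(D'') \cap I = \emptyset$.

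Finally, $D'$ and $D''$ are disjoint minimal sets and all hypotheses of Lemma~\ref{lem. D(v) not in intersection -> union is minimal} hold, so $D = D' \cup D''$ is minimal. There is no genuine obstacle. The only point to check is that domination selectors exist for arbitrary minimal sets, not just minimal TDSs, and that is exactly what Lemma~\ref{lem. minimalEquiv} provides.
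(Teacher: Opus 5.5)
Your proof is correct and is essentially the paper's argument: the paper states this corollary without a separate proof, as the special case $I=\emptyset$ of Lemma~\ref{lem. D(v) not in intersection -> union is minimal}. In that case any domination selectors for $D'$ and $D''$ (which exist by Lemma~\ref{lem. minimalEquiv}) trivially have images avoiding $I$.
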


\section{Balanced trees, RD-, and BDSs}\label{sec. Balanced trees}

In this section, we define balanced trees and two types of dominating sets which we then use to give our first characterization of WTD trees in the main result of this section, Corollary~\ref{cor. WTD iff RDBD WTD}. 

\begin{assumptions}
In this section, we assume that $T = (V,E)$ is a tree with a 2-coloring $\chi: V \to \set{\R,\B}$ where $\R$ and $\B$ stands for red and blue color, respectively.
\end{assumptions}

\begin{definition}\label{def. delt. tree}
    The tree $T$ is \emph{balanced} if no two vertices of the same height are adjacent.
\end{definition}

Proposition~\ref{prop. equiv. delt. tree} gives an equivalent definition of balanced trees.
We next give a lemma used in its proof.

\begin{lemma}\label{lem. adjacent, then height differ by 1}
    Let $T$ be a balanced tree, and let $u,v \in V$ be distinct vertices that are adjacent to each other.
    Then $\hh(u) - \hh(v) = \pm 1$.
\end{lemma}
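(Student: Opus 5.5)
The argument rests on one general fact: in any graph with at least one leaf, the height function is $1$-Lipschitz along edges. So I would first show that $|\hh(u) - \hh(v)| \le 1$ whenever $uv \in E$. Once that is in hand, balancedness removes the remaining case.

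For the Lipschitz bound, let $uv \in E$ and choose a leaf $\ell$ with $\dist(v,\ell) = \hh(v)$. This uses that $T$ has a leaf, which holds whenever $T$ has an edge, since every finite tree with at least two vertices has at least two leaves. Then $\dist(u,\ell) \le \dist(u,v) + \dist(v,\ell) = 1 + \hh(v)$, because a walk from $u$ through $v$ to $\ell$ contains a path of at most that length. Since $\hh(u)$ is a minimum over leaves, $\hh(u) \le \hh(v) + 1$. Swapping the roles of $u$ and $v$ gives $\hh(v) \le \hh(u) + 1$, and therefore $\hh(u) - \hh(v) \in \set{-1, 0, 1}$.

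It remains to exclude the value $0$. By Definition~\ref{def. delt. tree}, no two adjacent vertices of a balanced tree have the same height, so $\hh(u) \neq \hh(v)$, and hence $\hh(u) - \hh(v) = \pm 1$.

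The isolated-vertex convention never applies here, since $u$ and $v$ are adjacent and therefore have positive degree. The only step requiring care is justifying the triangle inequality for $\dist$ in terms of paths rather than walks. In a tree this is immediate: the unique path from $u$ to $\ell$ has length at most that of any walk between them.
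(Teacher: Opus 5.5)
Your proof is correct and follows essentially the same route as the paper: take a leaf realizing $\hh(v)$, observe that $u$ is within distance $\hh(v)+1$ of it, and use balancedness to exclude equal heights (the paper phrases this as a contradiction after assuming WLOG $\hh(u) > \hh(v)$). Your triangle-inequality formulation is slightly cleaner, since it also covers the case where $u$ lies on the path from $v$ to the leaf, which the paper's claim that $(v_0,\dots,v_{h'},u)$ is a path tacitly ignores.
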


\begin{proof}
    Set $\hh(u) = h$ and $\hh(v) = h'$.
    Since $T$ is a balanced tree, we must have $h \neq h'$ as $uv \in E$.
    Without loss of generality, assume that $h > h'$.
    We claim that $h' = h - 1$.
    By way of contradiction, suppose that $h' < h - 1$.
    Then there exists a leaf $v_0 \in V_0$ so that $\pth(v_0,v) = (v_0,\dots,v_{h'})$ with $v_{h'} = v$.
    Now consider $\pth(v_0,u) = (v_0,\dots,v_{h'},u)$; the equality holds by the uniqueness of paths in trees.
    Since we have
    \begin{align*}
       |\pth(v_0,u)| = h'+1 < h 
    \end{align*}
    $\hh(u)$ is less than $h$, a contradiction.
\end{proof}

\begin{corollary}\label{cor. same hh -> odd dist}
    Let $T$ be a balanced tree and let $u,v \in V_h$ for some $0 \leq h \leq \hh(T)$.
    Then $\dist(u,v)$ is even.
\end{corollary}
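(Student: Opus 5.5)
The plan is to walk along the unique path from $u$ to $v$ and track how the height changes at each step, using Lemma~\ref{lem. adjacent, then height differ by 1} as the only input. The corollary label says ``odd dist'', but the statement asserts that $\dist(u,v)$ is even, and that is what I will prove.

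Write $\pth(u,v) = (w_0, w_1, \dots, w_d)$ with $w_0 = u$, $w_d = v$ and $d = \dist(u,v)$. Consecutive vertices $w_i, w_{i+1}$ are distinct and adjacent. By Lemma~\ref{lem. adjacent, then height differ by 1}, $\hh(w_{i+1}) - \hh(w_i) = \varepsilon_i$ for some $\varepsilon_i \in \set{1,-1}$.

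Summing these differences telescopes:
\[
0 = \hh(v) - \hh(u) = \sum_{i=0}^{d-1} \varepsilon_i .
\]
The sum on the right is congruent to $d$ modulo $2$, because each $\varepsilon_i \equiv 1 \pmod 2$. Hence $d \equiv 0 \pmod 2$, so $\dist(u,v)$ is even.

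An equivalent way to say this is that height parity gives a proper 2-coloring of $T$, and in a tree two vertices with the same color are at even distance. The degenerate case $u = v$ gives $d = 0$, which is even.

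There is no real obstacle here, since all the work is already done in Lemma~\ref{lem. adjacent, then height differ by 1}. The only care needed is to note that $T$ has a leaf whenever $|V| \ge 2$, so that heights are defined. The single-vertex tree is trivial.
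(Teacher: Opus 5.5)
Your proof is correct and matches the paper's argument: both track height parity along the unique $u$--$v$ path using Lemma~\ref{lem. adjacent, then height differ by 1}. The only difference is that you write it as a direct telescoping sum, whereas the paper assumes the distance is odd and argues by induction to a contradiction.
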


\begin{proof}
    If $u = v$, then $\dist(u,v) = 0$ which is even.
    So assume that $u \neq v$.
    By way of contradiction, assume that $\dist(u,v) = 2k + 1$ for some $k \in \mathbb{N}$.
    Then by inductively applying Lemma~\ref{lem. adjacent, then height differ by 1} on $k$, one can show that the parity of $\hh(u)$ and $\hh(v)$ are different, hence $\hh(u) \neq \hh(v)$, a contradiction. 
\end{proof}

\begin{proposition}\label{prop. equiv. delt. tree}
The following conditions on a tree $T$ are equivalent:
\begin{enumerate}
	\item $T$ is balanced.
	\item Any two vertices of the same height have the same color under $\chi$.
	\item Every leaf has the same color.
\end{enumerate}
\end{proposition}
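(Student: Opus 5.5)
The plan is to prove the cycle of implications (1)$\Rightarrow$(2)$\Rightarrow$(3)$\Rightarrow$(1). Throughout I read the $2$-coloring $\chi$ as a \emph{proper} coloring, so adjacent vertices get different colors. The one basic fact needed is this: in a tree with a proper $2$-coloring, two vertices have the same color if and only if the distance between them is even. To see it, colors alternate along $\pth(x,y)$. More generally, along any walk from $x$ to $y$ the colors alternate, so every walk from $x$ to $y$ has length of the same parity as $\dist(x,y)$.

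\textbf{(1)$\Rightarrow$(2).} Let $u,v \in V_h$. By Corollary~\ref{cor. same hh -> odd dist}, $\dist(u,v)$ is even. By the fact above, $\chi(u)=\chi(v)$.

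\textbf{(2)$\Rightarrow$(3).} Every leaf has height $0$, so all leaves lie in $V_0$. Condition (2) then says they all have the same color.

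\textbf{(3)$\Rightarrow$(1).} Suppose every leaf has the same color, and assume for contradiction that adjacent vertices $u,v$ satisfy $\hh(u)=\hh(v)=h$.
\begin{itemize}
\item Choose leaves $\ell_u,\ell_v$ with $\dist(\ell_u,u)=\dist(\ell_v,v)=h$.
\item Concatenate $\pth(\ell_u,u)$, the edge $uv$, and $\pth(v,\ell_v)$. This gives a walk from $\ell_u$ to $\ell_v$ of length $2h+1$. It need not be a path, but only its parity matters.
\item Since the walk length is odd, the fact above gives $\chi(\ell_u)\neq\chi(\ell_v)$, contradicting (3).
\end{itemize}
In the degenerate case $T=K_2$ the argument still goes through with $h=0$ and $\ell_u=u$, $\ell_v=v$. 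In that case (3) fails and so does (1), which is consistent. The single-vertex tree satisfies all three conditions trivially.

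The only point needing care is the use of a walk rather than a path in the last step. I handle it with the bipartite parity observation instead of trying to identify the actual path between $\ell_u$ and $\ell_v$. Beyond that, nothing here is a real obstacle.
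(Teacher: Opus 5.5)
Your proof is correct and follows essentially the same route as the paper: the same cycle (1)$\Rightarrow$(2)$\Rightarrow$(3)$\Rightarrow$(1), with (1)$\Rightarrow$(2) via Corollary~\ref{cor. same hh -> odd dist} and (3)$\Rightarrow$(1) via parity of distances from same-colored leaves. The only cosmetic difference is in (3)$\Rightarrow$(1). The paper argues directly that $u$ and $v$ inherit the same color and so cannot be adjacent, treating $h=0$ separately. You instead argue by contradiction using an odd walk between the two leaves, which avoids that case split.
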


\begin{proof}
\noindent $(1 \Rightarrow 2)$ Assume $T$ is balanced, and let $u,v \in V_h$ for some $0 \leq h \leq \hh(T)$.
Then by Corollary~\ref{cor. same hh -> odd dist}, $\dist(u,v)$ is even, hence $\chi(u) = \chi(v)$.

$(2 \Rightarrow 3)$ Since all leaves have height 0, they have the same color.

$(3\hspace{-1.5mm} \Rightarrow \hspace{-1mm} 1)$ Assume that every leaf in $T$ has the same color. 
Let $u,v \in V_h$ for some $0 \leq h \leq \hh(T)$.
If $h = 0$, then $u$ and $v$ are leaves, so they have the same color by assumption, implying they are not adjacent.
If $h > 0$, then there are leaves $u_0,v_0 \in V_0$ so that $\pth(u_0,u) = (u_0,\dots,u_h)$ and $\pth(v_0,v) = (v_0,\dots,v_h)$ where $u_h = u$ and $v_h = v$.
Since $u_0$ and $v_0$ have the same color, $u$ and $v$ must have the same color, implying that $uv \not\in E$.
\end{proof}

\begin{example}
    Consider the following trees with 2-coloring.
    \begin{figure}[H]
        \centering
        \includegraphics{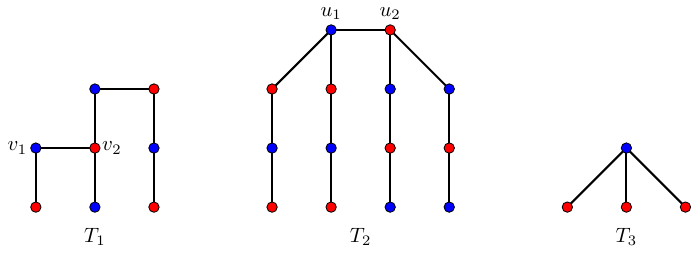}
        \caption{Trees with 2-colorings}
        \label{fig. 3 trees with coloring}
    \end{figure}        
    \noindent The tree $T_1$ is not balanced since $\hh(v_1) = 1 = \hh(v_2)$ but $v_1v_2 \in E(T_1)$.
    Similarly, $u_1$ and $u_2$ show that $T_2$ is not balanced.
    We can also deduce this from Proposition~\ref{prop. equiv. delt. tree} since the leaves in each tree do not have the same color.
    On the other hand, $T_3$ is a balanced tree by definition, or since all of its leaves share the same color.
\end{example}

The following corollary is a quick consequence of Proposition~\ref{prop. equiv. delt. tree}.

\begin{corollary}\label{cor. same height parity -> same color}
    Let $T$ be a balanced tree.
    Then all vertices of even height have the same color, and all vertices of odd height have the same color.
\end{corollary}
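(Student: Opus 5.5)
The plan is to read off the parity statement from Proposition~\ref{prop. equiv. delt. tree} and Lemma~\ref{lem. adjacent, then height differ by 1}. Fix a leaf $\ell_0 \in V_0$; it exists since a tree with an edge has leaves, and the one-vertex tree is trivial. It is convenient to call $\chi(\ell_0)$ the \emph{leaf color}; by part (3) of Proposition~\ref{prop. equiv. delt. tree}, every leaf has this color.

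The first step is to show that every vertex $v$ with even height has the leaf color. Set $h = \hh(v)$ and choose a leaf $v_0$ with $\dist(v_0,v) = h$, so that $\pth(v_0,v)=(v_0,\dots,v_h)$. Since $\chi$ is a proper 2-coloring of the tree, the colors alternate along any path. Hence $\chi(v) = \chi(v_0)$ when $h$ is even, and $\chi(v)\neq\chi(v_0)$ when $h$ is odd. Because $\chi(v_0)$ is the leaf color, every even-height vertex has the leaf color.

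The second step is the odd case. By the same path argument, every odd-height vertex has the color different from the leaf color. Since there are only two colors, all odd-height vertices share that single other color.

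Alternatively, one could use part (2) of Proposition~\ref{prop. equiv. delt. tree} to see that each $V_h$ is monochromatic. Lemma~\ref{lem. adjacent, then height differ by 1} then links $V_h$ to $V_{h+1}$ along the path from a leaf, so the colors alternate with $h$. I expect no real obstacle here. The only point needing care is that $\chi$ is a proper coloring: this is how the paper uses the term, since the proof of Proposition~\ref{prop. equiv. delt. tree} relies on even distance implying equal color.
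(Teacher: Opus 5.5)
Your proof is correct and is essentially the paper's argument. The paper records this corollary as a quick consequence of Proposition~\ref{prop. equiv. delt. tree} without writing out a proof. Your reasoning is the one that proposition's proof uses in the step $(3\Rightarrow 1)$: all leaves share a color, and colors alternate along the length-$h$ path from a nearest leaf. You simply track the parity of $h$ explicitly.
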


Now we give a way to partition a TDS of a tree into 2 new types of dominating sets.
This is used to construct two minimal TDS with different sizes.

\begin{notation}
    Denote the set of red and blue vertices of $T$ by $V_{\R}$ and $V_{\B}$, respectively; i.e., for all $C \in \set{\R,\B}$, define $V_C := \chi^{-1}(C)$.
\end{notation}
    
\begin{definition}
    Let $D \subseteq V$.
    Then $D$ is a \emph{red-dominating set} (RDS) if $N(D) = V_\R$. 
    And $D$ is a \emph{blue-dominating set} (BDS) if $N(D) = V_\B$.
\end{definition}

Note that by the definition of 2-coloring, if $D$ is an RDS, then $D \subseteq V_\B$, and $D \subseteq V_\R$ if $D$ is a BDS.
Hence we can state the following as a fact.

\begin{fact}\label{fact. TDS iff RDS U BDS}
    Let $D' \subseteq V_\B$, $D'' \subseteq V_\R$, and $D = D' \cup D''$. 
    Then
    \begin{itemize}
        \item[(1)] $D$ is a TDS if and only if $D'$ and $D''$ are red-dominating and blue-dominating, respectively.
        \item[(2)] $D$ is a minimal TDS if and only if $D'$ and $D''$ are minimal red-dominating and minimal blue-dominating, respectively.
    \end{itemize}
\end{fact}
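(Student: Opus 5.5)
The plan rests on the fact that $\chi$ is a proper 2-coloring: every neighbor of a blue vertex is red and every neighbor of a red vertex is blue. Consequently, for $D' \subseteq V_\B$ and $D'' \subseteq V_\R$ we have $N(D') \subseteq V_\R$ and $N(D'') \subseteq V_\B$. Since $N(D) = N(D') \cup N(D'')$ and $V_\R \cap V_\B = \emptyset$, intersecting with each color class gives
$$
N(D) \cap V_\R = N(D'), \qquad N(D) \cap V_\B = N(D'').
$$
Both parts of the statement will follow from this decomposition.

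For (1), $D$ is a TDS exactly when $N(D) = V = V_\R \sqcup V_\B$. By the displayed equalities this holds if and only if $N(D') = V_\R$ and $N(D'') = V_\B$, that is, $D'$ is an RDS and $D''$ is a BDS.

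For (2), I use the minimality notion of Definition~\ref{def. minimal}, so a minimal RDS means an RDS that is a minimal set. I interpret the statement in this sense; the argument also works for minimality among RDSs, since an RDS with $N = V_\R$ is minimal in one sense exactly when it is in the other.

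Suppose first that $D$ is a minimal TDS. By (1), $D'$ is red-dominating and $D''$ is blue-dominating. If some $D_0 \subsetneq D'$ had $N(D_0) = V_\R$, then $D_0 \cup D''$ would be a TDS by (1), and it is a proper subset of $D$, contradicting minimality. The argument for $D''$ is symmetric.

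Conversely, suppose $D'$ and $D''$ are minimal with $N(D') = V_\R$ and $N(D'') = V_\B$. Then $D$ is a TDS by (1). Let $S \subsetneq D$ be any subset with $N(S) = V$. Split it as $S' = S \cap V_\B$ and $S'' = S \cap V_\R$. By (1), $N(S') = V_\R = N(D')$ and $N(S'') = V_\B = N(D'')$. Since $S$ is proper, at least one of $S' \subsetneq D'$ or $S'' \subsetneq D''$ holds, which contradicts the minimality of $D'$ or $D''$. Hence $D$ is a minimal TDS.

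An alternative route for the converse is Lemma~\ref{lem. D(v) not in intersection -> union is minimal}. Because $N(D') \cap N(D'') \subseteq V_\R \cap V_\B = \emptyset$, Corollary~\ref{cor. neighbor disjoint min. sets  -> union is min.} shows directly that $D$ is minimal.

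There is no real obstacle in this argument. The one point that needs care is fixing the meaning of "minimal red-dominating": minimal among RDSs versus minimal in the sense of Definition~\ref{def. minimal}. These coincide here because both conditions amount to having no proper subset with the same neighborhood $V_\R$.
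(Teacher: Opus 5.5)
Your proof is correct and uses the same idea the paper relies on. The paper states this as a Fact without a written proof, citing only that a proper 2-coloring forces $N(D') \subseteq V_\R$ and $N(D'') \subseteq V_\B$; your decomposition $N(D) \cap V_\R = N(D')$, $N(D) \cap V_\B = N(D'')$, and your splitting of a smaller TDS $S$ as $(S \cap V_\B) \cup (S \cap V_\R)$, carry out that verification, and your alternative converse via the disjoint-neighborhood corollary is also valid because $N(D') \cap N(D'') = \emptyset$.
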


\begin{example}
Consider the tree $Y$ with vertices colored red and blue in Figure~\ref{fig. Yifan Graph colored}.
\begin{figure}[ht]
\centering
\includegraphics{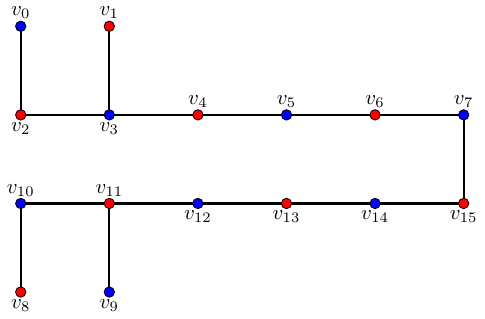}
\caption{Graph $Y$ with a 2-coloring}
\label{fig. Yifan Graph colored}
\end{figure}
The set $D := \{\tcr{v_2}, \tcb{v_3}, \tcb{v_5}, \tcr{v_6}, \tcb{v_{10}}, \tcr{v_{11}}, \tcb{v_{14}}, \tcr{v_{15}}\}$ is a minimal TDS of $Y$ by Example \ref{domination selector example}.
Now, we partition the set $D$ by the coloring: set  $D' := \{ \tcb{v_3}, \tcb{v_5},  \tcb{v_{10}},  \tcb{v_{14}}\}$ and $D'' := \{\tcr{v_2},\tcr{v_6},\tcr{v_{11}},\tcr{v_{15}}\}$.
Then it is straightforward to show that $D'$ and $D''$ are red-dominating and blue dominating, respectively.
Now, define $\mathcal{D}': D' \rightarrow V(Y)$ and $\mathcal{D}'': D'' \rightarrow V(Y)$ by
\begin{align*}
&\mathcal{D}'(x) = \begin{cases}
v_1, & x = v_3\\
v_6 ,&  x = v_5\\
v_8 ,& x = v_{10}\\
v_{15} ,& x = v_{14}\\
\end{cases}
 &\mbox{ and }&
&\mathcal{D}''(x) = \begin{cases}
v_0, &  x = v_2\\
v_5, &  x = v_6\\
v_9, &  x = v_{11}\\
v_{14}, &  x = v_{15}\\
\end{cases}.
\end{align*}
Notice that $\mathcal{D}'$ and $\mathcal{D}''$ are obtained from the function $\mathcal{D}$ from Example \ref{domination selector example} by restricting its domain to $D'$ and $D''$, respectively.
Hence the functions $\mathcal{D}'$ and $\mathcal{D}''$ are domination selectors for $D'$ and $D''$ respectively, hence both sets are minimal.
\end{example}

\begin{corollary}\label{cor. WTD iff RDBD WTD}
    A tree $T$ is WTD if and only if every minimal RDS has the same size and every minimal BDS has the same size.
\end{corollary}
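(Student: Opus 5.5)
Fact~\ref{fact. TDS iff RDS U BDS}(2) gives a bijection between minimal TDSs of $T$ and pairs $(D',D'')$, where $D'$ is a minimal RDS and $D''$ is a minimal BDS. The bijection is $D \mapsto (D\cap V_\B, D\cap V_\R)$, with inverse $(D',D'')\mapsto D'\cup D''$. Because $D' \subseteq V_\B$ and $D''\subseteq V_\R$ are disjoint, we have $|D| = |D'|+|D''|$. The corollary then reduces to an elementary statement about a sum of two independently chosen quantities.

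First I will note that both families are nonempty. Since $T$ has no isolated vertices, $V$ is itself a TDS, so some minimal TDS $D_0$ exists. Its parts $D_0\cap V_\B$ and $D_0\cap V_\R$ are then a minimal RDS and a minimal BDS by Fact~\ref{fact. TDS iff RDS U BDS}(2).

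($\Leftarrow$) Suppose every minimal RDS has size $r$ and every minimal BDS has size $b$. Any minimal TDS $D$ splits into a minimal RDS $D\cap V_\B$ and a minimal BDS $D\cap V_\R$, so $|D| = r+b$. Hence $T$ is WTD.

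($\Rightarrow$) Suppose $T$ is WTD, and let $D_1', D_2'$ be minimal RDSs. Fix any minimal BDS $D''$, which exists by the first step. By Fact~\ref{fact. TDS iff RDS U BDS}(2), both $D_1'\cup D''$ and $D_2'\cup D''$ are minimal TDSs, so they have equal size. By disjointness this gives $|D_1'|+|D''| = |D_2'|+|D''|$, hence $|D_1'|=|D_2'|$. The BDS case is symmetric, fixing a minimal RDS instead.

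There is no real obstacle here. The only points needing care are the existence of at least one minimal BDS (respectively RDS) to pair with, and the disjointness of the red and blue parts, which is what makes the sizes add.
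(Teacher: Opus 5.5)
Your proof is correct and takes the same approach as the paper, whose proof is the one-line instruction to apply Fact~\ref{fact. TDS iff RDS U BDS} with $D' = D\cap V_\B$ and $D'' = D\cap V_\R$. Your version also spells out two things that line leaves implicit: sizes add because the red and blue parts are disjoint, and the forward direction needs at least one minimal BDS (resp.\ RDS) to pair with, which you correctly obtain by splitting a minimal TDS of $T$.
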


\begin{proof}
    Apply Fact~\ref{fact. TDS iff RDS U BDS} with $D' = D \cap V_\B$ and $D'' = D \cap V_\R$.
\end{proof}

\section{Descriptive characterization of WTD balanced trees}\label{sec. des. char. of WTD bal. trees}

Theorem~\ref{thm. char. of WTD delt. trees} is the main result of this section.
It allows us to detect when a balanced tree is WTD.
The case of an arbitrary tree is handled in Section~\ref{sec. interior graphs of trees} below.
We begin with a definition.

\begin{definition}\label{def. radar}
    Let $x \in V(G)$.
    The \emph{radar of $x$ of distance $d$} (in $G$) is the set of vertices
    $$
    R(x,d) = R_G(x,d) := \Set{y \in V}{\dist(x,y) = d}.
    $$
    Now let $T$ be a tree with $x,r \in V(T)$.
    The \emph{branch spanned from $r$ to $x$} is the set of vertices
    $$
    \br_r(x) := \Set{y \in V(T)}{x \in \pth(y,r)}.
    $$
\end{definition}

\begin{example}
\label{branch and radar example}
Let $T$ be the tree given in Figure~\ref{fig. branch and radar}.
\begin{figure}[ht]
\centering
\includegraphics[scale = 0.85]{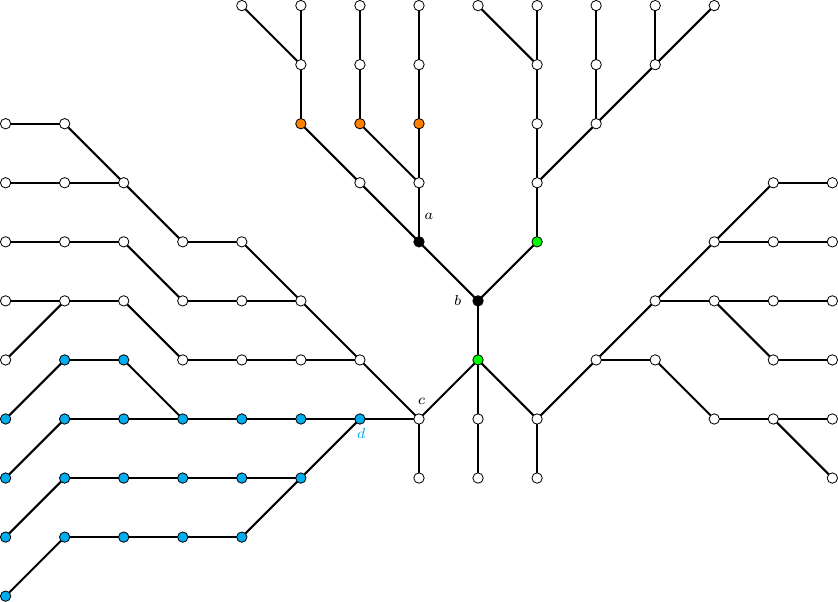}
\caption{Branch and radar example on $G$}
\label{fig. branch and radar}
\end{figure}
Then the radar $R(a,2)$ is the set of \textcolor{green}{green} and \textcolor{orange}{orange} vertices.
The branch $\br_c(d)$ is the set of all \textcolor{cyan}{cyan} vertices; note that $c \not\in \br_c(d)$.
Let $H$ be the subgraph induced by the vertex set $\br_b(a)$.
Then the radar $R_H(a,2)$ is the set of \textcolor{orange}{orange} vertices.
\end{example}

The next few results provide conditions that force a balanced tree to be not WTD by constructing two RDSs of different sizes and using Corollary~\ref{cor. WTD iff RDBD WTD}.

\begin{assumptions}
    For the remainder of this section, given a balanced tree assume that the even height vertices are blue and odd height vertices are red using Corollary~\ref{cor. same height parity -> same color}.
\end{assumptions}

\begin{lemma}\label{lem. heighestSmallest}
Let $T$ be a balanced tree of height $d > 0$.
Then $|V_{d-1}| > |V_{d}|$.
\end{lemma}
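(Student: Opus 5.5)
The plan is a counting argument on the subforest of $T$ induced by the top two height levels. Set $F$ to be the subgraph of $T$ induced by $V_{d-1} \cup V_d$. Since $T$ is a tree, $F$ is a forest, and $F$ is nonempty because $V_d \neq \emptyset$ by the definition of $\hh(T) = d$. A forest with $n$ vertices has at most $n-1$ edges, so
$$
|E(F)| \le |V_{d-1}| + |V_d| - 1 .
$$

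Next I bound $|E(F)|$ from below by looking at the vertices of $V_d$. Let $v \in V_d$ and let $u$ be any neighbor of $v$. By Lemma~\ref{lem. adjacent, then height differ by 1}, $\hh(u) = d \pm 1$. Since no vertex has height larger than $d$, we get $\hh(u) = d-1$. Hence every edge of $T$ at $v$ is an edge of $F$ joining $v$ to a vertex of $V_{d-1}$. There are no edges inside $V_d$, because $T$ is balanced.

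I also need $\deg(v) \ge 2$. Since $d > 0$, the vertex $v$ is not a leaf. It is not isolated either: an isolated vertex has height $0$ by convention, and in any case $T$ has a leaf, so $T$ has at least two vertices and is connected. Therefore $\deg_F(v) = \deg_T(v) \ge 2$. Each edge of $F$ has exactly one endpoint in $V_d$, so summing over $V_d$ counts each edge of $F$ at most once. This gives $|E(F)| \ge 2|V_d|$.

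Combining the two bounds gives $2|V_d| \le |V_{d-1}| + |V_d| - 1$, that is, $|V_d| \le |V_{d-1}| - 1 < |V_{d-1}|$. The only delicate points are verifying that vertices of maximal height have degree at least $2$ and that all of their neighbors lie in $V_{d-1}$, which the argument above handles. The rest is the standard edge bound for forests.
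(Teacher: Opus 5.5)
Your proof is correct and uses the same edge count as the paper on the forest induced by $V_{d-1}\cup V_d$; the paper argues by contradiction from $2|V_d|\ge |V_{d-1}|+|V_d|$, while you derive $|V_d|\le |V_{d-1}|-1$ directly. You are somewhat more careful than the paper in using Lemma~\ref{lem. adjacent, then height differ by 1} to show that every neighbor of a height-$d$ vertex lies in $V_{d-1}$ and that such a vertex has degree at least $2$.
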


\begin{proof}
By way of contradiction, assume that $|V_{d-1}| \leq |V_{d}|$.
Since no vertex in $V_d(T)$ is a leaf, every vertex in $V_d(T)$ is adjacent to at least 2 vertices in $V_{d-1}(T)$. 
Hence there are at least $2|V_d|$ edges between $V_d$ and $V_{d-1}$. 
Thus the subgraph induced by $V_d(T) \cup V_{d-1}(T)$ is not a forest; any forest $H$ with $k$ connected components has exactly $|V(H)| - k$ number of edges, but the subgraph induced by $V_d \cup V_{d-1}$ has more edges than the number of vertices as $2 |V_d| \geq |V_d| + |V_{d-1}|$.
Since an induced subgraph of $T$ is not a forest, $T$ is not a tree, a contradiction.
\end{proof}

The next result extends the WTD-ness of $P_4$.

\begin{theorem}\label{thm. P5 -> Mix}
Let $T$ be a balanced tree. If $T$ has two leaves $l_1$ and $l_2$ such that $\dist(l_1,l_2) = 4$, then $T$ is not WTD. 
\end{theorem}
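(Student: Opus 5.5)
The plan is to use Corollary~\ref{cor. WTD iff RDBD WTD}: I will build two minimal RDSs of $T$ of different sizes. Under the standing assumption of this section, an RDS consists of blue vertices, that is, vertices of even height, and it must dominate every red vertex, that is, every vertex of odd height.

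\textbf{Setting up the path.} Let $\pth(l_1,l_2) = (l_1,s_1,c,s_2,l_2)$. Then $s_1,s_2$ are support vertices of height $1$. The vertex $c$ is not a leaf because it has degree at least $2$. By Lemma~\ref{lem. adjacent, then height differ by 1} its height is $0$ or $2$, so $\hh(c)=2$ and $c$ is blue, while $s_1,s_2$ are red.

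\textbf{Step 1: a minimal RDS avoiding $c$.} First I check that $V_\B\setminus\{c\}$ is an RDS. Every red vertex $r$ has odd height, so it is not a leaf and has at least two neighbors, all of them blue. Hence $r$ has a blue neighbor other than $c$. Next I want a minimal RDS $D_1\subseteq V_\B\setminus\{c\}$ with $l_1,l_2\in D_1$.

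To obtain it, take any minimal RDS $D\subseteq V_\B\setminus\{c\}$ and add $l_1,l_2$. Then shrink to a minimal RDS, pruning every other vertex before considering $l_1$ or $l_2$. I expect this bookkeeping to be the main obstacle: I must make sure $l_1$ and $l_2$ survive. If the pruning tends to discard them, I would instead exchange the neighbors of $s_1$ (respectively $s_2$) in $D$ whose only private neighbor is $s_1$ (respectively $s_2$) for $l_1$ (respectively $l_2$), and then verify minimality via a domination selector as in Corollary~\ref{cor. dom selector <-> minimal}.

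Once $l_1,l_2\in D_1$ with $D_1$ minimal, their private neighbors are forced: each $l_i$ has the single neighbor $s_i$, so Lemma~\ref{lem. minimalEquiv} gives $N(s_i)\cap D_1=\{l_i\}$ for $i=1,2$.

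\textbf{Step 2: swap two leaves for $c$.} Let $D_2:=(D_1\setminus\{l_1,l_2\})\cup\{c\}$. This is an RDS: $c$ dominates $s_1$ and $s_2$, and every other red vertex keeps its dominator from $D_1$, since $l_1,l_2$ dominate only $s_1,s_2$. Prune $D_2$ to a minimal RDS $D_2'$. The vertex $c$ cannot be removed during pruning, because $N(s_1)\cap D_2=\{c\}$ by the identity $N(s_1)\cap D_1=\{l_1\}$. Therefore
$$|D_2'|\le |D_2| = |D_1|-1 < |D_1|.$$
So $T$ has two minimal RDSs of different sizes, and Corollary~\ref{cor. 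WTD iff RDBD WTD} shows that $T$ is not WTD.
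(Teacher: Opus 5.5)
Your Step 2 is sound; you do not even need $c$ to survive the pruning, since $|D_2'|\le |D_2| = |D_1|-1$ already suffices. Step 1, however, has a genuine gap. You never produce a minimal RDS containing both $l_1$ and $l_2$, and neither procedure you describe works in general.

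Take the balanced tree with edges $l_1s_1$, $s_1c$, $cs_2$, $s_2l_2$, $s_1x$, $xr$, $ry$, $ys_3$, $s_3l_3$. Here $c,x,y$ have height $2$ and $r$ has height $3$.
\begin{itemize}
\item The set $D=\set{x,l_2,l_3}$ is a minimal RDS avoiding $c$.
\item Every minimal RDS inside $D\cup\set{l_1,l_2}$ must keep $x$, because $x$ is the only dominator of $r$ there. It therefore cannot contain $l_1$, whose only possible private neighbor $s_1$ is also dominated by $x$. So no pruning order rescues $l_1$.
\item Your exchange fallback does not apply either, since $x$ has the private neighbor $r\neq s_1$.
\end{itemize}
The underlying problem is that a neighbor of $s_i$ in $D$ may be the sole dominator of other red vertices. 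Pruning only removes vertices; it never adds the replacement dominators that would be needed before such a neighbor could be dropped.

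The missing idea is to start from a set that already excludes all of $N(s_1)\cup N(s_2)$ except the two leaves. Put $B:=\bigl(V_\B\setminus(N(s_1)\cup N(s_2))\bigr)\cup\set{l_1,l_2}$.
\begin{itemize}
\item A red vertex $r\notin\set{s_1,s_2}$ has at most one neighbor in $N(s_1)\cup N(s_2)$. If it had two, $a$ and $b$, the path from $a$ to $b$ would run inside $\set{a,b,s_1,c,s_2}$ and avoid $r$, so $ra$ and $rb$ would close a cycle.
\item Since $\deg(r)\ge 2$, $r$ therefore has a neighbor in $B$, so $B$ is an RDS.
\item Since $N(s_i)\cap B=\set{l_i}$, \emph{every} minimal RDS $D_1\subseteq B$ contains $l_1$ and $l_2$ (and avoids $c$).
\end{itemize}

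With this repair your Step 2 completes the proof, and the argument is shorter than the paper's. The paper reduces to one leaf per support vertex via Lemma~\ref{lem. LeafAdding}. It then builds the common part $D_u\cup D_s$ from radars on the branches at $v$ (your $c$). Finally it certifies, with domination selectors, that both $D_u\cup D_s\cup\set{v}$ and $D_u\cup D_s\cup\set{l_1,\dots,l_m}$ are minimal. Your prune-and-count step avoids any minimality check for the smaller set.
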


\begin{proof}
Let $s_1$ and $s_2$ be the support vertices adjacent to $l_1$ and $l_2$ respectively, and let $v \in V_2$ be the vertex which is adjacent to both $s_1$ and $s_2$.
By Corollary~\ref{cor. WTD iff RDBD WTD}, it suffices to show that there are two minimal RDSs of different sizes.
By Lemma~\ref{lem. LeafAdding}, we assume that there is a unique leaf for each support vertex in $T$.
Let $u_1,u_2,\cdots,u_k$ be vertices which are adjacent to $v$ which are not support vertices (hence $\hh(u_i) = 3$ by Lemma~\ref{lem. adjacent, then height differ by 1} for all $i$).
Let $s_1,s_2,\cdots,s_m$ be the support vertices adjacent to $v$, and let $l_i$ be the leaf adjacent to $s_i$ for each $i$.
Note that $u_1,\cdots,u_k \in V_\R$ as $v \in V_\B$ since $T$ is a balanced tree (see Figure~\ref{Figure1}).

\begin{figure}[ht]
\centering
\includegraphics[scale = 0.91]{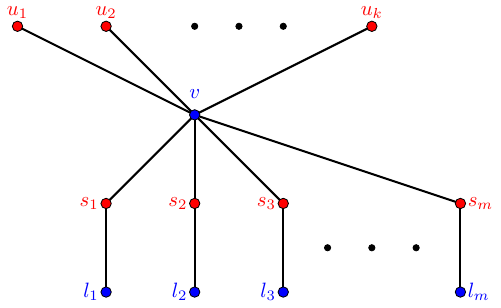}
\caption{Tree $T$ with vertices labeled and colored}
\label{Figure1}
\end{figure}

For $i=1,2,\dots,k$, define $G_{u_i}$ to be the subgraph of $G$ induced by $\br_v(u_i) =: B_{u_i}$.
For each $G_{u_i}$, consider the union of radars $R_{u_i} := R_{G_{u_i}}(u_i,1) \cup \left(\bigcup_{j = 1}^{\infty} R_{G_{u_i}}(u_i,3 + 2j)\right)$.
Since $u_i$ is red color, the set $R_{u_i}$ is colored blue by Lemma~\ref{lem. adjacent, then height differ by 1} and Corollary~\ref{cor. same height parity -> same color}.
Also, one can verify that $R_{u_i}$ is an RDS in $G_{u_i}$.
Since $R_{u_i}$ is an RDS in $G_{u_i}$, there is some minimal RDS $D_{u_i} \subseteq R_{u_i}$ in $G_{u_i}$.
Set $D_u := \cup_{i = 1}^k D_{u_i}$.
By Corollary~\ref{cor. neighbor disjoint min. sets  -> union is min.}, $D_u$ is minimal.

Next, for $j= 1,2,\dots,m$, let $G_{s_j}$ be the subgraph of $G$ induced by 
$B_{s_j}:=\br_v(s_j) \setminus \{l_j\}$.
For each $G_{s_j}$, define the set $R_{s_j} = \bigcup_{i = 0}^{\infty} R_{G_{s_j}}(s_j,3 + 2i)$ (see Figure~\ref{Figure2}).
Then $R_{s_j} \subseteq V_\B(T)$, and $R_{s_j}$ dominates all red vertices in $G_{s_j}$ but $s_j$.
Let $D_{s_j} \subseteq R_{s_j}$ be a minimal set of $R_{s_j}$, and set $D_s := \bigcup_{j = 1}^{m}D_{s_j}$.
Again by Corollary~\ref{cor. neighbor disjoint min. sets  -> union is min.}, $D_s$ is minimal.

\begin{figure}[ht]
\centering
\includegraphics{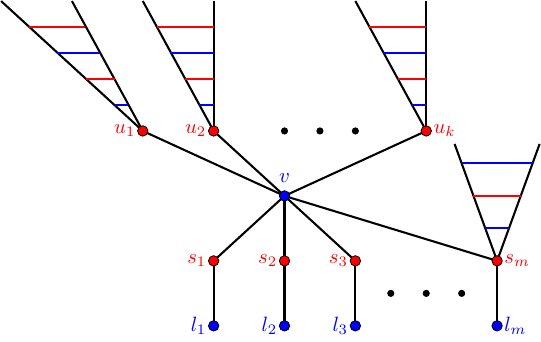}
\caption{Tree $T$ with branches (there are branches growing from all $s_i$'s like $s_m$)}
\label{Figure2}
(View the horizontal lines as the set of vertices)
\end{figure}

Let $D = D_u \cup D_s \cup \{v\}$ and let $D' = D_u \cup D_s \cup \{l_1,l_2,\cdots,l_m\}$.
We claim that $D$ and $D'$ are minimal RDSs in $G$.
Since $N(D_u \cup D_s) = V_\R(T) \setminus \{s_1,s_2,\cdots,s_k\}$, we have $N(D) = N(D') = V_\R(T)$ as $\set{s_1,\dots,s_m} \subseteq N(v)$ and $\set{s_1,\dots,s_m} \subseteq N(l_1,\dots,l_m)$.

Now we check the minimality of $D$ and $D'$.
Set $D_{u,s} := D_u \cup D_s$.
Then $D_{u,s}$ is minimal by Corollary~\ref{cor. neighbor disjoint min. sets  -> union is min.}, hence there exists a domination selector $\mathcal{D}_{u,s}: D_{u,s} \to N(D_{u,s})$.
Note that for each $i$ and for all $x \in D_{u_i}$, we may choose $\mathcal{D}_{u,s}(x) \in B_{u_i}\setminus\set{u_i}$ since $R_{G_{u_i}}(u_i,3) \cap D_{u_i} = \emptyset$.
Also, for all $x \in D_{s_j}$ for each $j$, we have $\mathcal{D}_{u,s}(x) \neq s_j$ since $s_j \not\in N_{G_{s_j}}(D_{s_j})$.

As $\set{v}$ is minimal with domination selector $\mathcal{D}_v: \set{v} \to N(v)$ given by $\mathcal{D}_v(v) = s_1$,
setting 
$$
I := N(D_{u,s}) \cap  N(v) = \set{u_1,\dots,u_k}
$$
we have $\mathcal{D}_{u,s}(D_{u,s}) \cap I = \emptyset$.
So, $D$ is minimal by Lemma~\ref{lem. D(v) not in intersection -> union is minimal}.

Since $N(D_{u,s}) \cap N(s_1,\dots,s_m) = \emptyset$, $D'$ is minimal by Corollary~\ref{cor. neighbor disjoint min. sets  -> union is min.}.
Since $m > 1$, $D$ and $D'$ are two minimal RDSs of different sizes, so $T$ is not WTD, as desired.
\end{proof}

%!%

\begin{example}\label{ex. dist 4 leaves}
We demonstrate the construction of two minimal RDSs of different sizes shown in Theorem~\ref{thm. P5 -> Mix}.
Consider the balanced tree in Figure~\ref{distance 4 leaves figure}. 
\begin{figure}[ht]
\centering
\includegraphics{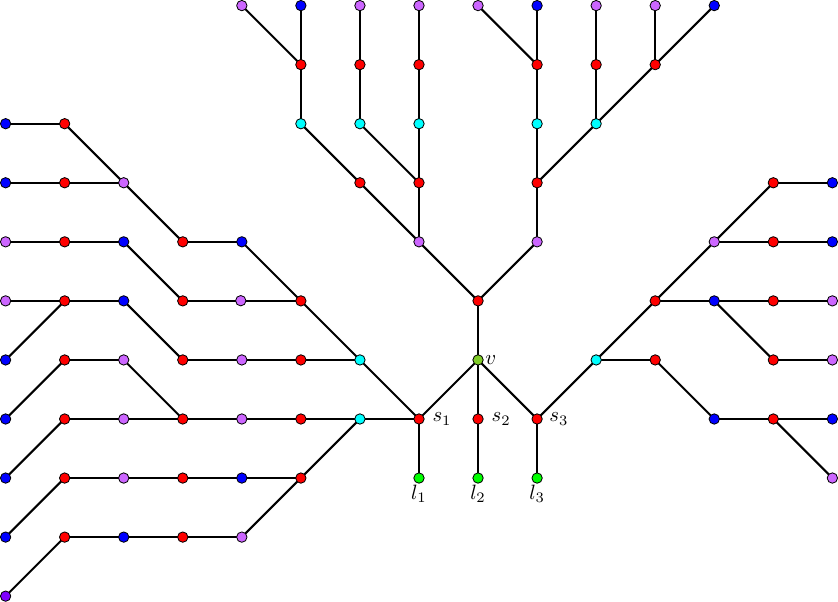}
\caption{Balanced tree with distance 4 apart leaves}
\label{distance 4 leaves figure}
\end{figure}
We use the notation from the proof of Theorem~\ref{thm. P5 -> Mix}.
The \textcolor{cyan}{cyan} vertices represent the even height vertices (blue vertices) that were excluded when we constructed $R_{u_i}$ and $R_{s_i}$.
Let $P$ be the set of all \textcolor{ccwwff}{purple} vertices ($P$ corresponds to the set $D_u \cup D_s$).
Notice that $P$ is minimal, and that $N(P) = V_\R \setminus \{s_1,s_2,s_3\}$.
One can compute that the sets $D := P \cup \set{v}$ and $D' := P \cup \set{l_1,l_2,l_3}$ are minimal RDSs.

Also, one can choose a different purple set (minimal, dominates all red vertices but the $s_i$'s, and does not include any \textcolor{cyan}{cyan} vertices) and see that the union of the new purple set with either $\set{v}$ or $\set{l_1,l_2,l_3}$ forms a minimal RDS; minimality of the union is preserved by not picking the \textcolor{cyan}{cyan} vertices.
\end{example}

The WTD-ness of $P_4$ makes another appearance in our next result.

\begin{lemma}\label{lem. no 4-path iff |N(v) cap V_1| = 1}
    Let $T$ be a balanced tree.
    Then for all leaves $\ell_1,\ell_2 \in V_0$, we have $\dist(\ell_1,\ell_2) \neq 4$ if and only if $|N(v) \cap V_1| = 1$ for all $v \in V_2$.
\end{lemma}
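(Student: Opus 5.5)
We prove both directions by contrapositive, using Lemma~\ref{lem. adjacent, then height differ by 1}: in a balanced tree, adjacent vertices have heights differing by exactly $1$. First we record that every $v \in V_2$ has $|N(v)\cap V_1| \geq 1$. Indeed, by the definition of height there is a leaf $\ell$ with $\dist(v,\ell)=2$. Write $\pth(\ell,v) = (\ell, s, v)$. Then $s$ is adjacent to the leaf $\ell$, so $\hh(s) \leq 1$. Moreover $s$ is not a leaf, since it has two neighbours $\ell$ and $v$ (if $s$ were a leaf, $v$ would be a support vertex and $\hh(v)\le 1$). Hence $s \in V_1\cap N(v)$. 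So the condition $|N(v)\cap V_1| = 1$ is equivalent to $|N(v)\cap V_1| \leq 1$.

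($\Leftarrow$) Suppose there are leaves $\ell_1,\ell_2$ with $\dist(\ell_1,\ell_2)=4$. Write $\pth(\ell_1,\ell_2) = (\ell_1, s_1, v, s_2, \ell_2)$; the vertices are distinct since this is a path.
\begin{itemize}
\item Each $s_i$ is adjacent to a leaf and is not itself a leaf (it has degree at least $2$ on the path), so $\hh(s_i) = 1$.
\item Hence $\hh(v)\leq 2$. Since $v$ is adjacent to $s_1\in V_1$, Lemma~\ref{lem. adjacent, then height differ by 1} gives $\hh(v)\in\{0,2\}$. Also $v$ is not a leaf because it has degree at least $2$, so $\hh(v)=2$.
\end{itemize}
Thus $v\in V_2$ and $s_1,s_2$ are two distinct vertices of $N(v)\cap V_1$, so $|N(v)\cap V_1|\geq 2$.

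($\Rightarrow$) Suppose some $v\in V_2$ has two distinct neighbours $s_1,s_2\in V_1$. Since $\hh(s_i)=1$, each $s_i$ has a leaf neighbour $\ell_i$. These leaves satisfy:
\begin{itemize}
\item $\ell_i \neq v$, because $\hh(v)=2$;
\item $\ell_1\neq\ell_2$, because a leaf adjacent to both $s_1$ and $s_2$ would have degree at least $2$.
\end{itemize}
Therefore $(\ell_1,s_1,v,s_2,\ell_2)$ is a walk of five distinct vertices, hence a path. By uniqueness of paths in trees, $\dist(\ell_1,\ell_2)=4$.

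The only delicate points are the small degenerate cases: checking that the vertices on the paths are distinct, and ruling out $\hh(v)=0$ in the first direction. Both are handled by the degree arguments above, so no step is a serious obstacle.
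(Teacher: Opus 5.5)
Your proof is correct and takes essentially the same approach as the paper, building the path $(\ell_1,s_1,v,s_2,\ell_2)$ in both directions; using uniqueness of paths instead of the paper's ``$\dist<4$ forces $s_1s_2\in E$'' step, and checking explicitly that $\hh(v)=2$, are tidier versions of the same argument. The only unstated distinctness is $\ell_1\neq s_2$ and $\ell_2\neq s_1$, which is immediate because the $\ell_i$ have height $0$ and the $s_i$ have height $1$.
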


\begin{proof}
    ($\Rightarrow$) Assume that $\dist(\ell_1,\ell_2) \neq 4$ for all leaves $\ell_1$, $\ell_2$, and suppose by way of contradiction that there is a vertex $v \in V_2$ such that $|N(v) \cap V_1| \neq 1$.
    By definition of height, we must have $|N_T(v) \cap V_1| \geq 1$.
    So, there are distinct $s_1,s_2 \in N(v) \cap V_1$.
    Then there are leaves $\ell_1,\ell_2 \in V_0$ such that $\ell_1s_1,\ell_2s_2 \in E$.
    This forms a path between $\ell_1$ and $\ell_2$, namely $\ell_1s_1vs_2\ell_2$, so $\dist(\ell_1,\ell_2) \leq 4$, hence $\dist(\ell_1,\ell_2) < 4$ by assumption.
    Since $s_i$ is the unique neighbor of $\ell_i$ for $i = 1,2$, it follows that $s_1$ and $s_2$ are adjacent, contradicting the fact that $T$ is balanced.

    ($\Leftarrow$) Assume that $|N(v) \cap V_1| = 1$ for all $v \in V_2$. 
    Let $\ell_1,\ell_2 \in V_0$ be two leaves.
    Then there are support vertices $s_1,s_2 \in V_1$ such that $\ell_1s_1,\ell_2s_2 \in E$.
    If $s_1 = s_2$, then $\ell_1 s_1\ell_2$ forms a path of length 2 between $\ell_1$ and $\ell_2$.
    Thus by the uniqueness of path between two vertices in a tree, we get $\dist(\ell_1,\ell_2) = 2 \neq 4$.
    Now suppose that $s_1 \neq s_2$.
    Since $T$ is a balanced tree, there is no edge between $s_1$ and $s_2$.
    Thus we must have $\dist(\ell_1, \ell_2) > 3$.
    Assume by way of contradiction that $\dist(\ell_1,\ell_2) = 4$. 
    Then there exists $v \in V_2$ such that $\ell_1 s_1 v s_2 \ell_2$ forms a path of length 4.
    This implies that $s_1,s_2 \in N(v)$, so $|N(v) \cap V_1| \geq 2$, contradicting the assumption that $|N(v) \cap V_1| = 1$ as $v \in V_2$.
    Thus $\dist(\ell_1,\ell_2) \neq 4$.
\end{proof}

Toward our descriptive characterization of WTD balanced trees we next show that all balanced trees of height at least 4 are not WTD.

\begin{theorem}
\label{thm. height >= 4 is not WTD}
Let $T$ be a balanced tree with blue leaves and $\hh(T) \geq 4$. 
Then $T$ has two minimal RDSs of different sizes, and $T$ is not WTD.
\end{theorem}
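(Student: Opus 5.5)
Throughout, $T$ is balanced with blue leaves, so the blue vertices are exactly those of even height and the red vertices those of odd height. By Corollary~\ref{cor. WTD iff RDBD WTD}, it suffices to build two minimal RDSs of different sizes. I would first reduce to a more rigid situation. If two leaves are at distance $4$, Theorem~\ref{thm. P5 -> Mix} already finishes the proof. So assume no two leaves are at distance $4$. Then Lemma~\ref{lem. no 4-path iff |N(v) cap V_1| = 1} says every $v \in V_2$ has a unique neighbor in $V_1$. By Lemma~\ref{lem. LeafAdding} we may also assume every support vertex has exactly one leaf.

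Since $\hh(T)\ge 4$, I would fix a path $\ell\, s\, y\, x\, w$ with $\hh$-values $0,1,2,3,4$. Lemma~\ref{lem. adjacent, then height differ by 1} makes this possible: a vertex of height $4$ has a neighbor of height $3$, and so on down to a leaf. Here $s$ is the unique $V_1$-neighbor of $y$. The other neighbors of $y$ are red vertices $x=u_1,\dots,u_k$ of height $3$. The red neighbors of $w$, namely $x$ together with vertices $r_1,\dots,r_p$ of height $3$ or $5$, satisfy $p\ge 1$ because $w$ is not a leaf.

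The two RDSs should agree outside a small local region around $s,y,x,w$. They should differ inside it as follows. In $D$, the red vertices $s$ and $x$ are both covered by the single blue vertex $y$. In $D'$, $s$ is covered by $\ell$ and $x$ is covered through $w$. Since $w$ also covers the $r_i$, I would cut the tree at the edges incident to $y$ and $w$. I would then use the radar construction from the proof of Theorem~\ref{thm. P5 -> Mix} on each resulting branch ($\br_y(u_i)$ for $i\ge2$, and $\br_w(r_j)$), taking unions of radars $R(\cdot,1)\cup\bigcup_{j\ge1}R(\cdot,3+2j)$ or $\bigcup_{j\ge0} R(\cdot,3+2j)$ as appropriate. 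These give blue sets dominating all red vertices of the branch except possibly its root, which I then shrink to minimal sets.

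The comparison is then between two local completions. One is $\{y\}$ together with, for each $r_j$, a blue neighbor of $r_j$ other than $w$ (chosen inside the branch so that it is already part of the branch-minimal set wherever possible). The other is $\{\ell,w\}$ together with, for each $u_i$ with $i\ge 2$, a blue neighbor of $u_i$ other than $y$. I would choose which branches get ``rooted'' radars and which get ``unrooted'' ones so that the common part $E$ is literally the same set in both. The local parts then have sizes $1$ versus $2$, or more generally they differ by a count that is visibly nonzero, such as $1$ versus $2$ when $k=1$. If needed, I would pick $y$ among the height-$2$ neighbors of $x$, or $x$ among the height-$3$ neighbors of $w$, to arrange $k=1$. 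The existence of height $\ge 4$ is exactly what supplies the vertex $w$ that can replace $y$ in covering $x$; this is where the hypothesis enters.

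Minimality would be checked exactly as in Theorem~\ref{thm. P5 -> Mix}. Branch pieces with disjoint neighborhoods combine by Corollary~\ref{cor. neighbor disjoint min. sets  -> union is min.}. The local vertices are attached via Lemma~\ref{lem. D(v) not in intersection -> union is minimal}, using the selectors $\mathcal{D}(y)=s$, $\mathcal{D}(\ell)=s$ and $\mathcal{D}(w)=x$. This requires that the branch selectors avoid the roots $u_i$ and $r_j$, which is ensured by omitting the radar at distance $3$ as in that proof.

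The main obstacle is the bookkeeping in the second paragraph. I must guarantee that the common part $E$ is the same set in both constructions and that neither local completion makes some vertex of $E$ redundant; otherwise minimal subsets could shrink by different amounts. I would handle this first by trying to reduce to $k=1$ and $p=1$ via the choice of $y,x,w$, and otherwise by treating the extra $u_i$ and $r_j$ with the rooted/unrooted radar choice as in Theorem~\ref{thm. P5 -> Mix}.
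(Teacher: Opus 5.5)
Your local swap is the one the paper uses. The paper's $x$, $w_0$, $v$ are your $y$, $x$, $w$, and in your notation it compares $Z\cup\{y\}$ with $Z\cup\{\ell,w\}$. The gap is in the bookkeeping you yourself flag as the main obstacle: it is left unresolved, and the version you actually write down does not work, for four reasons.

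\textbf{The sizes need not differ.} Suppose the branches at the $r_j$ and at $u_2,\dots,u_k$ are ``unrooted'', and you add a blue neighbor of each $r_j$ on the $y$-side and of each $u_i$ on the $w$-side. The two sets then have sizes $|E|+1+p$ and $|E|+1+k$. These coincide whenever $k=p$, in particular in the case $k=p=1$ you try to reduce to. So ``$1$ versus $2$'' does not follow from this scheme.

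\textbf{The added vertices can break the common part $E$.} On the path $\ell\,s\,y\,x\,w\,x_2\,y_2\,s_2\,\ell_2$, the unrooted set in the branch of $r_1=x_2$ is $\{\ell_2\}$. The only admissible neighbor of $x_2$ is $y_2$, and adding it makes $\ell_2$ redundant on the $y$-side.

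\textbf{The fallback reduction to $k=1$ is unavailable in general.} In that example, attach a pendant path $y\,x'\,y''\,s''\,\ell''$ at $y$ and a similar one at $y_2$. Then $w$ is the only height-$4$ vertex and no two leaves are at distance $4$. Every admissible choice of $y$ (namely $y$ or $y_2$) has two height-$3$ neighbors.

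\textbf{Two components are never treated.} Cutting at the edges of $y$ and $w$ also produces the component of $s$ and the component of $x$, and you do not say how they are handled.

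The missing decision is to make every such branch rooted, independently of $k$ and $p$. Use $R(\cdot,1)\cup\bigcup_{j\ge1}R(\cdot,3+2j)$ in the branches at $u_2,\dots,u_k$ and at $r_1,\dots,r_p$. Use unrooted radars $\bigcup_{j\ge0}R(\cdot,3+2j)$ in the component of $s$ (without $\ell$) and in the component of $x$ (without $y,w$). The resulting minimal set $E$ has $N(E)=V_\R\setminus\{s,x\}$. Both completions are then just $\{y\}$ and $\{\ell,w\}$, and the sizes differ by exactly one.

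For minimality, note that $N(E)\cap N(y)=\{u_2,\dots,u_k\}$ and $N(E)\cap N(\{\ell,w\})=\{r_1,\dots,r_p\}$. The radar at distance $3$ is omitted, and no neighbor of a $u_i$ or $r_j$ is a leaf, since all such neighbors have height at least $2$. Hence a selector for $E$ can send each vertex at distance $1$ from a root to one of its children at distance $2$, avoiding both intersections at once. Lemma~\ref{lem. D(v) not in intersection -> union is minimal} then applies with $\mathcal{D}(y)=s$, $\mathcal{D}(\ell)=s$, $\mathcal{D}(w)=x$. With this fixed choice your argument becomes the paper's proof.
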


\begin{proof}
Suppose by way of contradiction that all minimal RDSs of $T$ have the same size.
Since $\hei(T) \geq 4$, there is some vertex $v$ of height $4$.
Hence there are vertices $w_0,x,s,\ell$ where $\ell$ is a leaf such that $\pth(v,\ell)= (v,w_0,x,s,\ell)$.
By Corollary~\ref{cor. WTD iff RDBD WTD}, Theorem~\ref{thm. P5 -> Mix}, and Lemma~\ref{lem. no 4-path iff |N(v) cap V_1| = 1}, for all $a \in V_2(T)$, we have $|N(a) \cap V_1(T)| = 1$.
 Let $w_0,w_1,w_2,\cdots,w_k$ be the distinct neighbors of $v$.
Choose $v$ to be the root of $T$.
For each $i= 1,\dots, k$, let $G_i$ be the subgraph induced by $\br_v(w_i) =: B_i$, and let $G_0$ to be the subgraph induced by $B_0 := \br_v(w_0)\setminus \{x,s ,\ell\}$.

\begin{figure}[ht]
\centering
\includegraphics[scale = 1.25]{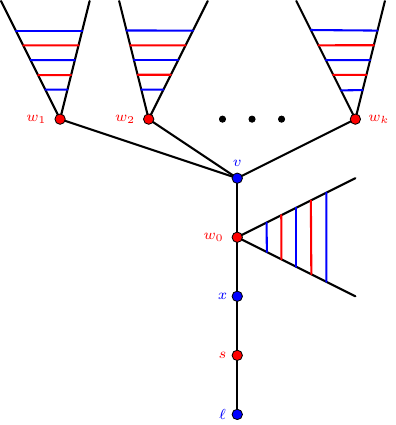}
\caption{Tree $T$ with branches $B_i$}
\label{fig. h geq 4 then not WTD 1}
\end{figure}

For each $i = 1,\dots, k$, consider the union of radars 
$$
R_i := R_{G_i}(w_i,1) \cup \left(\bigcup_{j = 1}^{\infty} R_{G_i}(w_i, 3 + 2j)\right)
$$ 
and 
$$
R_0 := R_{G_0}(w_0,3) \cup \left(\bigcup_{j = 1}^{\infty} R_{G_0}(w_0,5 + 2j)\right).
$$ 

Next, notice that each $R_i$ with $i > 0$ is an RDS of $G_i$, and $R_0$ dominates all red vertices in $G_0$ but $w_0$.
Since $R_i$ is an RDS in $G_i$ for $i > 0$, there exists a minimal RDS $D_i \subseteq R_i$ for each $i > 0$.
Also, there is a minimal set $D_0 \subseteq R_0$ which dominates all red vertices in $G_0$ but $w_0$.
Set $D_w := \cup_{i = 0}^k D_i$.

Next, let $G_x$ and $G_s$ be the subgraphs of $G$ induced by the sets $\br_v(x) \setminus \br_v(s)$ and $\br_v(s) \setminus \{\ell\}$, respectively (see Figure~\ref{fig. h geq 4 then not WTD 2}) and set
$$
R_x := R_{G_x}(x,2) \cup \left(\bigcup_{i = 0}^{\infty}R_{G_x}(v,6+2i)\right) \ \ \ \text{ and }\ \ \ R_s := \bigcup_{i = 0}^{\infty} R_{G_s}(y,3 + 2i).
$$ 

\begin{figure}[ht]
\centering
\includegraphics[scale = 1.25]{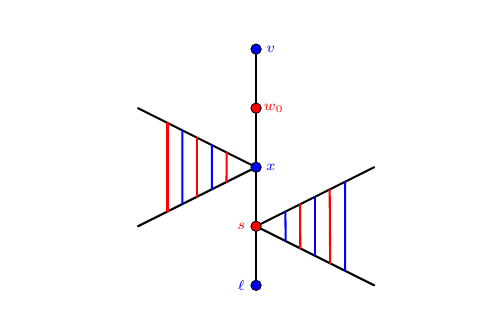}
\caption{Branches $B_x$ and $B_s$ in $T$}
\label{fig. h geq 4 then not WTD 2}
\end{figure}

Then $R_x$ dominates all red vertices in $G_x$, and $R_s$ dominates all red vertices in $G_s$ but $s$.
Let $D_x \subseteq R_x$ and $D_s \subseteq R_s$ be minimal sets of $R_x$ and $R_s$.
We claim that the sets 
$$
D := D_w \cup D_x \cup D_y \cup \{v,\ell\}
$$ 
and 
$$
D' := D_w \cup D_x \cup D_y \cup \{x\}
$$ 
are minimal RDSs in $T$.
Since $N_T( D_w \cup D_x \cup D_s) = V_\R(T) \setminus \{w_0,s\}$, $N_T(\{v,\ell\}) \supseteq \{w_0,s\}$, and $N_T(\{x\}) \supseteq \{w_0,s\}$, the sets $D$ and $D'$ are RDSs in $T$.

Now we show that $D$ and $D'$ are minimal.
First note that the open neighborhoods of $D_w$, $D_x$, and $D_y$ are all mutually disjoint.
Hence by Corollary~\ref{cor. neighbor disjoint min. sets  -> union is min.}, the set $Z := D_w \cup D_x \cup D_y$ is minimal in $T$.
Thus there exists a domination selector $\mathcal{D}_Z:Z \to N_T(Z)$.
Set $W := \set{w_0,\dots,w_k}$.
For each $i = 1, \dots,k$, we may choose $\mathcal{D}_Z(D_{w_i})$ so that $\mathcal{D}_Z(D_{w_i}) \cap W = \emptyset$ since $R_{G_{i}}(w_i, 3) \cap D_{w_i} = \emptyset$.
Also, as $w_0 \not\in N_T(D_{w_0})$, we have $\mathcal{D}_Z(D_w) \cap W = \emptyset$.
The set $\set{v,\ell}$ is minimal with domination selector $\mathcal{D}_{v\ell}:\set{v,\ell} \to N_T(\set{v,\ell})$ given by
$$
\mathcal{D}_{v\ell}(v') := \begin{cases}
    w_0 & v' = v\\
    s & v' = \ell\ .
\end{cases}
$$
Since we have $N_T(Z) \cap N_T(\set{v,\ell}) = W\setminus \set{w_0}$, the conditions $\mathcal{D}_Z(Z) \cap (W \setminus \set{w_0}) = \emptyset$ and $\mathcal{D}_{v\ell}(\set{v,\ell}) \cap (W\setminus \set{w_0}) = \emptyset$ show that $D$ is minimal by Lemma~\ref{lem. D(v) not in intersection -> union is minimal}.

To show that $D'$ is minimal, we choose $\mathcal{D}_Z$ such that for each $\alpha \in D_x \cap R_{G_x}(x,2)$, pick $\beta \in N_{G_x}(\alpha) \cap R_{G_x}(x,3)$ and set $\mathcal{D}_Z(\alpha) = \beta$ (which is possible since $D_x \cap R_{G_x}(x,4) = \emptyset$). 
This choice of $\mathcal{D}_Z$ gives us $\mathcal{D}_Z(D_x) \cap R_{G_x}(x,1) = \emptyset$.
Note that $\set{x}$ is minimal with domination selector $\mathcal{D}_x:\set{x} \to N_T(x)$ given by $\mathcal{D}_x(x) = w_0$.
Since $N_T(Z) \cap N_T(x) \subseteq R_{G_x}(x,1)$, $\mathcal{D}_Z(Z) \cap R_{G_x}(x,1) = \emptyset$, and $\mathcal{D}_x(\set{x}) \cap R_{G_x}(x,1) = \emptyset$, $D'$ is minimal by Lemma~\ref{lem. D(v) not in intersection -> union is minimal}.
Since $|D| > |D'|$, the tree $T$ has two minimal RDSs of difference sizes as desired.
This contradicts our initial supposition.
Finally, Corollary~\ref{cor. WTD iff RDBD WTD} implies that $T$ is not WTD.
\end{proof}

\begin{example}
\label{depth 4 implies not WTD example}
Consider the tree $T$ in Figure~\ref{depth 4 implies not WTD figure}.
\begin{figure}[ht]
\centering
\includegraphics[scale = 1]{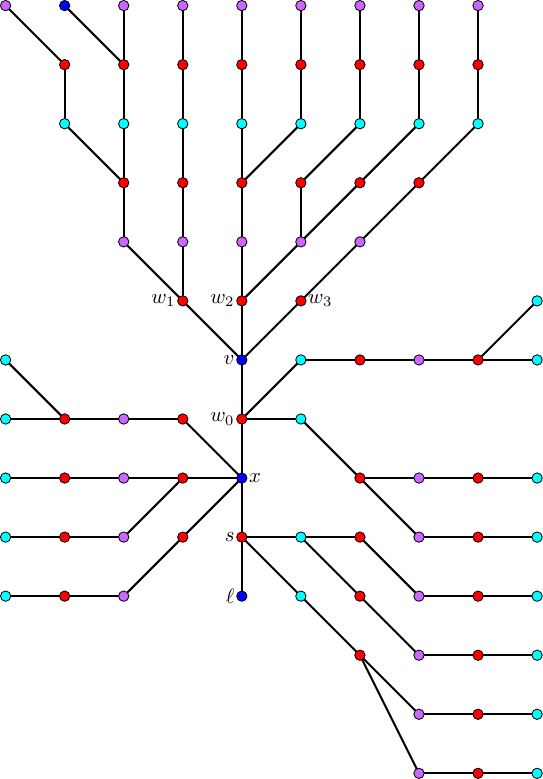}
\caption{a balanced tree $T$ of height greater than 3}
\label{depth 4 implies not WTD figure}
\end{figure}
We show that there are two minimal RDSs of different sizes in $T$.
We use the notation as in Theorem~\ref{thm. height >= 4 is not WTD}.
Let $Z$ be the set of all \textcolor{cczzff}{purple} vertices ($Z = D_w \cup D_x \cup D_s$).
The \textcolor{cyan}{cyan} vertices represent the blue vertices (even height vertices) excluded when we construct $R_x,R_s$, and $R_i$ for $i= 1,2,3$ in this example. 
Then the sets $D := Z \cup \set{v,\ell}$ and $D' := Z \cup \set{x}$ form minimal RDSs.
\end{example}

With Theorem~\ref{thm. height >= 4 is not WTD}, our next result shows that an WTD balanced tree must have height 0,1, or 3.

\begin{theorem}
\label{thm. height(T)=2 -> not WTD}
Let $T$ be a balanced tree with $\hei(T) = 2$.
Then $T$ has two leaves of distance 4 apart, hence $T$ is not WTD.
\end{theorem}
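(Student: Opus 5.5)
By Lemma~\ref{lem. no 4-path iff |N(v) cap V_1| = 1}, $T$ has two leaves at distance $4$ exactly when some $v \in V_2$ has $|N(v) \cap V_1| \geq 2$. So the plan is to find such a vertex $v$. Once we have two leaves at distance $4$, Theorem~\ref{thm. P5 -> Mix} shows that $T$ is not WTD.

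First, since $\hh(T) = 2$, the set $V_2$ is nonempty; pick any $v \in V_2$. Because $T$ is balanced, Lemma~\ref{lem. adjacent, then height differ by 1} says every neighbor of $v$ has height $1$ or $3$. Since $\hh(T) = 2$, there are no vertices of height $3$, so $N(v) \subseteq V_1$.

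Next we count neighbors. Vertex $v$ is not a leaf, since its height is $2 > 0$. If $\deg(v) = 1$, then $v$ would be a leaf with height $0$, a contradiction, so $\deg(v) \geq 2$. Hence $|N(v) \cap V_1| = \deg(v) \geq 2$.

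Applying Lemma~\ref{lem. no 4-path iff |N(v) cap V_1| = 1} in the ($\Leftarrow$) direction, by contraposition, gives leaves $\ell_1, \ell_2$ with $\dist(\ell_1, \ell_2) = 4$. Concretely, $\ell_1 s_1 v s_2 \ell_2$ is such a path, where $s_1, s_2 \in N(v)$ are distinct support vertices with attached leaves $\ell_1, \ell_2$. These leaves are distinct because $s_1 \neq s_2$, and the path is unique because $T$ is a tree. Theorem~\ref{thm. P5 -> Mix} then completes the proof.

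There is no real obstacle here. The only points to check are that $v$ cannot be a leaf and that every neighbor of $v$ lies in $V_1$; both follow immediately from the balanced-tree lemma and $\hh(T) = 2$.
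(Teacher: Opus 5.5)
Your proof is correct and matches the paper's: take $v \in V_2$, note that all of its neighbors (at least two of them) are support vertices, form the length-4 path $\ell_1 s_1 v s_2 \ell_2$ between two leaves, and invoke Theorem~\ref{thm. P5 -> Mix}. One small slip: the implication you need from Lemma~\ref{lem. no 4-path iff |N(v) cap V_1| = 1} is the contrapositive of its ($\Rightarrow$) direction, not its ($\Leftarrow$) direction; your explicit path argument already covers this, so the citation is unnecessary anyway.
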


\begin{proof}
Since $\hei(T) = 2$, there exists a vertex $v \in V_2$. 
Then there is a leaf $l \in V_0$ and a support vertex $s \in V_1$ such that $\pth(v,l) = (v,s,l)$.
Since $\hei(v) = 2 = \hh(T)$, the vertex $v$ is only adjacent to height 1 vertices.
Also, the degree of $v$ must be at least 2 (else $v$ is a leaf).
So, there is a vertex $s' \neq s$ of height $1$ where $vs' \in E(V)$.
Since $\hh(s') = 1$, there is a leaf $l' \neq l$ which is adjacent to $s'$.
Since $\pth(l,l') = (l,s,v,s',l')$ has length 4, we have $\dist(l,l') = 4$. 
Therefore, by Theorem~\ref{thm. P5 -> Mix}, the tree $T$ is not WTD.
\end{proof}

\begin{theorem}
\label{thm. |N(s) cap V_2| > 1 -> not WTD}
Let $T$ be a balanced tree with blue leaves.
If there is a support vertex $s \in V_1$ such that $|N(s) \cap V_2| \geq 2$, then $T$ has two minimal RDSs of different sizes, hence $T$ is not WTD.
\end{theorem}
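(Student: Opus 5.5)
We construct two minimal RDSs of $T$ of different sizes, following the template of Theorems~\ref{thm. P5 -> Mix} and~\ref{thm. height >= 4 is not WTD}, and then conclude with Corollary~\ref{cor. WTD iff RDBD WTD}. Blue vertices have even height and red vertices have odd height, so $s$ is red and its height-$2$ neighbors are blue.

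\textbf{Preliminary reductions.} By Lemma~\ref{lem. LeafAdding} we may assume that $s$ has a unique leaf $\ell$. By Theorem~\ref{thm. P5 -> Mix} and Lemma~\ref{lem. no 4-path iff |N(v) cap V_1| = 1} we may assume that every $a\in V_2$ has exactly one neighbor in $V_1$. Otherwise $T$ is already not WTD, and the proof of Theorem~\ref{thm. P5 -> Mix} produces two minimal RDSs of different sizes.

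\textbf{Setup.} Let $v_1,\dots,v_m$ ($m\ge 2$) be the neighbors of $s$ in $V_2$. The only neighbor of each $v_i$ in $V_1$ is $s$, so every other neighbor of $v_i$ has height $3$. Root $T$ at $s$ and split it into branches.
\begin{itemize}
\item For each $v_i$ with children $u_{i1},\dots,u_{ik_i}$ (all of height $3$, hence red), consider the branches $\br_{v_i}(u_{ij})$. In each, take a minimal RDS inside the radar union $R(u_{ij},1)\cup\bigcup_{j'\ge1}R(u_{ij},3+2j')$, exactly as the sets $D_{u_i}$ in Theorem~\ref{thm. P5 -> Mix}.
\item Any other neighbors of $s$ are leaves, and there are none besides $\ell$.
\end{itemize}
Let $Z$ be the union of these minimal sets. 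By Corollary~\ref{cor. neighbor disjoint min. sets  -> union is min.}, $Z$ is minimal. It dominates every red vertex except $s$, since the $v_i$ are blue and need no domination. Because $Z$ avoids the radars at distance $3$ from each $u_{ij}$, its domination selector can be chosen to avoid the $u_{ij}$.

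\textbf{The two RDSs.} Set
\[
D:=Z\cup\{\ell\},\qquad D':=Z\cup\{v_1,\dots,v_m\}.
\]
Both are RDSs, since $s\in N(\ell)$ and $s\in N(v_i)$.
\begin{itemize}
\item For $D$, the neighborhood $N(\ell)=\{s\}$ is disjoint from $N(Z)$, so $D$ is minimal by Corollary~\ref{cor. neighbor disjoint min. sets  -> union is min.}.
\item For $D'$, the set $\{v_1,\dots,v_m\}$ is \emph{not} minimal on its own, because every $v_i$ dominates $s$. So $D'$ is likely not minimal, and the plan must change. Instead let $D'':=Z\cup\{v_1\}$. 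This is still an RDS and has the same size as $D$, so it does not help either.
\end{itemize}

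\textbf{Revised construction.} The fix is to use the extra height-$2$ neighbor so that two of the $v_i$ each get their own private red vertex to dominate. We thin $Z$ near $v_1$ and $v_2$: in each branch $\br_{v_i}(u_{ij})$ for $i=1,2$, replace $R(u_{ij},1)$ by radars starting at distance $3$ (the $R_0$ of Theorem~\ref{thm. height >= 4 is not WTD}). The modified $Z'$ then leaves the $u_{1j}$ and $u_{2j}$ undominated.

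Compare the two candidates
\[
Z'\cup\{v_1,v_2\}
\qquad\text{and}\qquad
Z'\cup\{\ell\}\cup\{\text{a child of each } u_{1j},u_{2j}\}.
\]
In the first, $v_1$ and $v_2$ dominate $s$ together with their own children $u_{1j}$ and $u_{2j}$, which serve as private vertices. The second set is larger whenever $k_1+k_2\ge2$. That condition holds because a height-$2$ vertex $v_i$ has degree at least $2$, so $k_i\ge1$.

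\textbf{Main obstacle.} The delicate point is to check minimality of the second set via Lemma~\ref{lem. D(v) not in intersection -> union is minimal}. The chosen children of the $u_{ij}$ (height $4$ or $2$) must have selectors avoiding the overlap with $N(Z')$, which is handled as in the $D_x$ argument of Theorem~\ref{thm. height >= 4 is not WTD}. One must also confirm that $Z'$ still dominates everything deeper in those branches.
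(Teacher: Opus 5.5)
\textbf{There is a genuine gap, at exactly the step you call the main obstacle.} Your second set is not minimal in general, and once it is pruned your size count no longer holds.

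For $Z'\cup\{v_1,v_2\}$ to be an RDS, $Z'$ must dominate every red vertex other than $s$ and the $u_{ij}$. In particular it dominates the children of each chosen $c_{ij}$. But $c_{ij}$ dominates those vertices too, so any element of $Z'$ whose only private neighbor lies there becomes redundant.

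This already happens in the smallest tree satisfying the hypotheses and your reductions, namely the $12$-vertex tree $T'$ of Figure~\ref{Figure3}, with $u_{11}=u_1$, $u_{21}=u_2$ and $k_1=k_2=1$:
\begin{itemize}
\item There $Z'=\{l_1,l_2\}$.
\item The first set $\{l_1,l_2,v_1,v_2\}$ is indeed a minimal RDS.
\item In $\{l_1,l_2,\ell,v_1',v_2'\}$, however, $l_1$ and $l_2$ are redundant, because $v_i'$ already dominates $s_i$.
\item The minimal RDS contained in it is $\{\ell,v_1',v_2'\}$, which is \emph{smaller} than the first set.
\end{itemize}
So the inequality $|Z'|+1+k_1+k_2>|Z'|+2$ rests on a non-minimal set and fails in this example. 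In general you have no control over how many elements of $Z'$ must be discarded, so no size comparison follows. Citing the $D_x$ argument of Theorem~\ref{thm. height >= 4 is not WTD} does not repair this. The problem is not the selectors of the $c_{ij}$ (their private neighbors $u_{ij}$ are fine). It is the selectors of $Z'$, and a leaf of $Z'$ sitting under a child of $c_{ij}$ simply has no alternative.

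\textbf{What is missing is a common part that interacts with neither gadget, plus the right trade-off.} The paper proceeds as follows.
\begin{enumerate}
\item It reduces to $\hh(T)=3$ via Theorems~\ref{thm. height >= 4 is not WTD} and~\ref{thm. height(T)=2 -> not WTD}. You never do this, so the branches below your $u_{ij}$ are uncontrolled.
\item It uses only one height-$3$ neighbor $u_i$ of each $v_i$, together with a path $(u_i,v_i',s_i,l_i)$, forming the subtree $T'$.
\item It chooses a single minimal $D$ inside the set $U$, which excludes $X_0$, $Y_2$ and all neighbors of $V_1(T')\cup V_3(T')$. As a result $N(D)$ misses the red vertices of $T'$, and the selectors of $D$ can be taken to avoid $X_3$.
\item Then $D\cup\{l_1,l_2,v_1,v_2\}$ and $D\cup\{\ell,v_1',v_2'\}$ are both minimal by Lemma~\ref{lem. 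D(v) not in intersection -> union is minimal}, with sizes $|D|+4$ and $|D|+3$.
\end{enumerate}
The saving comes from each $v_i'$ covering both $u_i$ and $s_i$ at once. Your construction does not account for this interaction, and it makes the second configuration the smaller one, not the larger.
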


\begin{proof}
    If $\hh(T) \geq 4$, then $T$ has two minimal RDSs of different sizes by Theorem~\ref{thm. height >= 4 is not WTD}.
    So we assume that $\hh(T) \leq 3$.
    Since $V_2 \neq \emptyset$ by assumption, $\hh(T) \geq 2$.
    If $\hh(T) = 2$, then $T$ has two minimal RDSs of different sizes and is not WTD by Corollary~\ref{cor. WTD iff RDBD WTD} and Thoerem~\ref{thm. height(T)=2 -> not WTD}.
    Hence we assume that $\hh(T) = 3$.
    
    If there is a vertex $x \in V_2$ with $|N(x) \cap V_1| \geq 2$, then $T$ has two minimal RDSs of different sizes and is not WTD by Corollary~\ref{cor. WTD iff RDBD WTD}, Theorem~\ref{thm. P5 -> Mix}, and Lemma~\ref{lem. no 4-path iff |N(v) cap V_1| = 1}.
    Thus we also assume that for all $x \in V_2$, we have $|N(x) \cap V_1| = 1$.

    Let $l$ be a leaf adjacent to $s$.
    Since $|N(s) \cap V_2| \geq 2$, there are two distinct vertices $v_1,v_2$ in $N(s) \cap V_2$.
    Since $\hh(v_1) = \hh(v_2) = 2$, $v_1$ and $v_2$ are not leaves.
    Hence $\deg(v_1)$ and $\deg(v_2)$ are at least 2.
    By assumption, we have $|N(v_i) \cap V_1| = 1$ for $i = 1,2$.
    Thus $v_i$ must be adjacent to a height 3 vertex $u_i$ for $i= 1,2$.
    Similarly, since $u_1$ and $u_2$ are not leaves, there are distinct vertices $v_1',v_2' \in V_2$, distinct support vertices $s_1,s_2 \in V_1$, and distinct leaves $l_1,l_2 \in V_0$ such that $\pth(u_i,l_i) = (u_i,v_i',s_i,l_i)$ for $i = 1,2$.
    Let $T'$ be the subgraph of $T$ induced by the set $\set{l,l_1,l_2,s,s_1,s_2,v_1,v_1',v_2,v_2',u_1,u_2}$  (Figure~\ref{Figure3}).
    \begin{figure}[ht]
    \centering
    \includegraphics{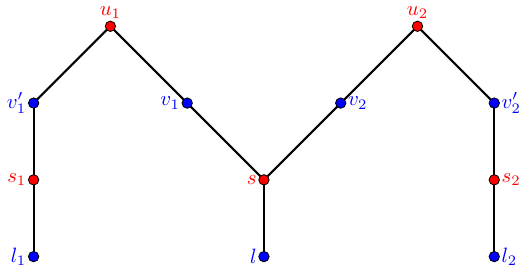}
    \caption{Induced subgraph $T'$ of $T$}
    \label{Figure3}
    \end{figure}
    
    Next, we construct a set dominating every red (odd height) vertex that is not in $T'$.
    Set
    \begin{align*}
        &X_3 := (N_T(V_2(T')) \cap V_3(T)) \setminus V_3(T') & & X_2 := (N_T(X_3) \cap V_2(T)) \setminus V_2(T')\\
        &X_1 := (N_T(X_2) \cap V_1(T)) \setminus V_1(T') & & X_0 := (N_T(X_1) \cap V_0(T))\setminus V_0(T')\\ 
        &Y_2 := (N_T(X_1) \cap V_2(T)) \setminus (X_2 \cup V_2(T')) &&
    \end{align*}
 see Figure~\ref{fig. support adj to V2 1}. 
 We claim that the set 
    $$
    U := (V_0(T) \cup V_2(T)) \setminus (X_0 \cup Y_2 \cup N_T(V_1(T') \cup V_3(T')))
    $$ 
    has $N_T(U) = (V_1(T) \cup V_3(T)) \setminus (V_1(T') \cup V_3(T'))$.
    \begin{figure}[ht]
    \centering
    \includegraphics[scale = 0.6]{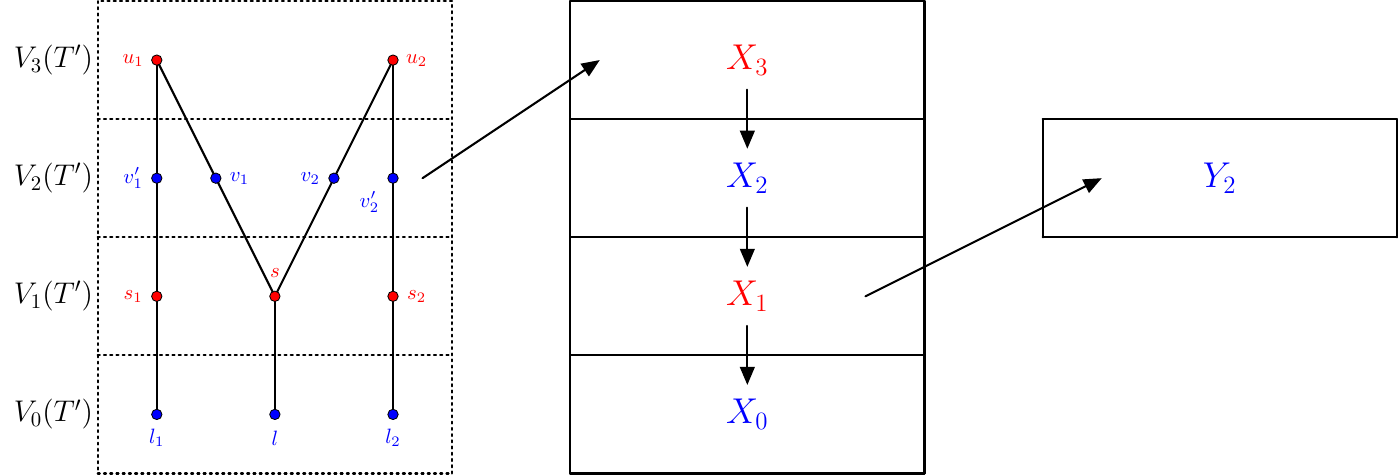}
    \caption{Vertex sets $X_3$, $X_2$, $X_1$, $X_0$, and $Y_2$}
    \label{fig. support adj to V2 1}
    (Arrows indicate the adjacency relation between the sets; for instance, $Y_2 \subseteq N_T(X_1)$)
    \end{figure}
    
    The containment $N_T(U) \subseteq (V_1(T) \cup V_3(T)) \setminus (V_1(T') \cup V_3(T'))$ is straightforward since $U \subseteq V_0(T) \cup V_2(T)$ implies $N_T(U) \subseteq V_1(T) \cup V_3(T)$ because $T$ is a balanced tree, and $N_T(V_1(T') \cup V_3(T')) \cap U = \emptyset$ implies $N_T(U) \cap (V_1(T') \cup V_3(T')) = \emptyset$.

    Before we prove the reverse containment, we show that $X_2 \subseteq U$.
    Since $X_2 \subseteq V_2(T)$, we need to show that $X_2 \cap (X_0 \cup Y_2 \cup N_T(V_1(T') \cup V_3(T'))) = \emptyset$.
    By the constructions of $X_0$ and $Y_2$, we get $X_2 \cap (X_0 \cup Y_2) = \emptyset$.
    To show that $X_2 \cap N_T(V_1(T') \cup V_3(T')) = \emptyset$, assume by way of contradiction that there is a vertex $c \in X_2 \cap N_T(V_1(T') \cup V_3(T'))$.
    We construct a cycle containing the vertex $c$, contradicting that $T$ is a tree.
    First, suppose that $c \in N_T(V_1(T'))$.
    Then there is a vertex $s' \in V_1(T')$ that is adjacent to $c$.
    By construction of $X_2$ and $X_3$, there are vertices $x_3 \in X_3$ and $v'$ in $V_2(T')$ such that $cx_3 \in E(T)$ and $v'x_3 \in E(T)$.
    Since $T'$ is a tree and $s'$ and $v'$ are vertices in $T'$, there is a path $P$ in $T'$ from $v'$ to $s'$.
    Thus the paths $P$ and $(s',c,x_3,v')$ form a cycle in $T$.
    Similarly, we can construct a cycle when $c \in N_T(V_3(T'))$.

    Now we show that $N_T(U) \supseteq (V_1(T) \cup V_3(T)) \setminus (V_1(T') \cup V_3(T'))$.
    Let $x \in (V_1(T) \cup V_3(T))$ such that $x \not\in (V_1(T') \cup V_3(T'))$.
    We show that there is a vertex in $U$ that is adjacent to $x$.
    We have three cases to consider.

    \emph{Case 1.} $x \in X_3 \cup X_1$.
    If $x \in X_1$, then there is a vertex $x_2 \in X_2 \subseteq U$ that is adjacent to $x$.
    If $x \in X_3$, then $\deg(x) \geq 2$ as $x$ is not a leaf.
    Hence $x$ is adjacent to at least 2 vertices in $V_2(T)$.
    By construction of $X_3$, $x$ is adjacent to a vertex in $V_2(T')$.
    Notice that $x$ cannot be adjacent to 2 vertices in $V_2(T')$ since this will form a cycle in $T$.
    Thus $x$ must be adjacent to a vertex $y \in V_2(T)$ such that $y \not\in V_2(T')$.
    By construction of $X_2$, $y \in X_2 \subseteq U$ as desired.

    \emph{Case 2.} $x \in V_1(T)\setminus X_1$. Then there is a leaf $l_x \in V_1(T)$ that is adjacent to $x$.
    Since $x \not\in X_1$, $l_x \not\in X_0$ as $x$ is the only vertex $l_x$ is adjacent to.
    We also have $l_x \not\in Y_2 \cup N_T(V_3(T'))$ since vertices in $Y_2 \cup N_T(V_3(T'))$ have height 2.
    Finally, $l_x \not\in N_T(V_1(T'))$ since we assumed that $x \not\in V_1(T')$.
    Thus we have $l_x \in U$ as desired.
    
    \emph{Case 3.} $x \in V_3(T)\setminus X_3$.
    Then $x$ is not a leaf, hence $\deg_T(x) \geq 2$. 
    Also, as $\hh(T) = 3=\hh(x) = 3$ and $T$ is balanced, $x$ is adjacent to at least two height 2 vertices $t_1,t_2 \in V_2(T)$.
    To show that at least one of $t_1$ or $t_2$ is in $U$, assume by way of contradiction that $t_1,t_2 \not\in U$.
    Then $t_1,t_2 \in X_0 \cup Y_2 \cup N_T(V_1(T') \cup V_3(T'))$ by definition of $U$.
    Since height of $t_1$ and $t_2$ is 2, we have $t_1,t_2 \in Y_2 \cup N_T(V_1(T') \cup V_3(T'))$.
    As in the proof of $X_2 \subseteq U$ there is a path from $t_1$ to $t_2$ only using the vertices in $V(T') \cup \left(\bigcup_{i = 1}^3X_i\right) \cup Y_2 \cup N_T(V_1(T') \cup V_3(T'))$. 
    But $(t_1, x, t_2)$ is also a path from $t_1$ to $t_2$, with $x$ not belonging to $V(T') \cup \left(\bigcup_{i = 1}^3X_i\right) \cup Y_2 \cup N_T(V_1(T') \cup V_3(T'))$.
    Thus we have a cycle in $T$, a contradiction.

    Therefore, we have $N_T(U) = (V_1(T) \cup V_3(T)) \setminus (V_1(T') \cup V_3(T'))$ as desired.

    Now let $D \subseteq U$ be a minimal set of $U$.
    Let $D_1 := \set{l_1,l_2,v_1,v_2}$ and $D_2 := \set{l,v_1',v_2'}$.
    We claim that the sets $D \cup D_1$ and $D \cup D_2$ are minimal RDSs.
    For $i = 1,2$, since $T$ is balanced and $N_T(D_i) \supseteq \set{u_1,u_2,s,s_1,s_2} = V_1(T') \cup V_3(T')$, we get $N_T(D \cup D_i) = V_1(T) \cup V_3(T)$.

    To show the minimality of $D \cup D_1$ and $D \cup D_2$, define domination selectors for $D_1$ and $D_2$
    $$
    \calD_1(v) := \begin{cases}
    s_1 & v = l_1\\
    s_2 & v = l_2\\
    u_1 & v = v_1\\
    u_2 & v = v_2
    \end{cases}
    \qquad
    \calD_2(v) := \begin{cases}
        s_1 & v = v_1'\\
        s_2 & v = v_2'\\
        s & v = l\ \ \ .
    \end{cases}
    $$

    For $D$, we claim that there is a domination selector $\calD: D \to N_T(D)$ such that for all $x \in D \cap X_2$, we have $\calD(x) \in X_1$.
    To this end, we need to show that for all $x \in D \cap X_2$, there is a vertex $y \in X_1$ such that $N_T(y) \cap D = \set{x}$.
    So, assume by way of contradiction that there is a vertex $x \in D \cap X_2$ such that for all $y\in X_1$, we have $N_T(y) \cap D \neq \set{x}$.
    Since $\hh(x) = 2$, there is a support vertex $s' \in V_1(T)$ that is adjacent to $x$.
    By construction of $X_2$, either $s' \in X_1$ or $s' \in V_1(T')$.
    To show that $s' \in X_1$, assume by way of contradiction that $s' \in V_1(T)'$.
    Then one can show that this assumption creates a cycle containing $s'$ and $x$ using the vertices in $V(T') \cup X_2 \cup X_3$, a contradiction.
    Thus $s' \in X_1$.
    Since $N_T(s') \cap D \neq \set{x}$ but $x \in N_T(s') \cap D$, there is a vertex $x' \in D \cap X_2$ such that $N_T(s') \cap D \supseteq \set{x,x'}$.
    By the construction of $X_2$ and $X_3$, there is a path $P$ from $x$ to $x'$ only using the vertices in $V(T') \cup X_3$.
    On the other hand, $(x,s',x')$ is a path from $x$ to $x'$ which is not $P$ as $s' \not \in V(T') \cup X_3$.
    Hence we have a cycle in $T$, a contradiction, and the claim is established.

    Furthermore, since the only vertices in $U$ that are adjacent to $X_3$ are in $X_2$, we get $\calD(D) \cap X_3 = \emptyset$ as $\calD(x) \in X_1$ for all $x \in D \cap X_2$.

    Now consider $N_T(D) \cap D_i$ for $i = 1,2$.
    By the above computation of $N_T(U) = N_T(D)$, we have $N_T(D) \cap D_i =\emptyset$ for $i = 1,2$.
    Since $\calD_i(D_i) \cap X_3 = \emptyset$ for $i = 1,2$, and $\calD(D) \cap X_3 = \emptyset$, the sets $D \cup D_1$ and $D \cup D_2$ are minimal by Lemma~\ref{lem. D(v) not in intersection -> union is minimal}. 
\end{proof}  

The following result is a converse to some previous results when $\hh(T) = 3$.

\begin{theorem}
\label{thm. h(T) = 3 and no P4 and |N(s) cup V2| = 2 -> unmimxed}
Let $T$ be a balanced tree of height 3.
Suppose that  
\begin{enumerate}
    \item for all $u \in V_2$, $|N(u) \cap V_1| = 1$, and
    \item for all $s \in V_1$,  $|N(s) \cap V_2| \leq 1$.
\end{enumerate}
Then $T$ is WTD.
\end{theorem}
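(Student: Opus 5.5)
The plan is to use Corollary~\ref{cor. WTD iff RDBD WTD}: show that every minimal BDS of $T$ has the same size and every minimal RDS of $T$ has the same size. Following the standing assumption, color even-height vertices blue and odd-height vertices red. Since $\hh(T)=3$, we have $V_\B = V_0 \cup V_2$ and $V_\R = V_1 \cup V_3$.

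First I record the local structure. By Lemma~\ref{lem. adjacent, then height differ by 1}, heights of adjacent vertices differ by exactly one. Hence:
\begin{itemize}
\item a vertex of $V_3$ has all its neighbors in $V_2$, since there is no $V_4$;
\item a vertex $s \in V_1$ has neighbors only in $V_0 \cup V_2$;
\item a vertex $u \in V_2$ has neighbors only in $V_1 \cup V_3$.
\end{itemize}
By hypothesis (1), each $u \in V_2$ has a unique neighbor $\sigma(u) \in V_1$. By hypothesis (2), the map $\sigma: V_2 \to V_1$ is injective. Equivalently, each support vertex $s$ has at most one neighbor in $V_2$; call it $\sigma^{-1}(s)$ when it exists.

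\emph{Blue-dominating sets.} Let $D \subseteq V_\R$ be a minimal BDS. Each leaf must be dominated, and its only neighbor is its support vertex, so $V_1 \subseteq D$. On the other hand, $N(V_1) \supseteq V_0$ trivially, and $N(V_1) \supseteq V_2$ because every height-$2$ vertex has a neighbor of height $1$. So $V_1$ is itself a BDS. By minimality, $D = V_1$. Hence every minimal BDS has size $|V_1|$.

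\emph{Red-dominating sets.} Let $D \subseteq V_0 \cup V_2$ be a minimal RDS, and set $A := D \cap V_2$. I claim that for each $s \in V_1$ exactly one of the following holds:
\begin{enumerate}
\item[(a)] $s = \sigma(u)$ for some $u \in A$, and then no leaf adjacent to $s$ lies in $D$;
\item[(b)] otherwise, exactly one leaf adjacent to $s$ lies in $D$.
\end{enumerate}
For (a): a leaf $\ell \in D$ adjacent to $s$ has $N(\ell) = \{s\}$, and $N(s) \cap D \supseteq \{\ell, u\}$. So $\ell$ has no private neighbor, which contradicts Lemma~\ref{lem. minimalEquiv}. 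For (b): $s$ must be dominated, and its neighbors in $V_2$ (at most $\sigma^{-1}(s)$, which is not in $A$) are unavailable. So some leaf of $s$ lies in $D$, and Lemma~\ref{lem. singleLeaf in a min TD set} (whose proof applies verbatim to minimal RDSs) gives at most one.

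Counting via the injectivity of $\sigma$: case (a) occurs for exactly $|A|$ support vertices, and each contributes one vertex of $A$. The remaining $|V_1| - |A|$ support vertices each contribute exactly one leaf. Therefore
\[
|D| = |A| + (|V_1| - |A|) = |V_1|,
\]
independent of $D$. Combining the two counts, Corollary~\ref{cor. WTD iff RDBD WTD} gives that $T$ is WTD.

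The only delicate step is the leaf-exclusion in case (a). It relies on the private-neighbor characterization and on the injectivity of $\sigma$ from hypothesis (2), which is exactly what prevents two elements of $A$ from sharing a support vertex and breaking the count. Note that domination of $V_3$ never enters the count; it only constrains which sets $A$ can occur.
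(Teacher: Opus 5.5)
Your proof is correct and is essentially the paper's argument: $V_1$ is the unique minimal BDS, and every minimal RDS $D$ meets each $N(s)$, $s \in V_1$, in exactly one vertex, so summing over the disjoint union $V_0 \cup V_2 = \bigcup_{s \in V_1} N(s)$ gives $|D| = |V_1|$. The differences are cosmetic. You handle several leaves at one support vertex directly via Lemma~\ref{lem. singleLeaf in a min TD set} instead of first normalizing with Lemma~\ref{lem. LeafAdding}. You also use only the injectivity of $\sigma$ rather than the full perfect matching $|V_1| = |V_2|$.
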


\begin{proof}
First, we show some structural properties of $T$.
By Lemma~\ref{lem. LeafAdding}, the WTD-ness of $T$ is independent of the numbers of leaves attached to the support vertices.
Hence we assume that every support vertex is adjacent to exactly one leaf, which implies that $|V_0| = |V_1|$.
So, for each $s \in V_1$, denote the unique leaf adjacent to $s$ by $\ell(s)$, and for each leaf $\ell \in V_0$, denote the unique support vertex adjacent to $\ell$ by $s(\ell)$.

Next, since $T$ is connected and balanced with $\hh(T) = 3$, every support vertex must be adjacent to at least one height-2 vertex because there is a path between every support vertex and height-3 vertex.
Hence the inequality in (2) becomes an equality: for all $s \in V_1$, $|N(s) \cap V_2| = 1$.

Now we claim that $|V_1| = |V_2|$.
To show the claim, assume by way of contradiction that $|V_2| \neq |V_1|$.
First, suppose that $|V_2| > |V_1|$.
By (1) and the pigeonhole principle, there exists a support vertex $c \in V_1$ that is adjacent to at least two height-2 vertices.
This contradicts~(2).
Now suppose that $|V_2| < |V_1|$.
Then by (2) and the pigeonhole principle, there exists a vertex $c \in V_2$ that is adjacent to at least two support vertices, contradicting (1).
Therefore, we get $|V_2| = |V_1|$.

The hypotheses (1)--(2) and the condition $|V_2| = |V_1|$ implies that the set of edges 
$$
E(V_1,V_2) := \Set{uv \in E}{u \in V_1, v \in V_2}
$$
forms a bijection (perfect matching) between $V_1$ and $V_2$.
Hence for each $v \in V_2$, let $s(v)$ be the unique support vertex adjacent to $v$; i.e., $s(v)v \in E(V_1,V_2)$.

To show that $T$ is WTD, we use Corollary~\ref{cor. WTD iff RDBD WTD}.
First, we show that every minimal BDS of $T$ has the same size.
More specifically, we show that $V_1$ is the only minimal BDS of $T$.
Let $D$ be a minimal BDS of $T$.
Then $N(D) = V_0 \cup V_2$.
Since $V_0 \subset N(D)$ and the support vertices are the only vertices that are adjacent to the leaves, we have $V_1 \subseteq D$.
But, we also have $N(V_1) = V_0 \cap V_2$.
By mininmality of $D$, we have $V_1 = D$.
Thus $V_1$ is a minimal BDS contained in any BDS, showing that $V_1$ is the only minimal BDS in $T$.

Next, we show that every minimal RDS has the same size.
First, notice that the set $V_2$ is a minimal RDS because it dominates $V_1 \cup V_3$ and each $s \in V_1$ is dominated by a unique $x \in V_2$.
Let $D$ be a minimal RDS.
We show that $|D| = |V_2|$.
Since there are no vertices $c \in D \subseteq V_0 \cup V_2$ with $|N(c) \cap V_1| \geq 2$, we have $|D| \geq |V_1|$. 
Next, we show that $|D| \leq |V_1|$.
Let $s \in V_1$.
Then there are vertices $l \in V_0$ and $u \in V_2$ such that $N(s) = \set{l,u}$ by the assumptions on $T$.
Since $N(l) = \set{s} \subseteq N(u)$, the minimality of $D$ implies that $|D \cap N(s)| \leq 1$.
Since $D \subseteq V_0 \cup V_2 = \bigcup_{t \in V_1}N(t)$ where the union is disjoint, we get
$$
|D| = \left|\bigcup_{t \in V_1} (D \cap N(t))\right| = \sum_{ t \in V_1} |D \cap N(t)| \leq \sum_{ t \in V_1} 1 = |V_1|\ .
$$
This gives $|D| = |V_1|$ hence every minimal RDS of $T$ has the same size.
\end{proof}

Now we present our descriptive characterization of WTD balanced trees.

\begin{theorem}
\label{thm. char. of WTD delt. trees}
Let $T$ be a balanced tree.
Then $T$ is WTD if and only if
\begin{enumerate}
    \item $\hei(T) \leq 3$,
    \item for all $v \in V_2$, $|N(v) \cap V_1| = 1$, and
    \item for all $v \in V_1$, $|N(v) \cap V_2| \leq 1$.
\end{enumerate}
\end{theorem}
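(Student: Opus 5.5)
The plan is to prove the two directions separately, using the results of this section.

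For the forward direction, suppose $T$ is WTD. By the standing assumption, the leaves (even height) are blue, so Theorem~\ref{thm. height >= 4 is not WTD} applies. If $\hh(T)\ge 4$, then $T$ would not be WTD; hence (1) holds.
\begin{itemize}
\item For (2): if some $v\in V_2$ had $|N(v)\cap V_1|\neq 1$, then by Lemma~\ref{lem. no 4-path iff |N(v) cap V_1| = 1} there would be two leaves at distance $4$. Theorem~\ref{thm. P5 -> Mix} would then say $T$ is not WTD, a contradiction.
\item For (3): if some $s\in V_1$ had $|N(s)\cap V_2|\ge 2$, then Theorem~\ref{thm. |N(s) cap V_2| > 1 -> not WTD} would give two minimal RDSs of different sizes. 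By Corollary~\ref{cor. WTD iff RDBD WTD}, $T$ would not be WTD.
\end{itemize}
So all three conditions are necessary, and this direction is just collecting earlier results.

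For the converse, assume (1)--(3) and split on $\hh(T)\in\{0,1,2,3\}$.
\begin{itemize}
\item \emph{Height $3$.} This is exactly Theorem~\ref{thm. h(T) = 3 and no P4 and |N(s) cup V2| = 2 -> unmimxed}.
\item \emph{Height $2$.} This cannot occur under (2). The proof of Theorem~\ref{thm. height(T)=2 -> not WTD} shows that a vertex $v\in V_2$ of maximal height $2$ has all neighbours in $V_1$ and has degree at least $2$, so $|N(v)\cap V_1|\ge 2$.
\item \emph{Height $0$ or $1$.} Here I argue directly.
\begin{itemize}
\item Height $0$: every vertex is a leaf or isolated. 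A tree in which every vertex is a leaf is $K_2$, which is not balanced since its two leaves are adjacent and both have height $0$. So this case is the trivial single vertex, which is handled by convention (or excluded, since graphs are assumed to have no isolated vertices).
\item Height $1$: every non-leaf vertex has height $1$, and no two height-$1$ vertices are adjacent. Connectivity then forces a single non-leaf center, so $T=K_{1,n}$ with $n\ge 2$. Every minimal TDS of a star consists of the center together with exactly one leaf. One way to see this: by Lemma~\ref{lem. LeafAdding} we may reduce to $P_3$, whose minimal TDSs all have size $2$. Hence a star is WTD.
\end{itemize}
\end{itemize}

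The main obstacle is bookkeeping rather than new mathematics. The one point needing care is the low-height degenerate cases: one must confirm that a balanced tree of height $\le 1$ is a star (or trivial), and that height $2$ is excluded by (2) rather than needing a separate WTD argument. The substantive work has already been done in the preceding theorems.
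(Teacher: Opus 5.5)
Your proposal is correct and follows the paper's proof essentially step for step. Necessity is collected from Theorems~\ref{thm. height >= 4 is not WTD}, \ref{thm. P5 -> Mix} (with Lemma~\ref{lem. no 4-path iff |N(v) cap V_1| = 1}) and~\ref{thm. |N(s) cap V_2| > 1 -> not WTD}; sufficiency rules out height $2$ via (2), treats heights $0$ and $1$ directly (the single vertex is vacuously WTD, and a height-$1$ balanced tree is a star), and cites Theorem~\ref{thm. h(T) = 3 and no P4 and |N(s) cup V2| = 2 -> unmimxed} for height $3$. One cosmetic slip: in the paper's convention $P_n$ has vertex set $[n]_0$, so the three-vertex path you reduce the star to is $P_2$, not $P_3$.
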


\begin{proof}
($\Rightarrow$) Suppose that $T$ is WTD.
Then Theorem~\ref{thm. height >= 4 is not WTD} gives (1), Theorem~\ref{thm. P5 -> Mix} and Lemma~\ref{lem. no 4-path iff |N(v) cap V_1| = 1} give (2), and Theorem~\ref{thm. |N(s) cap V_2| > 1 -> not WTD} and  gives (3).

($\Leftarrow$) Suppose that (1), (2), and (3) hold.
By (2), we have $\hh(T) \neq 2$ as height 2 vertices cannot be leaves.
Hence by (1), we have $\hh(T) = 0,1$, or $3$.
If $\hh(T) = 0$, then $T$ is a graph with a single vertex, which is vacuously WTD. 
If $\hh(T) = 1$, then it is straightforward to show that $T$ is a graph with a single support vertex $s$ with at least 2 leaves attached (such graphs are called star graphs, see Figure~\ref{fig. star graph no label}). 
In this case, every minimal TDS of $T$ is of the form $\set{s,l}$ where $l$ is a leaf, hence $T$ is WTD.
\begin{figure}[ht]
    \centering
    \includegraphics[scale = 1]{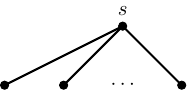}
    \caption{Balanced tree of height 1 (star graph)}
    \label{fig. star graph no label}
\end{figure}
If $\hei(T) = 3$, then by Theorem~\ref{thm. h(T) = 3 and no P4 and |N(s) cup V2| = 2 -> unmimxed}, $T$ is WTD.
\end{proof}

Theorem~\ref{thm. char. of WTD delt. trees} and the proof of Theorem~\ref{thm. h(T) = 3 and no P4 and |N(s) cup V2| = 2 -> unmimxed} show that if $T$ is an WTD balanced tree with $\hh(T) = 3$, then the inequality in (3) becomes an equality.

\section{Descriptive characterization of arbitrary trees via interior graphs}\label{sec. interior graphs of trees}

In this section, we define the interior graphs of a tree $T$ and prove that $T$ is WTD if and only if its interior graphs are WTD; see Corollary~\ref{cor. T WTD <-> TBTR WTD}.
We combine this result with Theorem~\ref{thm. char. of WTD delt. trees} to obtain our descriptive characterization of WTD trees in Theorem~\ref{thm. char. of WTD trees}.

\begin{assumptions}
    In this section, we let $T$ be a tree with a 2-coloring $\chi:V(T) \to \set{\R,\B}$.
\end{assumptions}

\begin{definition}\label{def. interior graphs}
    We define the \emph{blue interior graph} of $T$ to be the subgraph $T_\B$ of $T$ induced by the set 
    $$
    V \setminus N[V_1 \cap \chi^{-1}(\B)]
    $$
    where $N[U] := N(U) \cup U$ is the \emph{closed neighborhood of $U$} for any $U \subseteq V$; i.e., $T_B$ is the subgraph of $T$ induced by deleting all blue support vertices together with their neighbors.
    Similarly, we define $T_\R$ to be the \emph{red interior graph} of $T$ induced by the set
    $$
    V \setminus N[V_1 \cap \chi^{-1}(\R)].
    $$
\end{definition}

\begin{example}\label{ex. decomp. of a tree}
Figure \ref{fig. 2-colored T} shows a 2-colored tree $T$, and Figure~\ref{fig. T_B T_R of T} below shows its blue and red interior graphs. This example shows that the interior graphs can be forests.
\begin{figure}[ht]
        \centering
        \includegraphics{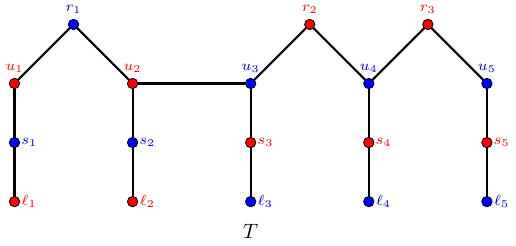}
        \caption{A 2-coloring of a tree $T$}
        \label{fig. 2-colored T}
\end{figure}
\begin{figure}[ht]
        \centering
        \includegraphics{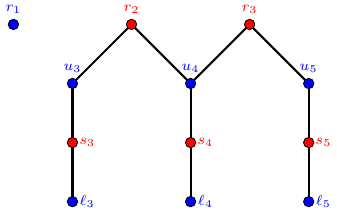}
        \qquad
        \vline
        \qquad
        \includegraphics{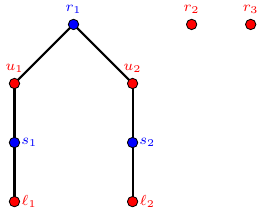}
        \caption{Interior graphs $T_\B$ (left) and $T_\R$ (right) of $T$ from Figure \ref{fig. 2-colored T}}
        \label{fig. T_B T_R of T}
\end{figure}    
\end{example}

\begin{example}\label{ex. T, T_B = T_R}
    Consider the tree $T$ in Figure \ref{fig. tree_with_all_h}.
    Its interior graphs $T_\B$ and $T_\R$ are shown in Figure  \ref{fig. T_B T_R of tree with all h}. 
    For this specific tree $T$, its interior graphs are isomorphic.

    \begin{figure}[ht]
        \centering
        \includegraphics{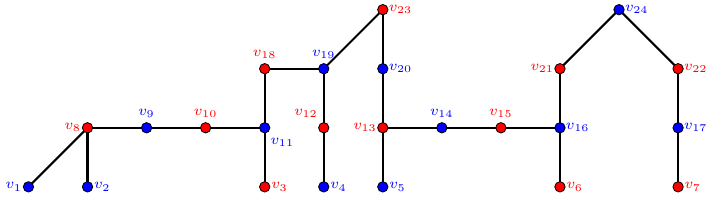}
        \caption{A tree $T$}
        \label{fig. tree_with_all_h}
    \end{figure}

    \begin{figure}[ht]
        \centering
        \includegraphics[width = 0.449\textwidth]{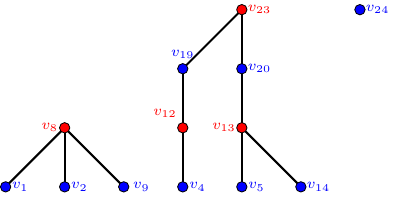}
        \quad
        \vline
        \quad
        \includegraphics[width = 0.449\textwidth]{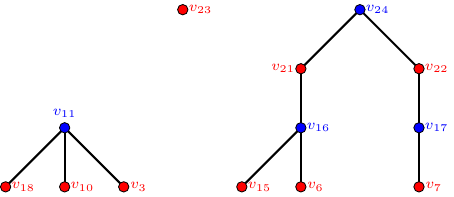}
        \caption{$T_\B$ (left) and $T_\R$ (right) of $T$ from Figure \ref{fig. tree_with_all_h}}
        \label{fig. T_B T_R of tree with all h}
    \end{figure}
\end{example}

Each connected component of the interior graphs in Examples  \ref{ex. T, T_B = T_R} and \ref{ex. decomp. of a tree} is a balanced tree.
This is true in general and is proved in the following lemma.

\begin{lemma}\label{lem. interior is delta forest}
    Every connected component of $T_\B$ and $T_\R$ is a balanced tree. 
    Hence we say that $T_\B$ and $T_\R$ are balanced forests.
\end{lemma}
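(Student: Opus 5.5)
The plan is to reduce to criterion (3) of Proposition~\ref{prop. equiv. delt. tree}: a tree is balanced if and only if all of its leaves have the same color. By the symmetry between $\R$ and $\B$, I only treat $T_\B$. Fix a connected component $C$ of $T_\B$. Since $C$ is a connected induced subgraph of the tree $T$, it is itself a tree. The restriction of $\chi$ to $V(C)$ is still a proper 2-coloring of $C$, so Proposition~\ref{prop. equiv. delt. tree} applies to $C$ with this coloring. Heights in $C$ are of course computed with respect to the leaves of $C$. If $C$ is a single vertex, it has no edges and is trivially balanced. So I assume $|V(C)|\geq 2$, and I will show that every leaf of $C$ is blue.

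Let $w$ be a leaf of $C$. There are two cases.

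\emph{Case 1: $w$ is also a leaf of $T$.} Let $s$ be its unique neighbor in $T$. Then $s$ is a support vertex of $T$ adjacent to $w$. If $s$ were blue, then $w \in N[V_1\cap\chi^{-1}(\B)]$, so $w$ would have been deleted. Hence $s$ is red, and $w$ is blue.

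\emph{Case 2: $\deg_T(w)\geq 2$.} Then $w$ has a neighbor $u$ in $T$ with $u\notin V(T_\B)$, since otherwise $u$ would be a second neighbor of $w$ in $C$. Thus $u\in N[V_1\cap\chi^{-1}(\B)]$.
\begin{itemize}
\item If $u$ itself were a blue support vertex, then $w\in N(u)$ would also have been deleted, which is a contradiction.
\item Hence $u\in N(s)$ for some blue support vertex $s$. This forces $u$ to be red, and therefore its neighbor $w$ is blue.
\end{itemize}

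In either case every leaf of $C$ is blue. By $(3\Rightarrow 1)$ of Proposition~\ref{prop. equiv. delt. tree}, $C$ is balanced. The argument for $T_\R$ is identical with the colors swapped, and there every leaf of a component turns out to be red.

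The only delicate point I expect is a bookkeeping one: the notions of leaf, height and balancedness must be taken inside the component $C$, not inside $T$. Once that is fixed, the key observation is Case 2. A deleted neighbor of a surviving vertex cannot be a blue support vertex itself, because that would delete the survivor too. It must therefore be a red neighbor of a blue support, and this pins down the survivor's color as blue.
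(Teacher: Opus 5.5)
Your proof is correct and follows the paper's route: you show that every leaf of each component of $T_\B$ is blue and then invoke $(3\Rightarrow1)$ of Proposition~\ref{prop. equiv. delt. tree}, and your two-case analysis spells out what the paper compresses into a single sentence. The one caveat, which the paper shares, is that Case~1 tacitly uses $s\in V_1$; this fails exactly when $T\cong P_1$, where $V_1=\emptyset$ and $T_\B=T$ is not balanced, so that degenerate tree must be excluded or handled through the paper's ``delete all blue support vertices and their neighbors'' reading of the definition.
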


\begin{proof}
    Without loss of generality, consider the blue interior graph $T_\B$.
    By Definition~\ref{def. interior graphs}, we delete all blue support vertices with their neighboring vertices which are red when we construct $T_\B$.
    Hence the leaves and the isolated vertices in $T_\B$ must be blue, implying that the connected components of $T_\B$ must be balanced trees by Proposition~\ref{prop. equiv. delt. tree}.
\end{proof}

The following results demonstrate the importance of studying the WTD-ness of the interior graphs of $T$ to understand the WTD-ness of $T$.

\begin{lemma}\label{lem. RDSs In TR <-> RDSs In T }
    Let $D \subseteq V(T)$. 
    Then $D$ is a minimal BDS in $T$ if and only if 
    $$
    D = (V_1(T) \cap V_\R(T)) \cup D'
    $$ 
    where $D'$ is a minimal BDS in $T_\R$.
    Similarly, $D$ is a minimal RDS of $T$ if and only if 
    $$
    D = (V_1(T) \cap V_\B(T)) \cup D'
    $$ 
    where $D'$ is a minimal RDS in $T_\B$.
\end{lemma}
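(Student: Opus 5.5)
The plan is to prove the statement for minimal RDSs; the BDS statement then follows by swapping the roles of $\R$ and $\B$. Throughout, set $S_\B := V_1(T) \cap V_\B(T)$, the set of blue support vertices.

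\textbf{Step 1: partition the vertices.} I first record how $V(T)$ splits relative to $T_\B$.
\begin{itemize}
\item Every neighbor of a vertex in $S_\B$ is red, so the blue vertices of $N[S_\B]$ are exactly $S_\B$.
\item Hence every blue vertex not in $S_\B$ lies in $V(T_\B)$.
\item The red vertices of $T$ split disjointly into $N(S_\B)$ and $V_\R(T_\B)$.
\item Every leaf adjacent to a vertex of $S_\B$ is red, and its only neighbor lies in $S_\B$.
\item Since $T_\B$ is an induced subgraph, $N_{T_\B}(x) = N_T(x) \cap V(T_\B)$ for $x \in V(T_\B)$.
\item A red vertex $u \in V(T_\B)$ has no neighbor in $S_\B$; otherwise $u \in N(S_\B)$ would have been deleted.
\end{itemize}

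\textbf{Step 2: reduce the RDS condition to $T_\B$.} Let $D$ be an RDS of $T$. Then $D \subseteq V_\B$, and since the red leaves must be dominated, $S_\B \subseteq D$.

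Put $D' := D \setminus S_\B$. By Step 1, $D' \subseteq V_\B(T_\B)$. Moreover $N(S_\B)$ covers exactly the red vertices in $N(S_\B)$. Red vertices of $T_\B$ can only be dominated in $T$ by blue vertices outside $S_\B$, and such dominating edges lie inside $T_\B$.

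It follows that $N_T(D) = V_\R(T)$ if and only if $N_{T_\B}(D') = V_\R(T_\B)$. Conversely, for any RDS $D'$ of $T_\B$, the set $S_\B \cup D'$ is an RDS of $T$ by the same computation. So RDSs of $T$ correspond exactly to sets of the form $S_\B \cup D'$ with $D'$ an RDS of $T_\B$.

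\textbf{Step 3: match minimality.} I will use Lemma~\ref{lem. minimalEquiv} (equivalently Corollary~\ref{cor. dom selector <-> minimal}).

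For $v \in S_\B$, a red leaf $\ell$ adjacent to $v$ always satisfies $N(\ell) \cap D = \{v\}$. So the vertices of $S_\B$ never obstruct minimality.

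For $v \in D'$, suppose $u$ is a private neighbor in $T$, meaning $N_T(u) \cap D = \{v\}$.
\begin{itemize}
\item Then $u \notin N(S_\B)$. Otherwise $u$ would also be adjacent to some $s \in S_\B \subseteq D$ with $s \neq v$.
\item Hence $u \in V_\R(T_\B)$, and $N_T(u) \cap D = N_{T_\B}(u) \cap D'$ by Step 1.
\end{itemize}
Conversely, a private neighbor of $v$ relative to $D'$ in $T_\B$ is a red vertex of $T_\B$. It has no neighbor in $S_\B$, so it is also a private neighbor of $v$ relative to $D$ in $T$. Therefore $D$ is minimal in $T$ if and only if $D'$ is minimal in $T_\B$, and both directions of the lemma follow.

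\textbf{Main obstacle.} I expect the main difficulty to be the bookkeeping in Step 1. The claims that need care are that every blue vertex outside $S_\B$ lies in $T_\B$, and that red vertices of $T_\B$ have no neighbors in $S_\B$. Everything else is a direct application of Lemma~\ref{lem. minimalEquiv}.
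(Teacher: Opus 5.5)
Your proof is correct, and it differs from the paper's mainly in how minimality is transferred between $T$ and the interior graph.

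\textbf{The paper's route.} The paper works with BDSs and $T_\R$, and handles the two directions asymmetrically.
\begin{enumerate}
\item For the ``if'' direction, it takes a domination selector of $D'$ in $T_\R$, together with the selector $s\mapsto \ell_s$ on $V_1(T)\cap V_\R(T)$, where $\ell_s$ is a leaf adjacent to $s$. It observes that the overlap $N_T(D')\cap N_T(V_1(T)\cap V_\R(T))$ contains no leaves and no vertices of $T_\R$. It then invokes Lemma~\ref{lem. D(v) not in intersection -> union is minimal}.
\item For the ``only if'' direction, it argues by contradiction. If $D\setminus (V_1(T)\cap V_\R(T))$ were not minimal in $T_\R$, a proper minimal sub-BDS $D''$ would exist. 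By the first direction, $(V_1(T)\cap V_\R(T))\cup D''$ would then be a minimal BDS properly contained in $D$.
\end{enumerate}

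\textbf{Your route.} You prove a two-sided private-neighbor correspondence directly from Lemma~\ref{lem. minimalEquiv}. For $v\in D'$, a vertex $u$ is a private neighbor of $v$ with respect to $D$ in $T$ if and only if it is one with respect to $D'$ in $T_\B$. This holds because such a $u$ is red, lies in $T_\B$, and has all of its $T$-neighbors in $T_\B$.

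\textbf{Comparison.} Your argument yields a slightly stronger and more symmetric statement: $D\mapsto D\setminus S_\B$ is a bijection from RDSs of $T$ to RDSs of $T_\B$ that preserves minimality in both directions. It needs neither the contradiction step nor the existence of minimal subsets. The paper's version instead reuses the selector-union machinery it relies on for its constructions in Section~\ref{sec. des. char. of WTD bal. trees}.

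\textbf{One point to make explicit.} In the converse part of your Step 2, you should state that an RDS $D'$ of the forest $T_\B$ is automatically blue. By your Step 1, every red vertex $u$ of $T_\B$ satisfies $N_{T_\B}(u)=N_T(u)\neq\emptyset$. So such a $u$ cannot lie in $D'$ without putting blue vertices into $N_{T_\B}(D')$. This is what justifies ``by the same computation'' there.
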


\begin{proof}
It suffices to prove the case for BDSs.
Write $V_{1,\R} := (V_1(T) \cap V_\R(T))$.

For one implication, we assume that $D'$ is a minimal BDS in $T_\R$ and show that $D := V_{1,\R} \cup D'$ is a minimal BDS in $T$.
The set of blue vertices of $T$ that are not in $T_\R$ is $N_T(V_{1,\R})$ by Definition~\ref{def. interior graphs}, so $D$ is a BDS in $T$.
Let $\mathcal{D}_\R':D' \to N_{T_\R}(D')$ be a domination selector of $D'$ in $T_\R$, and let $\mathcal{D}'':V_{1,\R} \to N_T(V_{1,\R})$ be the function defined by
$$
\mathcal{D}''(s) = l_s
$$
for some choice of $l_s \in V_0(T) \cap N_T(s)$.
Since $T_\R$ is an induced subgraph, one can show that the function $\mathcal{D'}: D' \to N_T(D')$ given by
$$
\mathcal{D'}(x) = \mathcal{D}'_\R(x)
$$
is also a domination selector of $D'$ in $T$.
Next, note that we have
$$
N_T(D') \cap N_T(V_{1,\R}) \subseteq N_T(V_{1,\R})\setminus V_0(T).
$$
Since $\mathcal{D}'(D') = \mathcal{D}'_\R(D') \subseteq V(T_\R)$, we get $\mathcal{D}'(D') \cap (N_T(V_{1,\R})\setminus V_0(T)) = \emptyset$ by construction of $T_\R$.
On the other hand, since $\mathcal{D}''(V_{1,\R}) \subseteq V_0(T)$, we get $\mathcal{D}''(V_{1,\R}) \cap (N_T(V_{1,\R})\setminus V_0(T)) = \emptyset$.
Therefore, $D$ is minimal by Lemma~\ref{lem. D(v) not in intersection -> union is minimal}.

For the converse, suppose that $D$ is a minimal BDS of $T$.
Then $V_{1,\R} \subseteq D$ since all blue leaves in $D$ must be dominated.
Set $D' := D \setminus V_{1,\R}$.
We show that $D'$ is a minimal BDS of $T_\R$.
Indeed we have $N_{T_\R}(D') = V_\B(T_\R)$ since $N_T(V_{1,\R}) \cap V(T_\R) = \emptyset$ by Definition~\ref{def. interior graphs}, and $N_T(D) = V_\B(T)$.
If $D'$ is not minimal in $T_\R$, then there exists a minimal BDS $D'' \subsetneq D'$ in $T_\R$.
But by the previous implication, $V_{1,\R} \cup D'' \subsetneq D$ becomes a minimal BDS in $T$, contradicting the minimality of $D$.
Thus $D'$ must be a minimal BDS in $T_{\R}$.
\end{proof}

Here is a corollary of Lemma~\ref{lem. RDSs In TR <-> RDSs In T }.

\begin{corollary}\label{cor. T WTD <-> TR BD-WTD and TB RD-WTD}
Every minimal RDS in $T$ has the same size if and only if every minimal RDS in $T_\B$ has the same size.
Similarly, every minimal BDS in $T$ has the same size if and only if every minimal BDS in $T_\R$ has the same size.    
\end{corollary}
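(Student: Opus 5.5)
The plan is to read the corollary off Lemma~\ref{lem. RDSs In TR <-> RDSs In T } by setting up a size-preserving bijection. By symmetry of the two colors, it suffices to treat the RDS statement; the BDS statement is identical with $\R$ and $\B$ interchanged. Write $V_{1,\B} := V_1(T) \cap V_\B(T)$, and define
$$
\Phi: \set{\text{minimal RDSs of } T_\B} \to \set{\text{minimal RDSs of } T}, \qquad \Phi(D') = V_{1,\B} \cup D'.
$$
Lemma~\ref{lem. RDSs In TR <-> RDSs In T } says exactly that $\Phi$ is well defined and surjective: every minimal RDS of $T$ has the form $V_{1,\B} \cup D'$ for some minimal RDS $D'$ of $T_\B$.

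The next step is to show that the union is disjoint, so that
$$
|\Phi(D')| = |V_{1,\B}| + |D'|.
$$
By Definition~\ref{def. interior graphs}, $T_\B$ is obtained by deleting $N[V_{1,\B}]$, which contains $V_{1,\B}$. Hence $D' \subseteq V(T_\B)$ is disjoint from $V_{1,\B}$. The same observation gives injectivity: if $V_{1,\B} \cup D'_1 = V_{1,\B} \cup D'_2$, then intersecting with $V(T_\B)$ yields $D'_1 = D'_2$.

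With the bijection in hand, minimal RDS sizes in $T$ are exactly the minimal RDS sizes in $T_\B$ shifted by the constant $|V_{1,\B}|$. So all minimal RDSs of $T$ have one size if and only if all minimal RDSs of $T_\B$ do.

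One degenerate case needs checking: $T_\B$ may be empty or have isolated vertices. An isolated vertex of $T_\B$ cannot be red-dominated. In that case $T_\B$ has no RDS at all, and by the lemma $T$ has no minimal RDS either, so both conditions hold vacuously. The bijection argument covers this case as well, since both sets of minimal RDSs are empty. I expect no real obstacle beyond verifying this disjointness and the vacuous case.
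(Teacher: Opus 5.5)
Your argument is correct and is exactly how the paper obtains this corollary from the preceding lemma: the minimal RDSs of $T$ are precisely the sets $V_{1,\B}\cup D'$, a disjoint union, with $D'$ a minimal RDS of $T_\B$, so the sizes occurring in $T$ are those occurring in $T_\B$ shifted by the constant $|V_{1,\B}|$. Your final paragraph is unnecessary and its premise is mistaken. When $T$ has at least two vertices, an isolated vertex of $T_\B$ is necessarily blue: a red vertex is adjacent only to blue vertices, and the only deleted blue vertices are those in $V_{1,\B}$, so a red vertex that lost all its neighbours would itself lie in $N(V_{1,\B})$ and would have been deleted. Hence such a vertex imposes no red-domination requirement, and the ``no RDS'' situation you describe never arises; nothing in your bijection argument depends on it.
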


\begin{lemma}\label{lem. T WTD <-> T RD-WTD}
    Let $T$ be a balanced tree with blue leaves.
    Then $T$ is WTD if and only if every minimal RDS in $T$ has the same size. 
\end{lemma}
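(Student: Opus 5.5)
By Corollary~\ref{cor. WTD iff RDBD WTD}, $T$ is WTD if and only if every minimal RDS of $T$ has the same size and every minimal BDS of $T$ has the same size. The forward implication of the lemma is therefore immediate. For the reverse implication, it suffices to show that, in a balanced tree with blue leaves, every minimal BDS has the same size. I will show this by applying Corollary~\ref{cor. T WTD <-> TR BD-WTD and TB RD-WTD}, which reduces the question to the red interior graph $T_\R$.

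First I compute $T_\R$. By Proposition~\ref{prop. equiv. delt. tree} and the standing convention, the leaves are blue and the red vertices are exactly those of odd height. The red support vertices are then precisely $V_1(T) \cap V_\R(T) = V_1(T)$. By Definition~\ref{def. interior graphs}, $T_\R$ is obtained by deleting $N[V_1]$. Every leaf lies in $N(V_1)$, so $T_\R$ contains no vertex of height $0$. It also contains no vertex of height $1$, and no vertex of height $2$, because each height-$2$ vertex is adjacent to some height-$1$ vertex. Note that if $\hh(T) \le 1$, then $T$ is a single vertex or a star; in these cases $T_\R$ is empty and the claim is trivial (the unique minimal BDS is $V_1$).

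In general $T_\R$ consists of the vertices of height at least $3$. Its blue vertices are those of even height at least $4$. Corollary~\ref{cor. T WTD <-> TR BD-WTD and TB RD-WTD} says that all minimal BDSs of $T$ have the same size if and only if all minimal BDSs of $T_\R$ have the same size. It remains to check the latter. The main obstacle is that $T_\R$ need not be small: when $\hh(T) \ge 4$ it is a nontrivial forest, and I do not see directly why its minimal BDSs would be equicardinal.

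I plan to handle this with a case split on $\hh(T)$.

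If $\hh(T) \le 3$, then $T_\R$ has no blue vertices. Its vertex set is contained in $V_3$, and no two of these vertices are adjacent, so $T_\R$ consists of isolated red vertices. The only BDS of $T_\R$ is then $\emptyset$ (every subset $S$ of red vertices has $N(S) = \emptyset = V_\B(T_\R)$, and only $\emptyset$ is minimal). So all minimal BDSs of $T_\R$ have size $0$, and we are done.

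If $\hh(T) \ge 4$, then Theorem~\ref{thm. height >= 4 is not WTD} shows that $T$ has two minimal RDSs of different sizes. In this case the hypothesis of the reverse direction (that all minimal RDSs of $T$ have the same size) fails, so the implication holds vacuously. Combining the two cases with Corollary~\ref{cor. WTD iff RDBD WTD} proves the lemma.
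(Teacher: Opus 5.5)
Your proof is correct, and its skeleton matches the paper's. Corollary~\ref{cor. WTD iff RDBD WTD} reduces the reverse direction to showing that all minimal BDSs have the same size. Theorem~\ref{thm. height >= 4 is not WTD} makes the case $\hh(T)\ge 4$ vacuous, exactly as in the paper.

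You differ in the remaining case $\hh(T)\le 3$. The paper argues directly: a minimal BDS $D$ must contain $V_1$ to dominate the blue leaves. Since $V_\B=V_0\cup V_2$ and every vertex there has a neighbor in $V_1$, we get $N(V_1)=V_\B$, so minimality forces $D=V_1$.

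You instead invoke Corollary~\ref{cor. T WTD <-> TR BD-WTD and TB RD-WTD}. First you identify $T_\R$ as the subforest induced by the vertices of height at least $3$. This is correct: all height-$1$ vertices are red, every leaf and every height-$2$ vertex is adjacent to one, and no vertex of height $\ge 3$ is. So for $\hh(T)\le 3$, $T_\R$ is an edgeless all-red graph whose only minimal BDS is $\emptyset$.

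This gives the same conclusion, that $V_1$ is the unique minimal BDS, but at higher cost. The interior-graph lemma behind that corollary already contains the paper's argument (its step ``$V_{1,\R}\subseteq D$''). It also includes a domination-selector verification you do not need here, whereas the paper's short argument is self-contained. What your route buys is an explicit description of $T_\R$ for balanced trees with blue leaves.

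One wording slip: when $V_\B(T_\R)=\emptyset$, every subset of $V(T_\R)$ is a BDS of $T_\R$. Only $\emptyset$ is a \emph{minimal} one, which is what your parenthetical correctly says and what you need.
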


\begin{proof}
    The forward implication is by Corollary~\ref{cor. WTD iff RDBD WTD}.
    So, suppose that every minimal RDS of $T$ has the same size.
    Again by Corollary~\ref{cor. WTD iff RDBD WTD}, it suffices to show that every BDS of $T$ has the same size; in particular, we show that $T$ has a unique minimal BDS.
    Let $D \subseteq V$ be a minimal BDS.
    Since the leaves of $T$ are blue, $V_1 \subseteq D$.
    Also, as every RDS of $T$ has the same size, we get $\hh(T) \leq 3$ by Theorem~\ref{thm. height >= 4 is not WTD}. 
    Thus we have $V_\B = V_0 \cup V_2$.
    Thus $N(V_1) = V_\B$.
    By the minimality of $D$, we get $D = V_1$.
\end{proof}

Corollaries~\ref{cor. WTD iff RDBD WTD} and \ref{cor. T WTD <-> TR BD-WTD and TB RD-WTD} combine with Lemma~\ref{lem. T WTD <-> T RD-WTD} to establish our next result.

\begin{corollary}\label{cor. T WTD <-> TBTR WTD}
    Let $T$ be a tree with a 2-coloring.
    Then $T$ is WTD if and only if its interior graphs $T_\B$ and $T_\R$ are WTD.
\end{corollary}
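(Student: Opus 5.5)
The plan is to chain the equivalences already established. By Corollary~\ref{cor. WTD iff RDBD WTD}, $T$ is WTD if and only if every minimal RDS of $T$ has the same size and every minimal BDS of $T$ has the same size. By Corollary~\ref{cor. T WTD <-> TR BD-WTD and TB RD-WTD}, this holds if and only if every minimal RDS of $T_\B$ has the same size and every minimal BDS of $T_\R$ has the same size. It therefore remains to show two things: $T_\B$ is WTD if and only if all its minimal RDSs have the same size, and $T_\R$ is WTD if and only if all its minimal BDSs have the same size.

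For $T_\B$, I would decompose it into its connected components $C_1,\dots,C_r$, which are balanced trees with blue leaves and blue isolated vertices by Lemma~\ref{lem. interior is delta forest}. Minimal RDSs of a disjoint union are exactly unions of minimal RDSs of the components, since the neighborhoods of distinct components are disjoint (Corollary~\ref{cor. neighbor disjoint min. sets  -> union is min.}, and conversely the restriction to a component is minimal). Hence all minimal RDSs of $T_\B$ have the same size if and only if this holds in each component. The same decomposition applies to minimal TDSs, so $T_\B$ is WTD if and only if each $C_i$ is WTD. Lemma~\ref{lem. T WTD <-> T RD-WTD} then applies componentwise: a balanced tree with blue leaves is WTD exactly when its minimal RDSs all have the same size. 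The case for $T_\R$ is symmetric after swapping the colors, since its components have red leaves and Lemma~\ref{lem. T WTD <-> T RD-WTD} holds with colors interchanged.

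The main obstacle is the degenerate components. An isolated vertex of $T_\B$ has no TDS at all and no RDS, so the conventions must be fixed. I would treat an isolated vertex $v$ as imposing no condition. Since $v$ is blue, its red neighborhood is empty, so the empty set is its unique minimal RDS. I would check that Lemma~\ref{lem. T WTD <-> T RD-WTD} is harmless here, as the paper already calls the single-vertex tree vacuously WTD. A further check is that when $T_\B$ or $T_\R$ is empty, the statement is read vacuously. With these conventions fixed, the chain of equivalences gives the corollary.
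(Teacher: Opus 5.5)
Your proposal is correct and is the paper's own argument: the paper proves this corollary by combining Corollaries~\ref{cor. WTD iff RDBD WTD} and \ref{cor. T WTD <-> TR BD-WTD and TB RD-WTD} with Lemma~\ref{lem. T WTD <-> T RD-WTD}. Your componentwise reduction, with an isolated vertex imposing no condition, spells out the reading of WTD-ness for the forests $T_\B$ and $T_\R$ that the paper leaves implicit, as in Theorem~\ref{thm. char. of WTD trees}. The only slip is your remark that an isolated blue vertex has no RDS: under the paper's definition $\emptyset$ is one, and in fact its unique minimal one, as you note in the very next sentence.
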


We conclude this section with our descriptive characterization of WTD trees.
It follows directly from Corollary~\ref{cor. T WTD <-> TBTR WTD} and Theorem~\ref{thm. char. of WTD delt. trees}.

\begin{theorem}\label{thm. char. of WTD trees}
    A tree $T$ is WTD if and only if every connected component $T'$ of $T_\B$ and $T_\R$ has
    \begin{enumerate}
        \item $\hh(T') \leq 3$,
        \item for all $v \in V_2(T')$, we have $|N_{T'}(v) \cap V_1(T')| = 1$, and
        \item for all $v \in V_1(T')$, we have $|N_{T'}(v) \cap V_2(T')| \leq 1$.
    \end{enumerate}
\end{theorem}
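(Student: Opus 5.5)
The plan is to combine Corollary~\ref{cor. T WTD <-> TBTR WTD} with Theorem~\ref{thm. char. of WTD delt. trees}, passing through the components of the interior forests. By Corollary~\ref{cor. T WTD <-> TBTR WTD}, $T$ is WTD if and only if both $T_\B$ and $T_\R$ are WTD. So the remaining task is to show that a balanced forest is WTD if and only if each of its connected components is WTD. Then I apply Theorem~\ref{thm. char. of WTD delt. trees} to each component, which is a balanced tree by Lemma~\ref{lem. interior is delta forest}.

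For the component step I argue directly. Let $F = T_1 \sqcup \dots \sqcup T_r$ be a forest with no edges between components. Since neighborhoods live inside components, a set $D \subseteq V(F)$ is a minimal TDS of $F$ if and only if each $D \cap V(T_i)$ is a minimal TDS of $T_i$. Domination selectors restrict to components and glue back, and Corollary~\ref{cor. neighbor disjoint min. sets  -> union is min.} gives the gluing direction. Consequently the sizes of minimal TDSs of $F$ are exactly the sums $\sum_i |D_i|$ with each $D_i$ ranging independently over minimal TDSs of $T_i$. This set of sums is a single value if and only if each $T_i$ has a single value. For the forward direction, fix the $D_j$ for $j \neq i$ and vary $D_i$.

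One subtlety I expect to be the main obstacle is isolated vertices in $T_\B$ or $T_\R$, which have no TDS at all. The paper's convention treats a single-vertex balanced tree as vacuously WTD (height $0$, covered by Theorem~\ref{thm. char. of WTD delt. trees}). To keep the argument consistent, I would work throughout at the level of the color-restricted notions used in Corollary~\ref{cor. T WTD <-> TBTR WTD}. Concretely, ``$T_\B$ WTD'' should be read, via Corollary~\ref{cor. T WTD <-> TR BD-WTD and TB RD-WTD} and Lemma~\ref{lem. T WTD <-> T RD-WTD}, as ``all minimal RDSs of $T_\B$ have the same size.'' An isolated blue vertex needs nothing to dominate it in an RDS, so it contributes $0$ independently of everything else.

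With that reading, the additivity argument above goes through verbatim for RDSs (respectively BDSs), componentwise. For each component, Lemma~\ref{lem. T WTD <-> T RD-WTD} converts ``all minimal RDSs have the same size'' into WTD-ness of that balanced tree. Theorem~\ref{thm. char. of WTD delt. trees} then converts WTD-ness into conditions (1)--(3). The symmetric argument handles $T_\R$ with colors swapped.
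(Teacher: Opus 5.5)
Your proposal is correct and follows essentially the paper's route: the paper proves this theorem in one line, by combining Corollary~\ref{cor. T WTD <-> TBTR WTD} with Theorem~\ref{thm. char. of WTD delt. trees}. What you add is the forest-to-component reduction that the paper leaves implicit, and reading WTD-ness of $T_\B$ and $T_\R$ as ``all minimal RDSs (respectively BDSs) have the same size'' makes that reduction valid, since isolated vertices then contribute nothing and the componentwise additivity argument goes through.
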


\section{Constructing WTD balanced trees}\label{sec. const. WTD bal. trees}

The main result of this section, Theorem~\ref{thm. constructing hh = 3 delt. trees}, gives a constructive characterization of WTD balanced trees.

\begin{assumptions}
    Set $\mathbb{N}_0 := \mathbb{N} \cup \set{0}$. 
    For $n \in \mathbb{N}_0$, define $[n] := \set{1,2,\dots,n}$ and $[n]_0 := \set{0,1,\dots,n}$.
    For every $n \in \mathbb{N}_0$, we denote $P_n$ to be the path graph with $V(P_n) = [n]_0$ and $E(P_n) = \set{(0,1),(1,2),\dots,(n-1,n)}$.
\end{assumptions}

We apply the following operation to balanced trees below.

\begin{definition}
    For two graphs $G_1 = (V',E')$ and $G_2 = (V'',E'')$ with vertices $v' \in V'$ and $v'' \in V''$, we set $e:= v'v''$ and define the \emph{edge join} of $G_1$ and $G_2$ by $e$ to be the graph 
    $$
    G_1 +_e G_2 := (V'\cup V'', E' \cup E'' \cup \set{v'v''}).
    $$
    We define an operation $\mathcal{O}$ as follows:
    Let $T = (V,E)$ be a tree, and let $v \in \bigcup_{i = 1}^3V_i$.
    
    \begin{enumerate}
        \item If $\hh(v) = 1$, then 
        $$
        \mathcal{O}(T,v) := T +_{(v,0)} P_0.
        $$  
        \item  If $\hh(v) = 2$, then
        $$
        \mathcal{O}(T,v) := T +_{(v,3)} P_3.
        $$
        \item  If $\hh(v) = 3$, then
        $$
        \mathcal{O}(T,v) := T +_{(v,2)} P_2.
        $$
    \end{enumerate}
    In other words, the operator $\mathcal{O}$ adds a ``whisker" of length 1, 4, or 3 to $T$, depending on $\hh(v)$. 
    In particular $\mathcal{O}(T,v)$ is a tree, so for $v_1 \in \bigcup_{i = 1}^3V_i$ after appropriate relabeling so that $V(\mathcal{O}(T,v_1)) \cap \mathbb{N}_0 = \emptyset$, let $v_2 \in \bigcup_{i = 1}^3V_i(\mathcal{O}(T,v_1))$, and set
    $$
    \mathcal{O}(T,(v_1,v_2)) := \mathcal{O}(\mathcal{O}(T,v_1),v_2).
    $$
    Inductively, given $n \in \mathbb{N}$ and $v_j \in \bigcup_{i = 1}^3V_i\left(\mathcal{O}(T,(v_1,\dots,v_{j-1}))\right)$ for $j \in [n]$, we set
    $$
    \mathcal{O}(T,(v_1,\dots,v_n)) := \mathcal{O}(\mathcal{O}(T,v_1,\dots,v_{n-1}),v_n).
    $$
\end{definition}

\begin{example}
    Consider the following graph $T$ in Figure~\ref{fig: P_6} that is isomorphic to $P_6$.
    \begin{figure}[ht]
        \centering
        \includegraphics{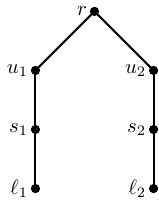}
        \caption{Graph $T \cong P_6$}
        \label{fig: P_6}
    \end{figure}
The graphs $\mathcal{O}(T,u_2)$, and $\mathcal{O}(T,r)$ are given in Figure~\ref{fig: P_6 O_2 O_3}.
\begin{figure}[ht]
    \centering
    \includegraphics{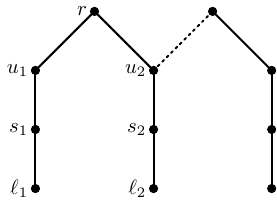}
    \qquad
    \qquad
    \includegraphics{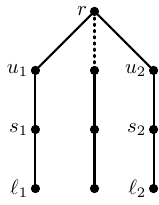}
    \caption{Graphs $\mathcal{O}(T,u_2)$ and $\mathcal{O}(T,r)$.}
    \label{fig: P_6 O_2 O_3}
\end{figure}
\end{example}

\begin{fact}\label{fact. O123 preserve WTD-ness}
    Let $T$ be a balanced tree of height 3.
    Let $v \in V\setminus V_0$.
    Then $T$ is WTD if and only if $\mathcal{O}(T,v)$ is WTD, by Theorem~\ref{thm. char. of WTD delt. trees}, since $\mathcal{O}$ preserves the height, and conditions (2) and (3) of Theorem~\ref{thm. char. of WTD delt. trees}
\end{fact}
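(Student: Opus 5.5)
The plan is to reduce everything to Theorem~\ref{thm. char. of WTD delt. trees}. Since $\hh(T)=3$, condition (1) of that theorem already holds for $T$. So it suffices to show three things about $T':=\mathcal{O}(T,v)$: it is again a balanced tree, it has height exactly $3$, and it satisfies conditions (2) and (3) of that theorem exactly when $T$ does. The key step is to compute heights in $T'$ and show that every old vertex keeps its height, while the new whisker vertices receive the predictable heights.

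First I will handle the whisker. Adding a path only creates new leaves and new paths to leaves, so $\hh_{T'}(x)\le \hh_T(x)$ for every $x\in V(T)$. The whisker's own vertices lie at heights $0,1,\dots$ counted from its free end, except where the path back through $v$ is shorter. I will treat the three cases in turn:
\begin{itemize}
\item If $\hh(v)=1$, the new leaf sits at distance $1$ from $v$. Old vertices gain no shorter leaf paths, since $v$ already had a leaf at distance $1$.
\item If $\hh(v)=3$, we attach $P_2$ at its endpoint $2$. The new vertices $2,1,0$ get heights $2,1,0$, and the new leaf is at distance $3$ from $v$, matching $\hh(v)$.
\item If $\hh(v)=2$, we attach $P_3$ at vertex $3$. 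Its heights are $3$ (via $0$, or via $v$: $1+2=3$), then $2,1,0$, and the new leaf is at distance $4>2$ from $v$.
\end{itemize}
In every case, for $x\in V(T)$ the new leaf $0$ satisfies $\dist_{T'}(x,0)=\dist_T(x,v)+\dist(v,0)\ge \dist_T(x,v)+\hh_T(v)\ge \hh_T(x)$, using the triangle inequality through a nearest leaf of $v$. Hence old heights are preserved and $\hh(T')=3$.

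Second, balancedness follows because adjacent vertices have heights differing by $1$. Inside the whisker this is clear from the listed heights, and the joining edge connects $v$ (height $h$) to a whisker vertex of height $h\pm1$: in the three cases these are $(1,0)$, $(2,3)$, and $(3,2)$.

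Third, I will check conditions (2) and (3) locally, since only $v$ and the whisker vertices have new neighbours.
\begin{itemize}
\item If $\hh(v)=1$: $v$ gains a height-$0$ neighbour, which does not affect (3).
\item If $\hh(v)=2$: $v$ gains a height-$3$ neighbour, so $|N(v)\cap V_1|$ is unchanged. The whisker's height-$2$ vertex has exactly one height-$1$ neighbour, and its height-$1$ vertex has exactly one height-$2$ neighbour.
\item If $\hh(v)=3$: $v$ is not in $V_1$ or $V_2$, and the whisker's height-$2$ and height-$1$ vertices are matched one-to-one in the same way.
\end{itemize}
So (2) and (3) hold in $T'$ if and only if they hold in $T$, and the equivalence follows from Theorem~\ref{thm. char. of WTD delt. trees}.

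The main obstacle I expect is the height-preservation argument for old vertices, particularly in the $\hh(v)=2$ case where the attached vertex is not the whisker's end. The inequality above handles it cleanly once it is phrased via $\dist(v,0)\ge\hh_T(v)$.
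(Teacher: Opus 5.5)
Your proposal is correct and follows the paper's own justification: since $\mathcal{O}(T,v)$ is again balanced of height $3$ and satisfies conditions (2) and (3) of Theorem~\ref{thm. char. of WTD delt. trees} exactly when $T$ does, the theorem gives the equivalence. Your case analysis also fills in what the paper leaves implicit: old vertices keep their heights because $\dist(v,0)\ge \hh_T(v)$, and $\mathcal{O}(T,v)$ is balanced.
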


The proof of Theorem~\ref{thm. char. of WTD delt. trees} shows that $P_0$ is the unique height-0 WTD balanced tree, and every height-1 WTD balanced tree is of the form $\mathcal{O}(P_2,(s,\dots,s))$ where $V_1(P_2) = \set{s}$.
Since Theorems~\ref{thm. height >= 4 is not WTD} and \ref{thm. height(T)=2 -> not WTD} show that an WTD balanced tree must have height 0,1, or 3, it remains to construct every height-3 WTD balanced tree.
We begin this process with the following lemma; the graph in Figure~\ref{Figure3} shows why $T$ needs to be WTD and balanced for the conclusion to hold.

\begin{lemma}\label{lem. subgraph V32 is a tree}
    Let $T = (V,E)$ be an WTD balanced tree of height 3.
    Let $T' = (V',E')$ be the subgraph of $T$ induced by the set $V_3 \cup V_2$.
    Then $T'$ is a tree, i.e., $T'$ is connected.
\end{lemma}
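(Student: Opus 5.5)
By Theorem~\ref{thm. char. of WTD delt. trees} and the remark after it, the WTD balanced tree $T$ of height $3$ satisfies three conditions. Every $v \in V_2$ has exactly one neighbor in $V_1$. Every $s \in V_1$ has exactly one neighbor in $V_2$. Since $\hh(T) = 3$, every vertex lies in $V_0\cup V_1\cup V_2\cup V_3$, and by Lemma~\ref{lem. adjacent, then height differ by 1} edges only join consecutive heights. The plan is to show that any path in $T$ between two vertices of $V_2 \cup V_3$ stays inside $V_2\cup V_3$. Since $T$ is connected, this gives connectivity of $T'$; $T'$ is acyclic as a subgraph of a tree, so $T'$ is a tree.

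Let $x,y \in V_2\cup V_3$ and $P = \pth(x,y) = (p_0,\dots,p_d)$ in $T$. Suppose $P$ contains a vertex of height at most $1$. Take a maximal subpath $(p_i,\dots,p_j)$ with $0<i\le j<d$ consisting of vertices of height $\le 1$. Then $p_{i-1}$ and $p_{j+1}$ have height $\ge 2$, so by Lemma~\ref{lem. adjacent, then height differ by 1} we get $\hh(p_{i-1})=\hh(p_{j+1})=2$ and $\hh(p_i)=\hh(p_j)=1$.

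Within the subpath, a leaf (height $0$) has degree $1$, so it cannot be an interior vertex of a path. Hence the subpath contains no height-$0$ vertex. Two height-$1$ vertices are not adjacent since $T$ is balanced. Therefore the subpath is a single vertex $s = p_i = p_j \in V_1$. But then $s$ is adjacent to two distinct vertices $p_{i-1}, p_{i+1} \in V_2$, contradicting $|N(s)\cap V_2| \le 1$ (condition (3) of Theorem~\ref{thm. char. of WTD delt. trees}). So $P \subseteq V_2\cup V_3$, and since $T'$ is induced, $P$ is a path in $T'$.

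The main (mild) obstacle is making sure condition (3) is correctly available. It comes from the forward direction of Theorem~\ref{thm. char. of WTD delt. trees}, which relies on Theorem~\ref{thm. |N(s) cap V_2| > 1 -> not WTD} applied with blue leaves, consistent with the standing color convention. Condition (2) is not even needed. The example in Figure~\ref{Figure3} illustrates exactly the failure of (3): there $s$ joins two height-$2$ vertices $v_1,v_2$, disconnecting $V_2\cup V_3$.
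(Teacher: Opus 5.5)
Your proof is correct, and it takes a slightly different route from the paper's. The paper prunes leaves twice. Deleting $V_0$ from $T$ gives the tree $T''$ induced by $V_1\cup V_2\cup V_3$. By condition (3) of Theorem~\ref{thm. char. of WTD delt. trees}, every vertex of $V_1$ is a leaf of $T''$, so deleting them yields $T'$, which is again a tree because removing a leaf from a tree leaves a tree.

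You instead take the unique $T$-path between two vertices of $V_2\cup V_3$ and show directly that it cannot dip into $V_0\cup V_1$. An interior vertex of a path is not a leaf, and height-$1$ vertices are pairwise nonadjacent. So any dip would be a single support vertex with two distinct height-$2$ neighbors, which violates (3).

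Both arguments rest on the same idea: vertices of $V_0\cup V_1$ are dead ends. Your version needs only the inequality $|N(s)\cap V_2|\le 1$ and no leaf-deletion fact. It also makes explicit a point the paper leaves implicit: each $s\in V_1$ has degree exactly $1$, not $0$, in $T''$, which follows from connectivity of $T''$ or from the remark after the theorem. The paper's version is shorter, given the standard fact about leaves.
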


\begin{proof}
    Recall the following property of trees: Let $G$ be a graph with a leaf $\ell$. Then $G$ is a tree if and only if $G\setminus \ell$ is a tree.
    Next, consider the subgraph $T''$ of $T$ induced by the set $V_1 \cup V_2 \cup V_3$.
    Then $T''$ is obtained by only deleting the leaves of $T$, hence $T''$ is a tree.
    By Theorem~\ref{thm. char. of WTD delt. trees}, the vertices in $V_1(T)$ must be leaves in $T''$.
    Since $T'$ can also be obtained by deleting the leaves of $T''$ which are the vertices in $V_1(T)$, $T'$ is also a tree.
\end{proof}

The following corollary is critical for the proof of Theorem~\ref{thm. constructing hh = 3 delt. trees}.

\begin{corollary}\label{cor. hh = 2 of upper deg 1 exists}
    Let $T$ be an WTD balanced tree of height 3.
    Then there exists $u \in V_2$ such that $|N(u) \cap V_3| = 1$.
\end{corollary}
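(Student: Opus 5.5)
We use the tree $T'$ induced by $V_3 \cup V_2$ from Lemma~\ref{lem. subgraph V32 is a tree}. Since $T$ is balanced of height $3$, Lemma~\ref{lem. adjacent, then height differ by 1} shows that every edge of $T'$ joins a vertex of $V_2$ to a vertex of $V_3$, so $T'$ is bipartite with parts $V_2$ and $V_3$. Moreover, $V_3$ is the top height, so every neighbor of a height-$3$ vertex has height $2$. Hence for $w \in V_3$ we have $\deg_{T'}(w) = \deg_T(w) \geq 2$, because $w$ is not a leaf of $T$. For $u \in V_2$ we have $\deg_{T'}(u) = |N(u) \cap V_3|$, so it suffices to find a vertex $u \in V_2$ that is a leaf of $T'$.

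Since $\hh(T) = 3$, the set $V_3$ is nonempty. Pick $w \in V_3$; then $w$ has at least two neighbors in $V_2$. This shows that $T'$ is a tree with at least two vertices, so it has at least two leaves. By the degree observation above, no vertex of $V_3$ is a leaf of $T'$. Therefore every leaf of $T'$ lies in $V_2$. Such a leaf $u$ has exactly one neighbor in $T'$, which means $|N(u) \cap V_3| = 1$.

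If one prefers a counting argument, the same conclusion follows from edge counts. $T'$ has exactly $|V_2| + |V_3| - 1$ edges, and this number is at least $2|V_3|$, so $|V_2| \geq |V_3| + 1$. If every $u \in V_2$ had $|N(u) \cap V_3| \geq 2$, the edge count would be at least $2|V_2| > |V_2| + |V_3| - 1$, a contradiction. Note also that $|N(u) \cap V_3| = 0$ cannot occur, since $T'$ is connected with at least two vertices.

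The only delicate point is where the WTD hypothesis enters. It is needed solely to invoke Lemma~\ref{lem. subgraph V32 is a tree}, which gives connectivity of $T'$. Without connectivity, a component of $T'$ could fail the argument, as the example in Figure~\ref{Figure3} suggests.
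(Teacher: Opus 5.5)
Your proof is correct, but your main argument takes a different route from the paper's. The paper works in two steps. First it uses condition~(2) of Theorem~\ref{thm. char. of WTD delt. trees} ($|N(u)\cap V_1|=1$, a consequence of WTD) to show every $u\in V_2$ has at least one neighbor in $V_3$. Then it rules out ``every $u\in V_2$ has at least two'' by counting edges of $T'$: Lemma~\ref{lem. heighestSmallest} gives $|E'|\geq 2|V_2|>|V_2|+|V_3|$, contradicting $|E'|=|V_2|+|V_3|-1$ from Lemma~\ref{lem. subgraph V32 is a tree}. Your leaf argument replaces both steps with one observation: each $w\in V_3$ keeps its full degree $\geq 2$ in $T'$, so every leaf of the nontrivial tree $T'$ lies in $V_2$. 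This bypasses Lemma~\ref{lem. heighestSmallest} and condition~(2), and it even produces at least two such vertices $u$. Your backup counting paragraph is essentially the paper's proof. The differences are that you derive $|V_2|>|V_3|$ inside $T'$, and you get the lower bound $1$ from connectivity rather than from condition~(2).

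Your closing remark, however, overstates the role of the hypothesis. $T'$ is an induced subgraph of a tree, hence a forest. Your leaf argument therefore runs verbatim in the component of $T'$ containing any fixed $w\in V_3$: that component contains $w$ and its at least two neighbors, and degrees there agree with degrees in $T'$. So neither connectivity nor WTD is needed, and the statement holds for every balanced tree of height $3$. Figure~\ref{Figure3} is consistent with this, since there $v_1$ has the unique $V_3$-neighbor $u_1$. What that figure shows is that the \emph{conclusion of Lemma~\ref{lem. subgraph V32 is a tree}} (connectivity of $T'$) can fail without WTD, not that the present corollary can. By contrast, the paper's first step genuinely uses WTD, because in a general balanced tree of height $3$ some $u\in V_2$ can have no neighbor in $V_3$.
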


\begin{proof}
    First, we show that for all $u \in V_2$, we have $|N(u) \cap V_3| \geq 1$.
    Assume by way of contradiction that there is a vertex $c \in V_2$ with $|N(c) \cap V_3| = 0$.
    Since $T$ balanced and $\hh(c) = 2$, we have $N(c) \subset V_1 \cup V_3$.
    But since $T$ is WTD, $|N(c) \cap V_1| = 1$ by Theorem~\ref{thm. char. of WTD delt. trees}.
    Thus $\deg(c) = 1$, so $c$ is a leaf, contradicting that $\hh(c) = 2$.

    Next, assume by way of contradiction that for all $u \in V_2$, we have $|N_T(u) \cap V_3| \geq 2$.
    Let $T' = (V',E')$ be the subgraph of $T$ induced by the set $V_2 \cap V_3$.
    Since $T$ is a balanced tree, there is no edge between vertices of same height.
    Hence every edge in $T'$ is incident to a vertex in $V_2(T)$ and a vertex in $V_3(T)$; hence $T'$ is a bipartite graph with partite sets $V_2(T)$ and $V_3(T)$.
    Thus we get 
    $$
    |E'| = \sum_{v \in V_2(T)} \deg_{T'}(v) = \sum_{v \in V_2(T)} |N_T(v) \cap V_3(T)|.
    $$
    Then by assumption, we get
    \begin{align*}
        |E'| &= \sum_{v \in V_2(T)} |N_T(v) \cap V_3(T)|\\
        &\geq \sum_{v \in V_2(T)} 2\\
        &= 2 \cdot |V_2(T)|\\
        &> |V_2(T)| + |V_3(T)|. \tag{Lemma~\ref{lem. heighestSmallest}}
    \end{align*}
    By Lemma~\ref{lem. subgraph V32 is a tree}, $T'$ must be a tree.
    Hence we have 
    $$
    |E'| = |V'| - 1 = |V_2(T)| + |V_3(T)| - 1
    $$ 
    contradicting the inequality $|E'| > |V_2(T)| + |V_3(T)|$.
\end{proof}

Now we are ready to give our constructive characterization of WTD balanced trees of height 3.

\begin{theorem}\label{thm. constructing hh = 3 delt. trees}
    Let $T$ be an WTD balanced tree of height 3.
    Then either $T \cong P_6$, or there exists a sequence of vertices $v_1,\dots,v_k$ in $V$ such that $T \cong \mathcal{O}(P_6,(v_1,\dots,v_k))$.
\end{theorem}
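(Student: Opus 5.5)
We argue by strong induction on $|V|$, peeling off one ``whisker'' at a time so that the remainder is again a WTD balanced tree of height 3. First we use Lemma~\ref{lem. LeafAdding} and Theorem~\ref{thm. char. of WTD delt. trees}. If some support vertex $s$ has at least two leaves, delete one leaf $\ell$. The resulting tree $T^-$ is still balanced with the same height function on the remaining vertices. It still satisfies conditions (1)--(3) of Theorem~\ref{thm. char. of WTD delt. trees}, so it is WTD, and $T \cong \mathcal{O}(T^-,s)$ since $\hh(s)=1$. So we may assume every support vertex has exactly one leaf. By the proof of Theorem~\ref{thm. h(T) = 3 and no P4 and |N(s) cup V2| = 2 -> unmimxed}, $V_2$ and $V_1$ are then perfectly matched, and each $u \in V_2$ carries a pendant path $u - s(u) - \ell(s(u))$.

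Next we handle the core $T'$ induced by $V_2 \cup V_3$, which is a tree by Lemma~\ref{lem. subgraph V32 is a tree}. By Corollary~\ref{cor. hh = 2 of upper deg 1 exists} there is $u \in V_2$ with $N(u) \cap V_3 = \set{w}$, so $u$ is a leaf of $T'$. Let $T^-$ be obtained by deleting $u$, $s(u)$ and $\ell(s(u))$. If $\deg_{T'}(w) \geq 3$, then $w$ still has at least two neighbors in $V_2$. In that case $T^-$ remains a tree whose leaves are all blue, so it is balanced. We must check that heights are unchanged. For vertices in the matched pendant structure, heights are determined locally: leaves have height 0, their supports height 1, and the $V_2$-vertices height 2. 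Vertex $w$ keeps height 3 because it still has a $V_2$-neighbor and no support neighbor. Conditions (2) and (3) are inherited, so $T^-$ is a WTD balanced tree of height 3 with $T \cong \mathcal{O}(T^-, w)$, where $\mathcal{O}$ at a height-3 vertex attaches exactly a $P_2$ whisker $u - s - \ell$.

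The main obstacle is the case $\deg_{T'}(w) = 2$, where removing the three vertices would turn $w$ into a leaf of $T^-$ and destroy balance. Here $N(w) = \set{u, u'}$ with $u' \in V_2$, and we instead remove the 4-path $w, u, s(u), \ell(s(u))$. This is the whisker added by $\mathcal{O}$ at the height-2 vertex $u'$, since $\mathcal{O}(\cdot,u')$ attaches $P_3$ by its endpoint $3$. For this we need $u'$ to remain of height 2 with at least one other neighbor so that $T^-$ has height 3. If $N(u') \cap V_3 = \set{w}$ as well, then $T'$ is just $u - w - u'$ and $T \cong P_6$, which is the base case. Otherwise $u'$ has another $V_3$-neighbor, the remaining tree has height 3, and the inherited conditions make it WTD. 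We still have to choose $u$ well, so that we never need $\deg_{T'}(w)=1$. That case is impossible anyway, because $w \in V_3$ is not a leaf and all its neighbors lie in $V_2$.

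Since $|V(T^-)| < |V|$, induction gives $T^- \cong P_6$ or $T^- \cong \mathcal{O}(P_6,(v_1,\dots,v_{k-1}))$, and appending the last vertex yields the claim. Finally, we check that the vertex at which we apply $\mathcal{O}$ has the required height in $T^-$, namely $1$, $3$ or $2$ respectively, as verified in each case above.
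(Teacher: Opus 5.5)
Your proof is correct and follows the same route as the paper's. You pick $u \in V_2$ with a unique $V_3$-neighbor $w$ via Corollary~\ref{cor. hh = 2 of upper deg 1 exists}, then strip either a $P_2$-whisker (if $\deg(w) > 2$) or a $P_3$-whisker at the other neighbor $u'$ (if $\deg(w) = 2$), ending at $P_6$. You are more careful than the paper in checking that the reduced tree is still balanced of height $3$, which Fact~\ref{fact. O123 preserve WTD-ness} presupposes, and in realizing extra leaves explicitly as $\mathcal{O}$ applied at height-$1$ vertices rather than just invoking Lemma~\ref{lem. LeafAdding}.
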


\begin{proof}
    We prove the claim by minimal counterexample.
    So assume that $T$ is an WTD balanced tree of height 3 that can not be obtained by applying $\mathcal{O}$ to any other WTD balanced tree of height 3.
    By Lemma~\ref{lem. LeafAdding}, we may assume that every support vertex of $T$ has exactly one leaf attach to it.
    By Corollary~\ref{cor. hh = 2 of upper deg 1 exists}, there exists some vertex $u \in V_2$ such that $|N(u) \cap V_3| = 1$.
    Let $r \in N(u) \cap V_3$.
    
    Suppose that $\deg(r) > 2$, and set $T'$ to be the subgraph of $T$ induced by $V\setminus \br_r(u)$. 
    By assumption, the subgraph of $T$ induced by the set $\br_r(u)$ is isomorphic to $P_2$, so we get $T \cong \mathcal{O}(T',r)$.
    But, $T'$ is also an WTD balanced tree of height 3 by Fact~\ref{fact. O123 preserve WTD-ness}, a contradiction.

    Now suppose that $\deg(r) = 2$.
    If $|V_3| > 1$, then set $N(r) =: \set{u,u'}$.
    Now let $T'$ be a subgraph of $T$ induced by $V\setminus \br_{u'}(r)$ (this time, the subgraph induced by $\br_{u'}(r)$ is isomorphic to $P_3$), we get $T \cong \mathcal{O}(T',u')$, a contradiction since $T'$ is an WTD balanced tree of height 3 by Fact~\ref{fact. O123 preserve WTD-ness}.
    If $|V_3| = 1$, then $T \cong P_6$ by the assumption made by using Lemma~\ref{lem. LeafAdding}.
\end{proof}

\begin{example}
    Consider the following tree in Figure~\ref{fig: const delt. tree}.
    \begin{figure}[ht]
        \centering
        \includegraphics{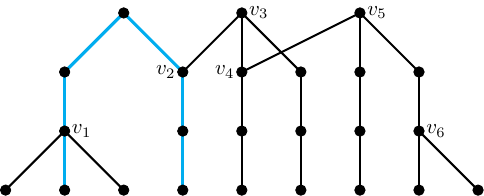}
        \caption{An WTD balanced tree of height 3, $T$.}
        \label{fig: const delt. tree}
    \end{figure}
    Noticing that the cyan edges form a $P_6$, one can see that $T \cong \mathcal{O}(P_6,(v_1,v_1,v_2,v_3,v_4,v_5,v_6))$.
    %Notice that other than the leaves in the sequence $(v_1,v_1,v_2,v_3,v_4,v_5,v_6)$, the inverse ordering of the vertices in the sequence gives you an order of finding height 2 vertices $u$ with $|N(u) \cap V_3| = 1$ applying Corollary~\ref{cor. hh = 2 of upper deg 1 exists}.
\end{example}

\printbibliography

\end{document}